\newtheorem{theorem}{Theorem}[section]
\newtheorem{corollary}[theorem]{Corollary} 
\newtheorem{lemma}[theorem]{Lemma}
\newtheorem{proposition}[theorem]{Proposition}
\newtheorem{problem}[theorem]{Problem}
\newtheorem{remark}[theorem]{Remark}
\theoremstyle{definition}
\newtheorem{example}[theorem]{Example}
\newtheorem{definition}[theorem]{Definition}
\newcommand{\tr}{\mathrm{tr}}
\newcommand{\CC}{{\mathbb{C}}}
\newcommand{\RR}{{\mathbb{R}}}
\newcommand{\ZZ}{{\mathbb{Z}}}
\newcommand{\Mcal}{{\mathcal{M}}}
\newcommand{\fX}{{\mathfrak{X}}}
\newcommand{\T}{^\mathsf{T}}
\newcommand{\SL}{\operatorname{SL}}
\newcommand{\GL}{\operatorname{GL}}
\newcommand{\pa}{\mathrm{pa}}
\newcommand{\ch}{\mathrm{ch}}
\newcommand{\PD}{{\rm PD}}
\newcommand{\prc}{{\rm prc}}
\newcommand{\Gcal}{{\mathcal G}}
\newcommand{\Ccal}{{\mathcal C}}
\newcommand{\mlt}{{\rm mlt}}
\newcommand{\id}{{\rm id}}
\newcommand{\rk}{{\rm rk}}
\newcommand*\circled[1]{\tikz[baseline=(char.base)]{\node[shape=circle,draw,inner sep=1pt] (char) {#1};}}
\newcommand*\squared[1]{\tikz[baseline=(char.base)]{\node[shape=rectangle,draw,inner sep=1.5pt] (char) {#1};}}
\newcommand*\triangled[1]{\tikz[baseline=(char.base)]{\node[shape=regular polygon,draw,regular polygon sides = 3,inner sep=-0.3pt] (char) {#1};}}
\newcommand*\pentagoned[1]{\tikz[baseline=(char.base)]{\node[shape=regular polygon,draw,regular polygon sides = 5,inner sep=-0.1pt] (char) {#1};}}
\title{Symmetries in directed Gaussian graphical models}
\author{Visu Makam, Philipp Reichenbach, and Anna Seigal}
\begin{document}

\maketitle

\begin{abstract}
We define Gaussian graphical models on directed acyclic graphs with coloured vertices and edges, calling them RDAG (restricted directed acyclic graph) models.
If two vertices or edges have the same colour, their parameters in the model must be the same. 
We present an algorithm to find the maximum likelihood estimate (MLE) in an RDAG model, and characterise when the MLE exists, via linear independence conditions. We relate properties of a graph, and its colouring, to the number of samples needed for the MLE to exist and to be unique. 
We also characterise when an RDAG model is equal to an associated undirected graphical model and study connections to groups and invariant theory. We provide examples and simulations to study the benefits of RDAGs over uncoloured DAGs.
\end{abstract}

\section*{Introduction}

The concept of a graph is widely used across the sciences~\cite{foulds2012graph}.
Graphs are a framework to relate entities: the vertices are the entities of interest, and the edges encode connections between them. A graph is given a statistical meaning in the study of graphical models~\cite{lauritzen1996graphical,maathuis2018handbook}. 
Each vertex represents a random variable, and the edges between variables reflect their statistical dependence~\cite{verma1990causal}.
In this paper, we study {\em directed} Gaussian graphical models, also called Gaussian Bayesian networks, or linear structural equation models with independent errors~\cite{Sullivant}. 
Such models have been applied to cell signalling~\cite{sachs2005causal}, gene interactions~\cite{friedman2000using}, causal inference~\cite{pearl2009causality}, and many other contexts.

We define graphical models on directed acyclic graphs (DAGs) with a colouring of their vertices and edges. Vertices or edges with the same colour must have the same parameter values. Thus, the graph colouring imposes symmetries in the model.
We call such models {\em RDAG models}, where the `R' stands for restricted, cf.~\cite{hojsgaard2008graphical}.

Our first motivation for RDAG models is that vertex and edge symmetries appear in various applications, such as in the study of longitudinal data~\cite{abbruzzo2016operational,vinciotti2016model}, or clustered variables~\cite{gao2015estimation,hojsgaard2008graphical}.
The coloured directed graph gives an intuitive pictorial description of the symmetry conditions in the model.

Our second motivation is to decrease the maximum likelihood threshold, the minimum sample size required for the maximum likelihood estimate (MLE) to exist almost surely, see~\cite{dempster1972covariance}. In applications, it is desirable for the MLE to exist when there is only a small number of samples; i.e., for the maximum likelihood threshold to be small.
Innovative ideas have been used to find maximum likelihood thresholds in graphical models~\cite{buhl1993existence,drton2019maximum,gross2018maximum,uhler2012geometry} and for estimating the MLE from too few samples~\cite{friedman2008sparse,wille2004sparse}. 
Removing edges from a graph can lower the threshold~\cite{uhler2012geometry, lauritzen1996graphical}, but there is a trade-off: removing edges imposes more conditional independence among the variables. This is why, instead, we aim to decrease the maximum likelihood threshold by introducing symmetries. 

We will use the following as our running example throughout the paper.

\begin{example}
\label{ex:very_first}
Consider the coloured graph \begin{tikzcd}[cramped, sep = small]
        {\color{blue}\circled{1}} & \squared{3} \ar[r, red] \ar[l, red] & {\color{blue}\circled{2}}
    \end{tikzcd}, with blue (circular) vertices $\{ 1, 2\}$, black (square) vertex $3$ and two red edges.
The RDAG model is
\[ y_1 = \lambda y_3 + \epsilon_1 , \qquad y_2 = \lambda y_3 + \epsilon_2, \qquad y_3 = \epsilon_3, \]
where $\epsilon_1, \epsilon_2 \sim N(0,\omega)$ and $\epsilon_3 \sim N(0,\omega')$, i.e. $\omega$ is the variance of blue vertices $1$ and $2$ and $\omega'$ is the variance of black vertex $3$. The third parameter $\lambda$ is the regression coefficient given by a red edge.
We will see that the MLE exists uniquely (almost surely) given one sample. 
For comparison, if we remove the colours the resulting model needs two samples for the MLE to exist.
We use this example to model the dependence of two daughters' heights on the height of their mother, and we compute the MLE given some sample data, in Section~\ref{sec:illustrative}.
\end{example}

As far as we are aware, RDAG models have not been defined before in the literature; we comment on some related models.
The assumption of equal variances from~\cite{peters2014identifiability} is the special case of an RDAG model, where all vertex colours are the same.
Special colourings encode exchangeability between variables, or invariance under a group of permutations.
A graphical model is combined with group symmetries in the directed setting in~\cite{madsen2000invariant} and in the undirected setting in~\cite{andersson1998symmetry,shah2012group}. 
RDAG models also relate to the fused graphical lasso~\cite{danaher2014joint}, which penalises differences between parameters on different edges, whereas in an RDAG model the parameters on edges of the same colour must be equal.

In this paper, we give a closed-form formula for the MLE in an RDAG model, as a collection of least squares estimators, see Algorithm~\ref{algorithm:the}.
We characterise the existence and uniqueness of the MLE via linear algebraic properties of the sample data, see Theorem~\ref{thm:MLestimationLinDependence}. 
We give upper and lower bounds on the threshold number of samples required for existence and uniqueness of the MLE in Theorem~\ref{thm:RDAGboundsMlt}. Our results show that RDAG thresholds are less or equal to the DAG thresholds, and that high symmetry decreases the thresholds.
Finally, we compare RDAG MLEs to uncoloured DAG MLEs via simulations in Section~\ref{sec:simulations}.
Our results hold with an assumption on the graph colouring, which we call {\em compatibility} (Defintion~\ref{dfn:compatibleColoring}). It is an open problem to extend our results to the non-compatible setting, as well as to directed graphs with~cycles. It is also an open problem to find the exact maximum likelihood thresholds, see Problem~\ref{prob:1}.

The undirected analogue to RDAG models are the RCON models from~\cite{hojsgaard2008graphical}.
Although a motivation for the graph colouring in RCON models is to lower the maximum likelihood threshold, there are relatively few graphs for which the threshold is known: colourings of the four cycle are studied in~\cite[\S 6]{uhler2012geometry},~\cite[\S 5]{sturmfels2010multivariate}, while an example with five vertices is~\cite[Example 3.2]{uhler2012geometry}.
In certain cases, RDAG models are equivalent to RCON models. We determine precisely the conditions under which this occurs in Theorem~\ref{thm:RCONequalsRDAG}. As a consequence, we obtain an entire class of RCON models where conditions for MLE existence and uniqueness can be found by appealing to our results on RDAGs.

This paper has two appendices, where we explain some connections to invariant theory. A Gaussian group model~\cite{amendola2020invariant} is parametrised by a group. 
In~\cite{amendola2020invariant}, the authors draw a dictionary between maximum likelihood estimation and stability notions in invariant theory. This dictionary allows for the transfer of tools from the algebraic subjects of representation theory and invariant theory to statistics: maximum likelihood thresholds were computed for matrix normal models in~\cite{derksen2021maximum} and for tensor normal models in~\cite{derksen2020maximum}.

We extend the dictionary between maximum likelihood estimation and stability notions to RDAGs in Theorem~\ref{thm:RDAGstabilityVsMLE}.
This requires us to extend the definitions of stability beyond the setting of a group action, see Definition~\ref{defn:stability_setE}.
While not evident in our final presentation, this perspective gave us the understanding needed to obtain many of the results in this paper and we would like to stress its importance for future work.  We have far more tools at our disposal when a model is backed by a group action, i.e., when it is a Gaussian group model. We identify RDAGs that are Gaussian group models in Proposition~\ref{prop:butterfly} and exhibit additional tools that one can use in such cases. 
The two appendices offer two alternative descriptions of the set of MLEs, see Propositions~\ref{prop:second-bijection} and~\ref{prop:StabiliserMLEsGroup}. 

\tableofcontents

\addtocontents{toc}{\protect\setcounter{tocdepth}{1}}

\section{Preliminaries}
\label{sec:preliminaries}

\subsection{Multivariate Gaussian models}
We consider $m$-dimensional Gaussian distributions with mean zero. 
Such a distribution is determined by its concentration (inverse covariance) matrix $\Psi$, a real $m \times m$ positive definite matrix. The
density function is
\begin{align*}
    f_\Psi(y) = \frac{1}{\sqrt{\det( 2 \pi \Psi^{-1})}}
    \exp \left( -\frac{1}{2} y\T \Psi y \right), \qquad y \in \RR^m. 
\end{align*}
We refer to a multivariate Gaussian model by the set of concentration matrices in the model. So, a model is a subset of $\PD_m$, the cone of $m \times m$ positive definite matrices. 

We study statistical models in $\PD_m$ via a set of invertible matrices. We define
\begin{equation}
    \label{eqn:ME}
    \Mcal_A := \{ a\T a \mid a \in A \} ,
\end{equation} 
where $A$ is a subset of $\GL_m$, the real invertible $m \times m$ matrices.
Many sets $A$ can correspond to the same model $\Mcal_A$. For instance, the full cone $\PD_m$ is $\Mcal_A$ whenever $A$ contains all invertible upper triangular matrices.  When the set $A$ is a group, the model $\Mcal_A$ is called a {\em Gaussian group model}~\cite{amendola2020invariant}.
    
\subsection{Maximum likelihood estimation}
    
    A maximum likelihood estimate (MLE) is a point in the model that maximizes the likelihood of observing some data. 
For $n$ samples from a Gaussian model $\mathcal{M} \subseteq \PD_m$, the data samples are the columns of a matrix $Y \in \RR^{m \times n}$. Assuming independent samples, 
the likelihood function is
$$ L_Y(\Psi) = \prod_{i=1}^n f_{\Psi}(Y_i),$$
where $Y_i$ is the $i$th column of $Y$. 
We work with the log-likelihood function $\log L_Y$, which has the same maximisers as $L_Y$. 
The log-likelihood function can be written, up to additive and multiplicative constants, as
\begin{equation}\label{eqn:gaussianlikelihood}
    \ell_{Y} (\Psi) = \log \det (\Psi) - \mathrm{tr} (\Psi S_Y),
\end{equation}
where $S_Y = \frac{1}{n} \sum_{i=1}^n Y_i Y_i\T$ is the sample covariance matrix.
        Four possibilities arise when maximising the log-likelihood:
\begin{enumerate}
    \item[(a)] $\ell_Y$ unbounded from above
    \item[(b)] $\ell_Y$ bounded from above
    \item[(c)] the MLE exists (i.e. $\ell_Y$ is bounded from above and attains its supremum)
    \item[(d)] the MLE exists and is unique.
\end{enumerate}
The minimal number of samples needed for the MLE to exist almost surely is the MLE existence threshold; the number of samples for the MLE to exist uniquely almost surely is the uniqueness threshold.

\begin{example}
    Let $\Mcal = \PD_m$. The unique maximiser of the likelihood is $\hat{\Psi} = S_Y^{-1}$, if $S_Y$ is invertible. If $S_Y$ is not invertible, the likelihood function is unbounded from above, see \cite[Proposition~ 5.3.7]{Sullivant}. The existence and uniqueness thresholds are therefore both $m$, since with $m$ samples the matrix $S_Y$ will almost surely be invertible.
\end{example}

For a model $\Mcal_A$ as in~\eqref{eqn:ME},
we can rewrite the log-likelihood~\eqref{eqn:gaussianlikelihood} at sample matrix $Y \in \RR^{m \times n}$ as a function of a matrix $a \in A$.
\begin{equation}
    \label{eqn:likelihood_e}
     \ell_Y(a\T a) = \log \det (a\T a) - \frac{1}{n} \Vert a \cdot Y \Vert^2.
\end{equation}

    \subsection{Directed Gaussian graphical models (DAG models)}
A directed acyclic graph (DAG) is $\Gcal = (I,E)$, where $I$ is a set of vertices, and $E$ a set of directed edges. 
We write $j \to i$ for an edge from $j$ to $i$; the absence of such an edge is denoted $j \not\to i$. The {\em parents} and {\em children} of $i$ in $\Gcal$ are, respectively, the vertex sets
$$ \pa(i) = \{ j \in I \mid (j \to i) \in E \} \qquad \ch(i) = \{ k \in I \mid (i \to k) \in E \}.$$
We often take the vertex set $I$ to be $[m] = \{ 1, 2, \ldots, m\}$. 

We call a directed Gaussian graphical model on $\Gcal$ a {\em DAG model}. A DAG model is defined by the linear structural equation
\begin{equation}
    \label{eqn:lsem}
    y = \Lambda y + \varepsilon, \qquad \text{i.e.} \qquad y_i = \sum_{j \in \pa(i)} \lambda_{ij} y_j + \varepsilon_i,
\end{equation} 
where $y \in \RR^m$, $\lambda_{ij}=0$ for $j \not\to i$ in $\mathcal{G}$, and $\epsilon \sim N(0,\Omega)$ with $\Omega$ diagonal.
The coefficient $\lambda_{ij}$ is a regression coefficient, the effect of parent $j$ on child~$i$.
The model encodes conditional independence: a node is independent of its non-descendants, after conditioning on its parents~\cite{verma1990causal}. 

\begin{remark}[The matrix $\Lambda$ is strictly upper triangular]
Throughout the paper, we choose an ordering on the vertices of $\Gcal$ so that $\Lambda$ is upper triangular.  That is, if edge $j \to i$ is in $E$ then $j > i$.
Such an ordering is possible because~$\Gcal$ is acyclic. Thinking of a vertex label as its age, the ordering makes parents older than children.
\end{remark}

Solving~\eqref{eqn:lsem} for $y$ gives $y = (\id - \Lambda)^{-1} \varepsilon$, 
where $\id$ denotes the $m \times m$ identity matrix, and the acyclicity of $\mathcal{G}$ ensures that $(\id - \Lambda)$ is invertible.
Hence $y$ is multivariate Gaussian with mean zero and concentration matrix
\begin{equation}
    \label{eq:graphmodel}
\Psi = (\id - \Lambda)\T \Omega^{-1} (\id-\Lambda).
\end{equation}
We define a set of matrices associated to the DAG $\Gcal$
\begin{equation}
\label{eq:AG}
 A(\mathcal{G}) = \{ a \in \GL_m \mid a_{ij}=0 \text{ for } i \neq j \text{ with } j \not \to i \text{ in }  \mathcal{G} \} .
 \end{equation}
 Recall from~\eqref{eqn:ME} the notation 
$\Mcal_{A(\Gcal)} = \{ a\T a : a \in A(\Gcal) \}$.
The set of concentration matrices of the form~\eqref{eq:graphmodel} is equal to the set $\Mcal_{A(\Gcal)}$. We prove this in Lemma~\ref{lem:DAG_MA}.

\subsection{Undirected Gaussian graphical models}

Multivariate Gaussian models can also be obtained from undirected graphs. An undirected graph $\Gcal = (I,E)$ is a set of vertices $I$ and \emph{undirected} edges $E$. The model is the set of distributions with mean zero and concentration matrix $\Psi$ with $\Psi_{ij} = 0$ whenever edge \begin{tikzcd}[cramped, sep=small]
i \ar[r, no head] & j
\end{tikzcd}
is not in $E$. That is, the variables at nodes $i$ and $j$ are independent after conditioning on all others, see ~\cite[Proposition~13.1.5]{Sullivant}.

\subsection{Restricted concentration (RCON) models}

In~\cite{hojsgaard2008graphical}, the authors introduce restricted concentration (RCON) models, which impose symmetries on the concentration matrix $\Psi$ according to a graph colouring.
A coloured undirected graph is a tuple $(\mathcal{G},c)$, where $\mathcal{G} = (I, E)$ is an undirected graph and the map
$$ c: I \cup E \rightarrow \mathcal{C}$$ 
assigns a colour to each vertex and to each edge. The vertex $i \in I$ has colour $c(i) \in \Ccal$, and edge
\begin{tikzcd}[cramped, sep=small]
i \ar[r, no head] & j
\end{tikzcd}
has colour $c(ij) \in \Ccal$.

\begin{definition}[{see~\cite[\S3]{hojsgaard2008graphical}}]
\label{def:rcon}
The {\em RCON model} on the coloured undirected graph $(\Gcal, c)$ consists of concentration matrices with
\begin{enumerate}
    \item $\Psi_{ij} = 0$ whenever \begin{tikzcd}[cramped, sep=small]
    i \ar[r, no head] & j
    \end{tikzcd} is not in $E$
    \item $\Psi_{ii} = \Psi_{jj}$ whenever $c(i) = c(j)$,
    \item $\Psi_{ij} = \Psi_{kl}$ whenever $c(ij) = c(kl)$.
\end{enumerate}
\end{definition}

\section{Introducing RDAG models}

\label{sec:RDAG_def}

A colouring of a DAG assigns colours to the vertices and edges. A coloured DAG is a tuple $(\mathcal{G},c)$, where $\mathcal{G} = (I, E)$ is a DAG on vertices $I$ and edges $E$, and
$$ c: I \cup E \rightarrow \mathcal{C}$$ 
is a colouring of the vertices and edges. Vertex $i \in I$ has colour $c(i) \in \Ccal$, and edge $j \to i$ has colour $c(ij) \in \Ccal$. We sometimes denote the vertex colour $c(i)$ by $c(ii)$, with no ambiguity because a DAG cannot have loops.

\begin{definition}\label{dfn:RDAGmodelViaLDL}
The {\em restricted DAG (RDAG) model} on the coloured DAG $(\Gcal,c)$ is
the set of concentration matrices $\Psi = (\id- \Lambda)\T \Omega^{-1} (\id - \Lambda)$,
where $\Lambda \in \RR^{m \times m}$ satisfies
\begin{enumerate}
\item $\lambda_{ij} = 0$ unless $j \to i$ in $\Gcal$
\item $\lambda_{ij} = \lambda_{kl}$ whenever edges $j \to i$ and $l \to k$ have the same colour
\end{enumerate}
and the diagonal matrix $\Omega \in \RR^{m \times m}$ has positive entries and satisfies
\begin{enumerate}
\item[(3)] $\omega_{ii} = \omega_{jj}$ if vertices $i$ and $j$ have the same colour.
\end{enumerate}
The model is given by the linear structural equation
$y = \Lambda y + \varepsilon$,
where $\varepsilon \sim N(0,\Omega)$.
\end{definition}

\begin{example}
\label{ex:RDAGminus1}
Consider the coloured graph \begin{tikzcd}[cramped, sep = small]
        {\color{blue}\circled{1}} & \squared{3} \ar[r, red] \ar[l, red] & {\color{blue}\circled{2}}
    \end{tikzcd} from Example~\ref{ex:very_first}.
The RDAG model 
is parametrised by matrices \[ 
\Lambda \in \left\lbrace \begin{pmatrix} 0 & 0 & \lambda \\ 0 & 0 & \lambda \\ 0 & 0 & 0 \end{pmatrix} : \lambda \in \RR \right\rbrace 
\qquad \text{and} \qquad 
\Omega
 \in \left\lbrace \begin{pmatrix} \omega & 0 & 0 \\ 0 & \omega & 0 \\ 0 & 0 & \omega' \end{pmatrix} : \omega, \omega' > 0 \right\rbrace. 
\] 
\end{example}

We will parametrise the RDAG model on $(\mathcal{G},c)$ via the set
 \begin{equation}
     \label{eqn:Agc}
     A(\mathcal{G},c) := \left\lbrace a \in \GL_m \bigg| \; 
     \begin{matrix} a_{ij}=0 \text{ for } i \neq j \text{ with } j \not \to i \text{ in }  \mathcal{G} \\ 
     a_{ij} = a_{kl} \text{ whenever }  c(ij) = c(kl) \end{matrix} \right\rbrace.
 \end{equation}
Note that $A(\Gcal, c)$ is contained in the set $A(\Gcal)$ from~\eqref{eq:AG}: the zero patterns of $A(\Gcal)$ and $A(\Gcal,c)$ are the same, and $A(\Gcal,c)$ has further equalities imposed by the colouring $c$.
Before we characterise which RDAG models can be parametrised by $A(\Gcal,c)$, we pause to motivate the use of this alternative parametrisation.

\begin{remark}[Motivation for the parametrisation via $A(\Gcal,c)$]
The RDAG model on $(\Gcal,c)$ will be the set $\Mcal_A$ as in~\eqref{eqn:ME}, where $A := A(\Gcal,c)$.
This point of view is motivated by connections to invariant theory for transitive DAG models in~ \cite[Section~5]{amendola2020invariant}. The alternative parametrisation has useful consequences. First, it leads to a condition on the graph colouring, called compatibility, which is indispensable in our results of Sections~\ref{sec:MLE_existence} and \ref{sec:thresholds}. Second, it is helpful when comparing directed and undirected models in Section~\ref{sec:RDAG_vs_RCON}. Finally, it enables us to generalise the connections to invariant theory from \cite{amendola2020invariant} to the setting of RDAGs, see Appendices~\ref{sec:appendix_stability} and~\ref{sec:connections_GGM}.
\end{remark}

\begin{example}
\label{ex:RDAG0}
Returning to the example 
\begin{tikzcd}[cramped, sep = small]
        {\color{blue}\circled{1}} & \squared{3} \ar[r, red] \ar[l, red] & {\color{blue}\circled{2}}
    \end{tikzcd}, we have
\[ A(\Gcal,c) = \left\lbrace \begin{pmatrix} d_1 & 0 & r \\ 0 & d_1 & r \\ 0 & 0 & d_2 \end{pmatrix} \colon d_1, d_2 \neq 0, \; r \in \RR \right\rbrace .\]
\end{example}

We now introduce a natural assumption on a colouring. 

\begin{definition}\label{dfn:compatibleColoring}
A colouring $c$ of a directed graph is \emph{compatible}, if:
    \begin{itemize}
        \item[(i)] Vertex and edge colours are disjoint; and
        \item[(ii)] If edges $j \to i$ and $l \to k$ have the same colour, then the child vertices $i$ and $k$ also have the same colour, i.e. $c(ij) = c(kl) \implies c(i) = c(k)$.
    \end{itemize}
    Note that compatibility does \emph{not} impose equality of parent colours $c(j)$ and $c(l)$. 
\end{definition}

\begin{remark}[Motivation for compatibility]
In an RDAG model, we do not impose equalities between $\Omega$ and $\Lambda$.
The entry $\omega_{ii}$ is a variance, while $\lambda_{kl}$ is a regression coefficient, so setting them to be equal would be difficult to interpret.
Hence the vertex and edge colours can always be thought of as disjoint, 
as in compatibility condition (i).
Compatibility condition (ii) has the statistical interpretation that the same regression coefficient appearing in an expression for two variables implies that their error variances agree.
This extra assumption is indispensable in many of our results and proofs.
It is a directed analogue to the condition appearing in~\cite[Proposition 1]{hojsgaard2008graphical}.
\end{remark}

The first use of compatibility condition (ii) is in relating an RDAG model on $(\Gcal,c)$ to the set $A(\Gcal,c)$. 
As in~\eqref{eqn:ME}, we consider the model
$$ \Mcal_{A(\Gcal,c)} = \big\lbrace a\T a \mid a \in A(\Gcal,c) \big\rbrace . $$

\begin{proposition}\label{prop:compatibleColouring}
Fix a coloured DAG $(\Gcal,c)$. The RDAG model on $(\Gcal,c)$ is equal to $\Mcal_{A(\Gcal,c)}$ if and only if the colouring $c$ is compatible.
\end{proposition}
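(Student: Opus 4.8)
The plan is to pass both the RDAG model and $\Mcal_{A(\Gcal,c)}$ into the coordinates furnished by the factorisation $\Psi = (\id-\Lambda)\T\Omega^{-1}(\id-\Lambda)$ (the $LDL\T$-type decomposition into a unit triangular matrix and a positive diagonal) and compare the resulting conditions. The bookkeeping is as follows. Since the vertices are ordered so that $j\to i$ only if $j>i$, every $a\in A(\Gcal)\supseteq A(\Gcal,c)$ is invertible and upper triangular with $a_{ij}=0$ whenever $i\neq j$ and $j\not\to i$; writing $\delta(a)$ for its (nonzero) diagonal part and $\Lambda(a):=\id-\delta(a)^{-1}a$, which is strictly upper triangular with $\Lambda(a)_{ij}=-a_{ij}/a_{ii}$, we get $a\T a=(\id-\Lambda(a))\T\,\delta(a)^2\,(\id-\Lambda(a))$, while conversely $a:=\Omega^{-1/2}(\id-\Lambda)$ realises a given decomposition. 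The standard fact I would invoke is that for $\Psi\in\PD_m$ this decomposition is \emph{unique} (from uniqueness of the Cholesky factor with positive diagonal, or a short triangularity argument); consequently $\Psi$ lies in the RDAG model if and only if its unique factors $\Lambda,\Omega$ satisfy conditions (1)--(3) of Definition~\ref{dfn:RDAGmodelViaLDL}.

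For the ``if'' direction I would prove the two inclusions directly. Given $a\in A(\Gcal,c)$, the factors $\Lambda(a)$ and $\Omega:=\delta(a)^{-2}$ satisfy (1) (same zero pattern), (3) (if $c(i)=c(j)$ then $a_{ii}=a_{jj}$, so $\omega_{ii}=\omega_{jj}$), and (2) (if $c(ij)=c(kl)$ then $a_{ij}=a_{kl}$, and compatibility~(ii) gives $c(i)=c(k)$, hence $a_{ii}=a_{kk}$ and $\lambda_{ij}=-a_{ij}/a_{ii}=-a_{kl}/a_{kk}=\lambda_{kl}$); so $a\T a$ lies in the RDAG model. Conversely, for an RDAG element with factors $(\Lambda,\Omega)$ the matrix $a:=\Omega^{-1/2}(\id-\Lambda)$ is invertible upper triangular with $a\T a=\Psi$ and $a_{ij}=-\omega_{ii}^{-1/2}\lambda_{ij}$; it has the right zero pattern, satisfies the vertex equalities via (3), satisfies the edge equalities because (2) and compatibility~(ii) together with (3) give $a_{ij}=-\omega_{ii}^{-1/2}\lambda_{ij}=-\omega_{kk}^{-1/2}\lambda_{kl}=a_{kl}$ whenever $c(ij)=c(kl)$, and has \emph{no} mixed vertex/edge equalities to check because compatibility~(i) makes the two colour sets disjoint; hence $a\in A(\Gcal,c)$ and $\Psi\in\Mcal_{A(\Gcal,c)}$.

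For the ``only if'' direction I would argue the contrapositive, splitting on which clause of compatibility fails. If (i) fails, choose a vertex $v$ and an edge $p\to q$ with $c(v)=c(qp)$: then $\id$ lies in the RDAG model (take $\Lambda=0$, $\Omega=\id$) but not in $\Mcal_{A(\Gcal,c)}$, since any $a\in A(\Gcal,c)$ with $a\T a=\id$ is orthogonal and upper triangular, hence diagonal with $\pm1$ entries, forcing the contradiction $\pm1=a_{vv}=a_{qp}=0$. If (i) holds but (ii) fails, choose edges $j\to i$, $l\to k$ with $c(ij)=c(kl)$ but $c(i)\neq c(k)$, and build $a\in A(\Gcal,c)$ by hand: put $1$ on every diagonal entry except $2$ on the colour class of $k$, put $1$ on every edge entry of colour $c(ij)$, and $0$ on every other off-diagonal entry (this respects all colour equalities because the two colour sets are disjoint, and $a$ is invertible upper triangular). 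Then $a\T a\in\Mcal_{A(\Gcal,c)}$, but its unique decomposition has $\lambda_{ij}=-a_{ij}/a_{ii}=-1\neq-\tfrac12=-a_{kl}/a_{kk}=\lambda_{kl}$, violating (2), so $a\T a$ is not in the RDAG model. In either case the two sets differ.

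I expect the main obstacle to be this last direction: ensuring that the candidate $\Psi$ genuinely fails to lie in the RDAG model. That is exactly where uniqueness of the factorisation is essential, since otherwise one could not rule out a different representation $(\Lambda',\Omega')$ of $\Psi$ satisfying (1)--(3). Everything else is routine triangular-matrix bookkeeping against the definition of $A(\Gcal,c)$.
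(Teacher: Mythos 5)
Your proof is correct and follows essentially the same route as the paper: both directions are handled by passing to the unique triangular factorisation $\Psi=(\id-\Lambda)\T\Omega^{-1}(\id-\Lambda)$ (equivalently the Cholesky factor with positive diagonal) and matching the colour conditions on $(\Lambda,\Omega)$ against those defining $A(\Gcal,c)$, with uniqueness of the factorisation doing the work in the converse. The only cosmetic difference is in the converse when clause (ii) of compatibility fails: the paper exhibits an element of the RDAG model that lies outside $\Mcal_{A(\Gcal,c)}$, while you exhibit an element of $\Mcal_{A(\Gcal,c)}$ that lies outside the RDAG model; both witnesses are valid since either one shows the two sets differ.
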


Before proving the proposition, we recall two matrix decompositions. The LDL decomposition writes a positive definite matrix as $\Psi = LDL\T$, where $D$ is diagonal with positive entries, and $L$ is lower triangular and unipotent (i.e. has ones on the diagonal). 
The LDL decomposition is closely related to the factorisation $\Psi = (\id - \Lambda)\T \Omega^{-1} (\id - \Lambda)$ from~\eqref{eq:graphmodel}: the LDL decomposition is $D = \Omega^{-1}$ and $L = (\id - \Lambda)\T$.
Hence an RDAG model imposes zeros and symmetries in the LDL decomposition.

The second matrix decomposition is the Cholesky decomposition. It writes a postive definite matrix as the product $\Psi = a\T a$, where $a$ is upper triangular with positive diagonal entries.
The model $\Mcal_{A(\Gcal,c)}$
imposes zeros and symmetries in the Cholesky decomposition, as follows.

\begin{lemma}
\label{lem:cholesky_MA}
Fix a coloured DAG $(\Gcal,c)$ with compatible colouring $c$. Then $\Mcal_{A(\Gcal,c)}$ is the set of matrices with Cholesky decomposition $a\T a$ for some $a \in A(\Gcal,c)$.
\end{lemma}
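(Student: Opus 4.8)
The plan is to prove the two inclusions of the claimed set equality separately, the only nontrivial one being that every $\Psi \in \Mcal_{A(\Gcal,c)}$ has its Cholesky factor lying in $A(\Gcal,c)$. One inclusion is immediate: if $\Psi$ has Cholesky decomposition $\Psi = a\T a$ with $a \in A(\Gcal,c)$, then by the definition of $\Mcal_{A(\Gcal,c)}$ in~\eqref{eqn:ME} we have $\Psi \in \Mcal_{A(\Gcal,c)}$, with no use of compatibility.

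For the reverse inclusion, I would start from $\Psi = a\T a$ with $a \in A(\Gcal,c)$ and note that, since the vertex ordering is chosen so that $\Lambda$ (hence the zero pattern defining $A(\Gcal)$, and therefore $A(\Gcal,c)$) is upper triangular, the matrix $a$ is upper triangular and invertible; thus its diagonal entries are nonzero but possibly negative, so $a$ fails to be a Cholesky factor only because of diagonal signs. I would then set $\epsilon := \mathrm{diag}\big(\mathrm{sgn}(a_{11}), \ldots, \mathrm{sgn}(a_{mm})\big)$ and $\tilde a := \epsilon a$. Since $\epsilon\T \epsilon = \id$, we get $\tilde a\T \tilde a = a\T \epsilon\T \epsilon a = a\T a = \Psi$, and $\tilde a$ is upper triangular with positive diagonal, so by uniqueness of the Cholesky decomposition of a positive definite matrix, $\tilde a$ is the Cholesky factor of $\Psi$.

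It then remains to check $\tilde a \in A(\Gcal,c)$. The zero pattern is unchanged, because $\tilde a_{ij} = \epsilon_{ii} a_{ij}$ differs from $a_{ij}$ only by a sign; so $\tilde a$ inherits the zero pattern of $A(\Gcal)$ from $a$. For the colour equalities, suppose $c(ij) = c(kl)$. Then $a_{ij} = a_{kl}$ since $a \in A(\Gcal,c)$, and \emph{compatibility condition (ii)} gives $c(i) = c(k)$; applying the vertex-colour equality built into $a \in A(\Gcal,c)$ (namely $c(i) = c(k) \Rightarrow a_{ii} = a_{kk}$, the case $i=j$, $k=l$ of~\eqref{eqn:Agc}) yields $\epsilon_{ii} = \epsilon_{kk}$, hence $\tilde a_{ij} = \epsilon_{ii} a_{ij} = \epsilon_{kk} a_{kl} = \tilde a_{kl}$. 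The same computation with $i = j$, $k = l$ handles the diagonal (vertex-colour) equalities, using that $a_{ii} = a_{kk}$ forces $|a_{ii}| = |a_{kk}|$. Hence $\tilde a \in A(\Gcal,c)$, and $\Psi$ lies in the set of matrices with Cholesky factor in $A(\Gcal,c)$.

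The main — indeed essentially the only — obstacle is this last step: ensuring that correcting the signs of the diagonal entries of $a$ does not destroy the off-diagonal colour equalities. This is exactly where compatibility condition (ii) is indispensable: if two edges of the same colour were allowed to have children of different colours, then the diagonal signs $\epsilon_{ii}$ and $\epsilon_{kk}$ could differ, $\tilde a$ would leave $A(\Gcal,c)$, and the statement would fail. Everything else reduces to the standard existence and uniqueness of the Cholesky decomposition.
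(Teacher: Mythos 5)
Your proposal is correct and follows essentially the same route as the paper: both arguments left-multiply $a$ by the diagonal sign matrix $\mathrm{diag}(\mathrm{sgn}(a_{11}),\ldots,\mathrm{sgn}(a_{mm}))$ to obtain the Cholesky factor, and both invoke compatibility condition (ii) (via the vertex-colour equality $c(i)=c(k)\Rightarrow a_{ii}=a_{kk}$) to see that the sign flips preserve the edge-colour equalities of $A(\Gcal,c)$. Your write-up is, if anything, slightly more explicit than the paper's about where each hypothesis enters.
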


\begin{proof} 
The set $\Mcal_{A(\Gcal,c)}$ consists of all matrices $\Psi$ of the form
$a\T a$ for some $a \in A(\Gcal,c)$, see~\eqref{eqn:ME}. The matrix $a$ is upper triangular by the structure of $\Gcal$. 
To get the Cholesky decomposition, it remains to modify~$a$ to have positive diagonal entries. 
 We replace $a$ by $ka$, where $k$ is the diagonal matrix with $k_{ii} = 1$ if $a_{ii} > 0$ and $k_{ii} = -1$ if $a_{ii} < 0$. Then $ka$ 
 flips the sign of all rows of $a$ with negative diagonal entry, hence it
 has all diagonal entries strictly positive. The compatibility of the colouring ensures that $a_{ij} = a_{kl}$ can only hold in $A(\Gcal,c)$ if $a_{ii} = a_{kk}$. Hence multiplying by $k$ doesn't break any edge compatibility conditions, and $ka \in A(\Gcal,c)$.
\end{proof}

The LDL and Cholesky decompositions are both unique, since $\Psi$ is (strictly) positive definite. They are related by:
$$ \begin{matrix*}[l] \text{Cholesky from LDL:} & a = D^{1/2} L\T, & \\ 
\text{LDL from Cholesky:} & D = \mathrm{diag}(a_{11}^2,\ldots,a_{mm}^2), & L\T = D^{-1/2} a.\end{matrix*} $$ 
The following lemma is proved by comparing zero patterns in the two decompositions. 

\begin{lemma}
\label{lem:DAG_MA}
The DAG model on $\Gcal$ is the model~$\Mcal_{A(\Gcal)}$.\end{lemma}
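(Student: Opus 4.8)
The plan is to prove the two set inclusions directly, by exhibiting explicit correspondences between the data $(\Lambda,\Omega)$ defining a point of the DAG model and a matrix $a \in A(\Gcal)$ with $a\T a = \Psi$, using only the vertex ordering convention and the uniqueness of triangular factorisations. The first thing I would record is the structural fact that every $a \in A(\Gcal)$ is \emph{upper triangular} and invertible: if $i \neq j$ and $a_{ij} \neq 0$, then $j \to i$ in $\Gcal$, and by the ordering convention $j \to i$ forces $j > i$, so all nonzero off-diagonal entries lie strictly above the diagonal; invertibility is built into the definition of $A(\Gcal)$ and forces each $a_{ii} \neq 0$.

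For the inclusion $\Mcal_{A(\Gcal)} \subseteq \{\text{DAG model}\}$: given $a \in A(\Gcal)$, factor $a = D(\id - \Lambda)$ where $D = \mathrm{diag}(a_{11},\dots,a_{mm})$ and $\id - \Lambda := D^{-1}a$ is upper triangular and unipotent. Then $\Lambda$ is strictly upper triangular, and for $i \neq j$ we have $\lambda_{ij} = -a_{ij}/a_{ii}$, which vanishes whenever $j \not\to i$. Setting $\Omega := D^{-2} = \mathrm{diag}(a_{11}^{-2},\dots,a_{mm}^{-2})$, a diagonal matrix with positive entries, one computes $a\T a = (\id - \Lambda)\T D^2 (\id - \Lambda) = (\id - \Lambda)\T \Omega^{-1}(\id - \Lambda)$, a concentration matrix in the DAG model. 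For the reverse inclusion, given $\Psi = (\id - \Lambda)\T \Omega^{-1}(\id - \Lambda)$ I would set $a := \Omega^{-1/2}(\id - \Lambda)$; then $a$ is invertible, $a\T a = \Psi$, and for $i \neq j$ one has $a_{ij} = (\Omega^{-1/2})_{ii}(-\lambda_{ij})$, which is zero unless $j \to i$, so $a \in A(\Gcal)$.

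I do not anticipate a genuine obstacle; the content is just that the constraint defining $A(\Gcal)$ matches, entry by entry, the off-diagonal zero pattern of $\id - \Lambda$, while the diagonal of $a$ is absorbed into $D$ (equivalently into $\Omega$). The only points needing a little care are verifying that $D^{-1}a$ really has ones on the diagonal, so that $\Lambda$ is strictly upper triangular, and noting that positivity of the diagonal of $\Omega$ is automatic from $a_{ii} \neq 0$. Alternatively, one could deduce the lemma from Lemma~\ref{lem:cholesky_MA} applied to the colouring in which every vertex and edge receives a distinct colour, which is trivially compatible and has $A(\Gcal,c) = A(\Gcal)$, combined with the stated dictionary between the Cholesky and LDL decompositions; I would nonetheless favour the direct argument above, as it is shorter and self-contained.
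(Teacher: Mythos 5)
Your proof is correct and follows essentially the same route as the paper: both inclusions reduce to the entrywise dictionary $a=\Omega^{-1/2}(\id-\Lambda)$, i.e.\ $a_{ii}=\omega_{ii}^{-1/2}$ and $a_{ij}=-\omega_{ii}^{-1/2}\lambda_{ij}$, so that the off-diagonal zero pattern of $a$ matches that of $\Lambda$. The one small difference is that for $\Mcal_{A(\Gcal)}\subseteq\{\text{DAG model}\}$ the paper first normalises to the Cholesky factor via Lemma~\ref{lem:cholesky_MA}, whereas you factor an arbitrary $a\in A(\Gcal)$ directly as $D(\id-\Lambda)$ with $\Omega=D^{-2}$, which is self-contained, avoids the sign-flipping step, and in fact never uses the ``uniqueness of triangular factorisations'' you advertise at the outset.
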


\begin{proof}
The LDL decomposition of $\Psi = (\id - \Lambda)\T \Omega^{-1} (\id - \Lambda)$ is given by $D = \Omega^{-1}$ and $L = (\id - \Lambda)\T$. The Cholesky decomposition has 
\begin{equation}
    \label{eqn:entries_a}
     a = D^{1/2} L\T = \Omega^{-1/2} (\id - \Lambda), \quad \text{i.e.} \quad a_{ij} = \begin{cases} \omega_{ii}^{-1/2} & \text{if } i = j \\ 
-\omega_{ii}^{-1/2} \lambda_{ij} 
& \text{if } i \neq j .
\end{cases} 
\end{equation}
We have containment of the DAG model inside $\Mcal_{A(\Gcal)}$, because
if $j \not \to i$ in $\Gcal$, then $\lambda_{ij} = 0$ and therefore $a_{ij} = 0$. Conversely, given $\Psi \in \Mcal_{A(\Gcal)}$, its Cholesky decomposition is $a\T a$ for some $a \in A(\Gcal)$ by Lemma~\ref{lem:cholesky_MA}, since the colouring that assigns all vertices and edges different colours satisfies $A(\Gcal) = A(\Gcal,c)$ and is compatible. Hence $a_{ij} = 0$ for $i \neq j$ unless $j \to i$ in~$\Gcal$. We therefore have $(\id - \Lambda)_{ij} = L_{ji} = \frac{1}{a_{ii}} a_{ij} = 0$, where $a_{ii} \neq 0$, and hence $\lambda_{ij} = 0$. 
\end{proof}

We prove Proposition~\ref{prop:compatibleColouring}, by comparing symmetries in the two decompositions.

\begin{proof}[Proof of Proposition~\ref{prop:compatibleColouring}]
Given $\Psi$ in the RDAG model, we show that its Cholesky decomposition is $\Psi = a\T a$ with $a \in A(\Gcal,c)$. By Lemma~\ref{lem:DAG_MA}, $a \in A(\Gcal)$ and it remains to show that the colour conditions hold, i.e. that $a_{ij} = a_{kl}$ whenever $c(ij) = c(kl)$. 
If $c(ii) = c(kk)$, then $\omega_{ii} = \omega_{kk}$ since $\Omega$ respects the colouring.
This shows that $a_{ii} = a_{kk}$, using~\eqref{eqn:entries_a}. 
If $c(ij) = c(kl)$ for edges $j \to i$ and $l \to k$, then $\lambda_{ij} = \lambda_{kl}$ since $\Lambda$ respects the colouring and, moreover, $\omega_{ii} = \omega_{kk}$ by compatibility. This implies $a_{ij} = a_{kl}$, by~\eqref{eqn:entries_a}.

Conversely, given $\Psi \in \Mcal_{A(\Gcal,c)}$, we show that $\Psi$ is in the RDAG model. The Cholesky decomposition is $\Psi = a\T a$ for $a \in A(\Gcal,c)$, by Lemma~\ref{lem:cholesky_MA}. The entries of $\Omega$ and $\Lambda$ are $\omega_{ii} = a_{ii}^{-2}$ and $\lambda_{ij} = -a_{ii}^{-1} a_{ij}$, by~\eqref{eqn:entries_a}, which satisfy the RDAG model conditions. Hence a compatible colouring implies the equivalence of the RDAG model on $(\Gcal,c)$ and $\Mcal_{A(\Gcal,c)}$.

If the colouring is not compatible, we exhibit some $\Psi$ in the RDAG model that is not in $\Mcal_{A(\Gcal,c)}$. Let $\Psi = a\T a$ be the Cholesky decomposition. If there is some $a' \in A(\Gcal,c)$ with $\Psi = (a')\T a'$ then, similar to the proof of Lemma~\ref{lem:cholesky_MA}, there is a diagonal matrix $o$ with entries $\pm 1$ with $oa' = a$. First, if Definition~\ref{dfn:compatibleColoring}(i) does not hold, there is a vertex $k$ and an edge $j \to i$ with $c(k) = c(ij)$. The RDAG model imposes no relation between $\omega_{kk}$ and $\lambda_{ij}$, so let $\Psi$ be given by some $\Omega$ and $\Lambda$ with $\omega_{kk} = 1$ and $\lambda_{ij} = 0$. Then $o_{kk} a'_{kk} = a_{kk} = 1$ and $o_{ii} a'_{ij} = a_{ij} = 0$, by~\eqref{eqn:entries_a}. Hence, $a'_{kk} \neq 0 = a'_{ij}$ and therefore $\Psi \notin \Mcal_{A(\Gcal,c)}$.
Second, if Definition~\ref{dfn:compatibleColoring}(ii) does not hold, then there exist edges $j \to i$ and $l \to k$ with $c(ij) = c(kl)$ but $c(i) \neq c(k)$. We choose $\Psi$ given by some $\Omega$ and $\Lambda$ with $\omega_{ii} = 1$, $\omega_{kk} = \frac14$ and $\lambda_{ij} = \lambda_{kl} = -1$. Then $o_{ii} a'_{ij} = a_{ij} = 1$ and $o_{kk} a'_{kl} = a_{kl} = 2$, by \eqref{eqn:entries_a}. Thus, $\vert a'_{ij} \vert  = 1 \neq 2 = \vert a'_{kl} \vert$ and, again, we cannot have $\Psi \in \Mcal_{A(\Gcal,c)}$.
\end{proof}

\begin{example}
\label{ex:RDAG1}
We return to the graph
\begin{tikzcd}[cramped, sep = small]
        {\color{blue}\circled{1}} & \squared{3} \ar[r, red] \ar[l, red] & {\color{blue}\circled{2}}
    \end{tikzcd}
from
Examples~\ref{ex:RDAGminus1} and~\ref{ex:RDAG0}.
The colouring is compatible, because the set of vertex colours $\{ \text{blue}, \text{black} \}$ is disjoint from the edge colour set $\{ \text{red} \}$, and the children of both red edges have the same colour. Hence Proposition~\ref{prop:compatibleColouring} shows that the RDAG model is equal to     \begin{equation}
    \label{eqn:RDAG1}
        \Mcal_{A(\Gcal,c)}=
        \left\lbrace \begin{pmatrix} d_1^2 & 0 & r d_1 \\ 0 & d_1^2 & r d_1 \\ r d_1 & r d_1 & 2r^2 + d_2^2 \end{pmatrix} \, \Bigg\vert \, d_1, d_2 \neq 0 \right\rbrace.
    \end{equation}
\end{example}

\begin{remark}\label{rem:complexRDAG}
RDAG models can also be defined over the complex numbers. Here, the parameters $\Lambda$ can be complex, and we obtain a subset of $\PD_m$ by taking conjugate transposes, $\Psi = (\id - \Lambda)^\dagger \Omega^{-1} (\id - \Lambda)$. For the $\Mcal_A$ characterisation, we replace $a\T a$ by $a^\dagger a$. Many of our results and proofs can be modified to hold in the complex setting. We return to complex RDAGs in Section~\ref{sec:popov}.
\end{remark}

\section{Comparison of RDAG and RCON models}
\label{sec:RDAG_vs_RCON}

Given a directed graph, we can forget the direction of each edge to give an undirected graph. We characterise when the RDAG model on the coloured directed graph is equal to the undirected model on the corresponding coloured undirected graph in Theorem~\ref{thm:RCONequalsRDAG}.
We begin by comparing RDAG and RCON models in two examples.

\begin{example}[RDAG $=$ RCON]\label{ex:RCONequalsColoredTDAG}
We revisit our running example  \begin{tikzcd}[cramped, sep = small]
        {\color{blue}\circled{1}} & \squared{3} \ar[r, red] \ar[l, red] & {\color{blue}\circled{2}}
    \end{tikzcd}. 
The corresponding RCON model has coloured undirected graph \begin{tikzcd}[cramped, sep = small]
        {\color{blue}\circled{1}} & \squared{3} \ar[r, red, no head] \ar[l, red, no head] & {\color{blue}\circled{2}}
    \end{tikzcd}, with blue (circular) vertices $\{ 1, 2\}$, black (square) vertex $3$, and red edges. 
By Definition~\ref{def:rcon}, the RCON model is the set of positive definite matrices of the form
    \begin{align*}
    \Psi = \begin{pmatrix} \delta_1 & 0 & \varrho \\ 0 & \delta_1 & \varrho \\ \varrho & \varrho & \delta_2 \end{pmatrix}.
    \end{align*}
Since the colouring is compatible, the RDAG model is equal to $\Mcal_{A(\Gcal,c)}$ from~\eqref{eqn:RDAG1}. 
Any matrix in $\Mcal_{A(\Gcal,c)}$ satisfies the equalities for the RCON model, so we have containment of the RDAG model in the RCON model. Conversely, given $\Psi$ in the RCON model,
    \begin{align*}
         \det(\Psi) = \delta_1^2 (\delta_2 - 2 \varrho^2 \delta_1^{-1}) > 0 \quad \text{ hence} \quad
         \delta_2 - 2 \varrho^2 \delta_1^{-1}  > 0,
    \end{align*}
since $\Psi$ is positive-definite. 
Positive definiteness of $\Psi$ also implies $\delta_1, \delta_2 > 0$. 
We obtain real numbers $d_1 := \sqrt{\delta_1}$, $r := \varrho / d_1$ and $d_2 := \sqrt{\delta_2 - 2 \varrho^2 \delta_1^{-1}}$, which shows that $\Psi$ is of the form in~\eqref{eqn:RDAG1}.
\end{example}

\begin{example}[RDAG $\neq$ RCON]
Consider the RDAG model on
    \begin{tikzcd}[cramped, sep = small]
        {\color{blue}\circled{1}} & {\color{blue}\circled{2}} \ar[l, red]
    \end{tikzcd},
the graph with two blue (circular) vertices $\{ 1, 2\}$ and a red edge. The colouring is compatible, so by Proposition~\ref{prop:compatibleColouring} the RDAG model is $\Mcal_{A(\Gcal,c)}$, where
$$ A(\Gcal,c) = \left\{ \begin{pmatrix} d & r \\ 0 & d \end{pmatrix} \, \bigg\vert \, d \neq 0 \right\} . $$
The corresponding RCON model is on the coloured undirected graph
    \begin{tikzcd}[cramped, sep = small]
        {\color{blue}\circled{1}} & {\color{blue}\circled{2}} \ar[l, no head, red]
    \end{tikzcd}
. The RCON model consists of all $\Psi \in \PD_2$ with $\Psi_{11} = \Psi_{22}$ and $\Psi_{12} = \Psi_{21}$, by Definition~\ref{def:rcon}.
Neither model is contained in the other: the RCON model contains
    \begin{align*}
        \begin{pmatrix} 4 & 2 \\ 2 & 4 \end{pmatrix} = 
        \begin{pmatrix} 2 & 0 \\ 1 & \sqrt{3} \end{pmatrix} \begin{pmatrix} 2 & 1 \\ 0 & \sqrt{3} \end{pmatrix}
    \end{align*}
but the diagonal entries $2$ and $\sqrt{3}$ in the Cholesky decomposition do not satisfy the conditions for $A(\Gcal,c)$. Conversely, the matrix
    \begin{align*}
        \begin{pmatrix} 1 & 0 \\ 2 & 1 \end{pmatrix} \begin{pmatrix} 1 & 2 \\ 0 & 1 \end{pmatrix}
        = \begin{pmatrix} 1 & 2 \\ 2 & 5 \end{pmatrix}
    \end{align*}
is in the RDAG model, but not the RCON model, since $\Psi_{11} \neq \Psi_{22}$. 
\end{example}

To characterize when an RDAG model is equal to its corresponding RCON model, we give two constructions of coloured graphs obtained from some $(\Gcal,c)$, one that is built from a vertex and the other from an edge. As before, $\Gcal = (I,E)$.

Fix a vertex $i \in I$. Recall that the children of $i$ are the vertices $k$ with $(i \to k) \in E$.
Consider the subgraph on vertex set $\{ i \} \cup {\rm ch}(i)$ with edges $i \to k$ for each $k \in {\rm ch}(i)$, and colours inherited from $(\Gcal, c)$. We denote the subgraph by $\Gcal_i$.

Now fix an edge $(j \to i) \in E$. Consider vertices $\{ i \} \cup \left( {\rm ch}(i)\cap{\rm ch}(j) \right)$ with vertex colours inherited from $(\Gcal,c)$. For each $k \in {\rm ch}(i)\cap{\rm ch}(j)$, we introduce two edges $i \to k$, one with colour $c(ki)$ and the other with colour $c(kj)$. We denote this graph by $\Gcal_{(j \to i)}$.
 
 \begin{example}
 \label{ex:half_jumbled} 
Consider the graph\footnote{\label{note1}with three vertex colours (blue/circular, black/square, and purple/triangular) and four edge colours (red/solid, green/squiggly, orange/dashed, and brown/dotted)}
    \begin{center}
    \begin{tikzcd}[column sep = small, row sep = small,decoration={snake,amplitude=0.8pt}]
        & & & {\color{Fuchsia}\triangled{5}} \ar[dl, orange, dashed] \ar[dll, OliveGreen, bend right = 7,decorate]\ar[dlll, red, bend right = 20] \ar[dd, Maroon,thick,dotted] \\
        \squared{1} & \squared{2} & \squared{3} & \\
        & & & {\color{blue}\circled{4}} \ar[ul, OliveGreen,decorate] \ar[ull, red, bend left = 10]\ar[ulll, orange, bend left = 20, dashed] & 
    \end{tikzcd}
    \end{center}
    The vertex construction at vertex $5$ and edge construction at edge
    $5 \to 4$ are:
    \begin{center} 
    $\Gcal_5 =$\begin{tikzcd}[column sep = small, row sep = small,decoration={snake,amplitude=0.8pt}]
        & & & {\color{Fuchsia}\triangled{5}} \ar[dl, orange,dashed] \ar[dll, OliveGreen, bend right = 10,decorate]\ar[dlll, red, bend right = 20] \ar[d, Maroon,dotted] \\
        \squared{1} & \squared{2} & \squared{3} & {\color{blue}\circled{4}} & 
    \end{tikzcd}
    \qquad \qquad
     $\Gcal_{(5 \to 4)} =$
    \begin{tikzcd}[column sep = scriptsize, row sep = small,decoration={snake,amplitude=0.8pt}]
        \squared{1} & \squared{2} & \squared{3} & {\color{blue}\circled{4}} \ar[l, orange, bend right = 20,dashed] \ar[ll, OliveGreen, bend right = 40,decorate] \ar[lll, red, bend right = 50] \ar[l, OliveGreen, bend left = 30,decorate] \ar[ll, red, bend left = 40]\ar[lll, orange, bend left = 50,dashed]
    \end{tikzcd}
    \end{center}
    \end{example} 
    
Given a DAG $\Gcal$, its corresponding undirected graph is denoted $\Gcal^u$. Similarly, given a coloured DAG $(\Gcal, c)$, its corresponding undirected coloured graph (where the colour of a directed edge becomes the colour of the undirected edge) is $(\Gcal^u,c)$.
Two coloured graphs $(\Gcal,c)$ and $(\Gcal',c')$ are {\em isomorphic} if the coloured graphs are the same up to relabelling vertices. We denote an isomorphism by $\Gcal \simeq \Gcal'$ when the colouring is clear.

 \begin{theorem}\label{thm:RCONequalsRDAG}
Consider the RDAG model on $(\Gcal,c)$ where colouring $c$ is compatible. The 
RDAG model on $(\Gcal,c)$
is equal to the RCON model on $(\Gcal^u,c)$ if and only if:
\begin{itemize}
\item[(a)] $\Gcal$ has no unshielded colliders; 
\item[(b)] $\Gcal_i \simeq \Gcal_j$ for every pair of vertices $i,j$ of the same colour; and
\item[(c)] $\Gcal_{(j \to i)} \simeq \Gcal_{(l \to k)}$ for every pair of edges $j \to i$ and $l \to k$ of the same colour.
\end{itemize}
\end{theorem}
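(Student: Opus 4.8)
The plan is to describe both models in terms of a single Cholesky factor and compare them entry by entry. Fix the vertex ordering in which $j\to i$ forces $j>i$, so that every $a$ with the zero pattern of $A(\Gcal)$ is upper triangular with $a_{ki}=0$ for $k\neq i$ unless $i\to k$. A short computation then gives, for $\Psi=a\T a$,
\[ \Psi_{ii}=a_{ii}^2+\sum_{k\in\ch(i)}a_{ki}^2, \qquad \Psi_{ij}=a_{ii}a_{ij}+\sum_{k\in\ch(i)\cap\ch(j)}a_{ki}a_{kj}\quad (i\neq j), \]
with $a_{ij}=0$ unless $j\to i$, so the term $a_{ii}a_{ij}$ disappears when $i,j$ are non-adjacent in $\Gcal^u$. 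Reading these off: for $a\in A(\Gcal,c)$ the entry $\Psi_{ii}$ depends only on $c(i)$ and the isomorphism type of $\Gcal_i$, and for an edge $j\to i$ the entry $\Psi_{ij}$ depends only on $c(i)$, $c(ij)$ and the isomorphism type of $\Gcal_{(j\to i)}$. Using compatibility (which makes an edge colour determine the colour of its child vertex), this correspondence is \emph{exact}: since the off-diagonal entries of a general $a\in A(\Gcal,c)$ are free up to the imposed colour equalities, and the squares and products of those entries appearing above never cancel, two such expressions are identically equal in the parameters if and only if the relevant $\Gcal_i$, respectively $\Gcal_{(j\to i)}$, are isomorphic.

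To prove that (a), (b), (c) imply the models agree, I would establish both inclusions. For RDAG $\subseteq$ RCON, take $a\in A(\Gcal,c)$ and verify Definition~\ref{def:rcon} for $\Psi=a\T a$: part (1) holds because (a) rules out an unshielded collider $i\to k\leftarrow j$, so non-adjacent $i,j$ have no common child and the sum above is empty; for $c(i)=c(j)$, part (2) holds since (b) gives a colour-preserving bijection $\ch(i)\to\ch(j)$ and $c(i)=c(j)$ forces $a_{ii}=a_{jj}$; for $c(ij)=c(kl)$, part (3) holds since that equality forces $a_{ij}=a_{kl}$, compatibility forces $c(i)=c(k)$ hence $a_{ii}=a_{kk}$, and (c) matches the common-children sums. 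For RCON $\subseteq$ RDAG, take $\Psi$ in the RCON model and its unique Cholesky decomposition $\Psi=a\T a$; one must check $a\in A(\Gcal,c)$. The zero pattern comes first, by induction on rows of increasing index using the Cholesky recursion $a_{ij}=\big(\Psi_{ij}-\sum_{k<i}a_{ki}a_{kj}\big)/a_{ii}$: the inductive hypothesis forces every surviving term $a_{ki}a_{kj}$ of the sum to have $k$ a common child of $i$ and $j$, and whenever $i<j$ with $j\not\to i$ the pair $i,j$ is non-adjacent (the ordering makes any edge between them point $j\to i$), so $\Psi_{ij}=0$, the sum is empty by (a), and $a_{ij}=0$. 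This is the only use of (a); it is exactly the statement that, for a DAG with no unshielded colliders, $\Mcal_{A(\Gcal)}$ coincides with the undirected Gaussian model on the skeleton, namely $\{\Psi\in\PD_m : \Psi_{ij}=0 \text{ whenever } i,j \text{ are non-adjacent in } \Gcal^u\}$.

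The heart of the proof is to upgrade the zero pattern to the colour equalities $a_{ij}=a_{kl}$ for $c(ij)=c(kl)$, again by induction on rows of increasing index. Handling row $i$, suppose $c(i_1j_1)=c(i_2j_2)$ with $i_1,i_2\le i$. If both entries are diagonal then $c(i_1)=c(i_2)$, so $\Psi_{i_1i_1}=\Psi_{i_2i_2}$ by the RCON relations and $\Gcal_{i_1}\simeq\Gcal_{i_2}$ by (b); the children involved have strictly smaller index, so the inductive hypothesis matches $\sum a_{ki_1}^2$ with $\sum a_{ki_2}^2$, forcing $a_{i_1i_1}^2=a_{i_2i_2}^2$ and hence $a_{i_1i_1}=a_{i_2i_2}$ (the Cholesky diagonal is positive). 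If both entries are off-diagonal then compatibility gives $c(i_1)=c(i_2)$, so $a_{i_1i_1}=a_{i_2i_2}$ by the previous case; $\Psi_{i_1j_1}=\Psi_{i_2j_2}$ by the RCON relations; $\Gcal_{(j_1\to i_1)}\simeq\Gcal_{(j_2\to i_2)}$ by (c); and since the two edge colours at a common child enter only through the symmetric product $a_{ki}a_{kj}$, an unordered match suffices for the inductive hypothesis to identify the common-children sums, whence $a_{i_1j_1}=a_{i_2j_2}$ after subtracting and dividing by $a_{i_1i_1}=a_{i_2i_2}\neq 0$. The mixed case cannot occur because compatibility condition (i) makes vertex and edge colours disjoint; this finishes $a\in A(\Gcal,c)$. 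For necessity, assuming the models agree I would contradict the failure of each condition by exhibiting $a\in A(\Gcal,c)$ with $\Psi=a\T a$ breaking an RCON relation: taking $a_{ii}=1$ and every off-diagonal entry equal to $-\eps$ gives $\Psi_{ij}=|\ch(i)\cap\ch(j)|\,\eps^2>0$ at an unshielded collider, contrary to Definition~\ref{def:rcon}(1); if $\Gcal_i\not\simeq\Gcal_j$ for same-coloured $i,j$ then, by the exactness above, $\Psi_{ii}-\Psi_{jj}$ is a nonzero polynomial in the colour parameters, so some $a$ breaks Definition~\ref{def:rcon}(2); and if $\Gcal_{(j\to i)}\not\simeq\Gcal_{(l\to k)}$ for same-coloured edges then, since compatibility cancels the leading terms $a_{ii}a_{ij}$ and $a_{kk}a_{kl}$, the difference $\Psi_{ij}-\Psi_{kl}$ is a nonzero polynomial, so some $a$ breaks Definition~\ref{def:rcon}(3).

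The step I expect to be the main obstacle is the colour-equality induction in the RCON $\subseteq$ RDAG direction. It is the only place where all of (a), (b), (c) and both halves of compatibility are used at once, and it depends on getting the processing order right — youngest vertices first, so that every child of a vertex and every common child of the two endpoints of an edge has already been treated — and on checking carefully that the graph isomorphisms $\Gcal_i\simeq\Gcal_j$ and $\Gcal_{(j\to i)}\simeq\Gcal_{(l\to k)}$ encode exactly the term-by-term matchings of the Cholesky formulas that the induction requires.
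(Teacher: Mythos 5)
Your proposal is correct and follows essentially the same route as the paper: the paper likewise splits the theorem into the two containments, characterises RDAG $\subseteq$ RCON by comparing the polynomial identities $(a\T a)_{ii}=(a\T a)_{jj}$ and $(a\T a)_{ij}=(a\T a)_{kl}$ for a general $a\in A(\Gcal,c)$ with indeterminate entries (which yields the necessity of (a)--(c) exactly as in your counterexample argument), and proves RCON $\subseteq$ RDAG by the same induction on the Cholesky factor using the recursions for $a_{ll}$ and $a_{ij}$. The only cosmetic difference is that the paper invokes the known no-unshielded-collider criterion for when a DAG model equals its undirected counterpart, rather than rederiving that zero-pattern step directly from the Cholesky recursion as you do.
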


\begin{example}
Our running example  \begin{tikzcd}[cramped, sep = small]
        {\color{blue}\circled{1}} & \squared{3} \ar[r, red] \ar[l, red] & {\color{blue}\circled{2}}
    \end{tikzcd}
    satisfies the 
    conditions of Theorem~\ref{thm:RCONequalsRDAG}: it has no unshielded colliders, the graphs $\Gcal_1$ and $\Gcal_2$ both consist of a single blue vertex, and $\Gcal_{(3\to 1)}$ and $\Gcal_{(3 \to 2)}$ are both isomorphic to \begin{tikzcd}[cramped, sep = small]
        {\color{blue}\circled{1}} & \squared{2}  \ar[l, red]
    \end{tikzcd}.
    The RDAG and RCON models are therefore equivalent, as we saw in Example~\ref{ex:RCONequalsColoredTDAG}. 
    \end{example}

\begin{example}
The following graph also satisfies the conditions of Theorem~\ref{thm:RCONequalsRDAG}:
    \begin{center}
    \begin{tikzcd}[column sep = small, row sep = small,decoration={snake,amplitude=0.8pt}]
        & & & {\color{Fuchsia}\triangled{9}} \ar[dl, orange,dashed] \ar[dll, OliveGreen, bend right = 10,decorate]\ar[dlll, red, bend right = 20] \ar[dd, Maroon,thick,dotted] & {\color{Fuchsia}\tiny\triangled{10}} \ar[dr, orange,dashed] \ar[drr, OliveGreen, bend left = 10,decorate]\ar[drrr, red, bend left = 20] \ar[dd, Maroon,thick,dotted] & & &\\
        \squared{1} & \squared{2} & \squared{3} & & & \squared{4} & \squared{5} & \squared{6} \\
        & & & {\color{blue}\circled{7}} \ar[ul, OliveGreen,decorate] \ar[ull, red, bend left = 10]\ar[ulll, orange, bend left = 20,dashed] & {\color{blue}\circled{8}} \ar[ur, red] \ar[urr, orange, bend right = 10,dashed]\ar[urrr, OliveGreen, bend right = 20,decorate] & & &
    \end{tikzcd}
    \end{center}
\begin{itemize}
    \item[(a)] It has no unshielded colliders.
    \item[(b)] For the black (square) vertices, the graphs $\Gcal_i$ consist of one black vertex. For the blue (circular) vertices, the $\Gcal_i$ are isomorphic to
    \begin{center}
    \begin{tikzcd}[column sep = small, row sep = small,decoration={snake,amplitude=0.8pt}]
        \squared{1} & \squared{2} & \squared{3} & {\color{blue}\circled{4}} \ar[l, orange, bend right = 20,dashed] \ar[ll, OliveGreen, bend right = 30,decorate] \ar[lll, red, bend right = 40]
    \end{tikzcd}
    \end{center}
    The purple (triangular) vertices have $\Gcal_i$ isomorphic to $\Gcal_5$ from Example~\ref{ex:half_jumbled}.
    \item[(c)] All edges $j \to i$ have $\ch(j) \cap \ch(i) = \emptyset$, except for the two brown edges. For these, $\Gcal_{(10 \to 8)}$ and $\Gcal_{(9 \to 7)}$ are both isomorphic to $\Gcal_{(5 \to 4)}$ from Example~\ref{ex:half_jumbled}.
\end{itemize} 
Hence the RDAG model on this coloured graph is equal to the RCON model on the underlying undirected graph.
Note that the two connected components of $(\Gcal,c)$ are not isomorphic. We will see why this is not required for the proof of Theorem~\ref{thm:RCONequalsRDAG}, i.e. why we can collapse vertices $i$ and $j$ in the definition of $\Gcal_{(j \to i)}$.
\end{example}

One ingredient to our proof of Theorem~\ref{thm:RCONequalsRDAG} is the condition for a DAG model to be equal to its corresponding undirected graphical model.

\begin{theorem}[{Gaussian special case of~\cite[Theorem 3.1]{andersson1997markov},~\cite[Theorem 5.6]{frydenberg1990chain}.}]
\label{thm:DAGCONeq}
The DAG model on $\Gcal$ is equal to the undirected Gaussian graphical model on $\Gcal^u$ if and only if $\Gcal$ has no unshielded colliders.
\end{theorem}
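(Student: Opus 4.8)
The plan is to prove Theorem~\ref{thm:DAGCONeq} directly, comparing the two models as subsets of $\PD_m$. By Lemma~\ref{lem:DAG_MA} the DAG model is $\Mcal_{A(\Gcal)}$, i.e.\ the set of $\Psi$ whose Cholesky factor $a$ satisfies $a_{ij}=0$ for $i\neq j$ with $j\not\to i$ (the pattern of $A(\Gcal)$ from~\eqref{eq:AG}), while the undirected model is the set of $\Psi\in\PD_m$ with $\Psi_{ij}=0$ for every non-edge of $\Gcal^u$. For the ``only if'' direction I would argue by contrapositive: if $\Gcal$ has an unshielded collider $i\to k\leftarrow j$ (so $i$ and $j$ are non-adjacent, and $k\neq i,j$), take the point of the DAG model with $\lambda_{ki}=\lambda_{kj}=1$, all other off-diagonal entries of $\Lambda$ zero, and $\Omega=\id$. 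Then columns $i$ and $j$ of $\id-\Lambda$ are $e_i-e_k$ and $e_j-e_k$, so~\eqref{eq:graphmodel} gives $\Psi_{ij}=\langle e_i-e_k,\,e_j-e_k\rangle=1\neq 0$. Since $\{i,j\}$ is not an edge of $\Gcal^u$, this $\Psi$ lies outside the undirected model, so the two models differ.

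For the ``if'' direction, assume $\Gcal$ has no unshielded colliders. The inclusion (DAG model)\,$\subseteq$\,(undirected model) falls out of the Cholesky picture: for $\Psi=a\T a$ with $a\in A(\Gcal)$, the support of column $i$ of $a$ is $\{i\}\cup\ch(i)$, so for $i\neq j$ the entry $\Psi_{ij}=\sum_\ell a_{\ell i}a_{\ell j}$ can be nonzero only if $i\to j$, or $j\to i$, or $i$ and $j$ share a child. The last case is an unshielded collider when $i$ and $j$ are non-adjacent, hence is excluded, and the first two cases cannot occur for a non-edge of $\Gcal^u$. Thus $\Psi_{ij}=0$ for every non-edge, i.e.\ $\Psi$ is in the undirected model.

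The reverse inclusion (undirected model)\,$\subseteq$\,(DAG model) I would prove by induction on $m$, peeling off vertex $1$, which is a sink of $\Gcal$ by the chosen vertex ordering. Given $\Psi$ in the undirected model, write $\Psi=a\T a$ with $a$ upper triangular of positive diagonal: in block form its first row is $\bigl(\sqrt{\Psi_{11}},\ \Psi_{12}/\sqrt{\Psi_{11}},\ \dots,\ \Psi_{1m}/\sqrt{\Psi_{11}}\bigr)$ and its lower-right $(m-1)\times(m-1)$ block is the Cholesky factor of the Schur complement $\widetilde\Psi=\Psi_{\{2,\dots,m\}}-\Psi_{\{2,\dots,m\},1}\,\Psi_{11}^{-1}\,\Psi_{1,\{2,\dots,m\}}$. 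The first row already lies in the $A(\Gcal)$ pattern, because $\Psi_{1j}\neq0$ forces $\{1,j\}$ to be an edge of $\Gcal^u$, hence $j\to 1$ (as $1$ is a sink). The crucial point is that $\widetilde\Psi$ is the undirected-model matrix of $(\Gcal[\{2,\dots,m\}])^u$: for a non-edge $\{i,j\}$ one has $\widetilde\Psi_{ij}=\Psi_{ij}-\Psi_{i1}\Psi_{11}^{-1}\Psi_{1j}$, the first term is $0$, and if $\Psi_{i1}\Psi_{1j}\neq0$ then $i$ and $j$ are both parents of vertex $1$, hence adjacent (there is no unshielded collider at $1$) --- a contradiction. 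Since $\Gcal[\{2,\dots,m\}]$ is again a DAG with no unshielded colliders, the inductive hypothesis puts $\widetilde\Psi$ in its DAG model, so its Cholesky factor lies in $A(\Gcal[\{2,\dots,m\}])$; combined with the first row this gives $a\in A(\Gcal)$, hence $\Psi\in\Mcal_{A(\Gcal)}$.

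I expect the main obstacle to be the graph-theoretic bookkeeping in this inductive step: one must see clearly that ``$\Gcal$ has no unshielded colliders'' is equivalent to ``the parents of every vertex form a clique in $\Gcal^u$'', that this is exactly what makes the Schur complement respect the prescribed zero pattern, and that the hypothesis passes to the induced subgraph $\Gcal[\{2,\dots,m\}]$. A less self-contained alternative is the classical route via Markov properties: each model is the set of mean-zero Gaussians Markov to the corresponding graph, and the directed global Markov property for $\Gcal$ coincides with the undirected one for $\Gcal^u$ precisely when the moralised graph of $\Gcal$ equals $\Gcal^u$, i.e.\ when $\Gcal$ has no unshielded colliders; this is the content of the cited results of Frydenberg and of Andersson--Madigan--Perlman.
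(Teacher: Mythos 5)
Your proof is correct, and it takes a genuinely different route from the paper: the paper does not prove Theorem~\ref{thm:DAGCONeq} at all but cites it as the Gaussian special case of the Markov-equivalence results of Andersson--Madigan--Perlman and Frydenberg, whereas you give a self-contained linear-algebra argument entirely in terms of the Cholesky parametrisation $\Mcal_{A(\Gcal)}$. Each step checks out: the explicit witness $\lambda_{ki}=\lambda_{kj}=1$, $\Omega=\id$ at an unshielded collider does give $\Psi_{ij}=\langle e_i-e_k,e_j-e_k\rangle=1$ on a non-edge; the inclusion of the DAG model in the undirected model follows from the column supports $\{i\}\cup\ch(i)$ exactly as you say; and the inductive peeling of the sink (vertex $1$ under the paper's ordering) works because the Schur complement of a positive definite matrix is positive definite, the no-unshielded-collider condition is precisely what kills the correction terms $\Psi_{i1}\Psi_{11}^{-1}\Psi_{1j}$ on non-edges, and the condition passes to induced subgraphs. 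Your approach buys a fully explicit, elementary proof that the two parametrised sets coincide, matching the Cholesky-based machinery the paper already uses in Lemmas~\ref{lem:cholesky_MA} and~\ref{lem:DAG_MA} and foreshadowing the structure of the proof of Proposition~\ref{prop:RCONinRDAG}; the cited route buys generality (Markov equivalence for arbitrary distributions, not just Gaussians) at the cost of invoking the identification of each parametrised model with the corresponding set of Markov distributions. The only points worth making explicit in a write-up are the base case $m=1$ and the (standard) positive definiteness of the Schur complement needed to invoke the inductive hypothesis.
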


We now prove Theorem~\ref{thm:RCONequalsRDAG} via two propositions.

\begin{proposition}\label{prop:RDAGinRCON}
Let $(\Gcal,c)$ be a coloured DAG with compatible colouring $c$. The RDAG model on $(\Gcal, c)$ is contained in the RCON model on $(\Gcal^u, c)$ if and only if
\begin{itemize}
\item[(a)] $\Gcal$ has no unshielded colliders; 
\item[(b)] $\Gcal_i \simeq \Gcal_j$ for every pair of vertices $i,j$ of the same colour; and
\item[(c)] $\Gcal_{(j \to i)} \simeq \Gcal_{(l \to k)}$ for every pair of edges $j \to i$ and $l \to k$ of the same colour. \end{itemize} 
\end{proposition}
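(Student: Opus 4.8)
The plan is to reduce both implications to three families of polynomial identities in the model parameters. Since $c$ is compatible, Proposition~\ref{prop:compatibleColouring} together with Lemma~\ref{lem:cholesky_MA} lets me write every $\Psi$ in the RDAG model as a Cholesky product $\Psi = a\T a$ with $a\in A(\Gcal,c)$: here $a$ is upper triangular, $a_{pi}=0$ unless $p=i$ or $p\in\ch(i)$, the entry $a_{ii}$ depends only on the vertex colour $c(i)$ — call it $\alpha_{c(i)}$ — and $a_{pi}$ depends only on the colour $c(pi)$ of the edge $i\to p$ — call it $\beta_{c(pi)}$. Expanding $a\T a$ and using acyclicity of $\Gcal$ gives
\[
\Psi_{ii} = \alpha_{c(i)}^{2} + \sum_{p\in\ch(i)}\beta_{c(pi)}^{2},
\qquad
\Psi_{ij} = \begin{cases}
 \alpha_{c(i)}\beta_{c(ij)} + \displaystyle\sum_{p\in\ch(i)\cap\ch(j)}\beta_{c(pi)}\beta_{c(pj)}, & j\to i,\\[1ex]
 \displaystyle\sum_{p\in\ch(i)\cap\ch(j)}\beta_{c(pi)}\beta_{c(pj)}, & i,j\ \text{non-adjacent in }\Gcal^u.
\end{cases}
\]
Because the RDAG parameters range over a Zariski-dense subset of the affine space cut out by the zero pattern and the colour equalities, and because compatibility~(i) keeps the $\alpha_v$ and $\beta_e$ genuinely independent indeterminates, each of conditions (1)--(3) of Definition~\ref{def:rcon} holds throughout the RDAG model if and only if the corresponding expression above is a polynomial identity. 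I will match these up one at a time with (a), (b), (c).

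For condition~(1): for non-adjacent $i,j$ the displayed expression for $\Psi_{ij}$ is a sum of products of $\beta$'s whose coefficients, after collecting repeated monomials, are positive integers, so it vanishes identically iff $\ch(i)\cap\ch(j)=\emptyset$ (setting every $\beta_e=1$ makes this transparent, since then $\Psi_{ij}=|\ch(i)\cap\ch(j)|$). Hence condition~(1) holds on the whole model iff no two non-adjacent vertices share a child, i.e.\ iff $\Gcal$ has no unshielded collider — condition~(a). (For the uncoloured colouring this recovers the containment half of Theorem~\ref{thm:DAGCONeq}.)

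For condition~(2): if $c(i)=c(j)$ the leading terms $\alpha_{c(i)}^2=\alpha_{c(j)}^2$ cancel, so $\Psi_{ii}\equiv\Psi_{jj}$ iff $\sum_{p\in\ch(i)}\beta_{c(pi)}^2=\sum_{q\in\ch(j)}\beta_{c(qj)}^2$, i.e.\ iff $i$ and $j$ emit the same multiset of edge-colours; since by compatibility~(ii) an edge-colour determines the colour of its child, a bijection of out-edges preserving edge-colours automatically preserves child colours, so this multiset condition is exactly $\Gcal_i\simeq\Gcal_j$. Ranging over monochromatic vertex pairs gives~(b). For condition~(3): if $c(ij)=c(kl)$ then compatibility~(ii) forces $c(i)=c(k)$, so the leading terms $\alpha_{c(i)}\beta_{c(ij)}$ and $\alpha_{c(k)}\beta_{c(kl)}$ agree, and $\Psi_{ij}\equiv\Psi_{kl}$ iff $\sum_{p\in\ch(i)\cap\ch(j)}\beta_{c(pi)}\beta_{c(pj)}=\sum_{q\in\ch(k)\cap\ch(l)}\beta_{c(qk)}\beta_{c(ql)}$; each monomial is an unordered product of two edge-colours, so the identity is equivalent to a bijection $\ch(i)\cap\ch(j)\to\ch(k)\cap\ch(l)$ matching the unordered colour-pairs $\{c(pi),c(pj)\}$, and (using compatibility~(ii) again to see that such a bijection matches child colours, and recalling $c(i)=c(k)$) this is precisely $\Gcal_{(j\to i)}\simeq\Gcal_{(l\to k)}$. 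That the $\alpha_{c(i)}\beta_{c(ij)}$ term is forced to match by compatibility rather than by any condition on these gadget graphs is exactly why one may collapse $i$ and $j$ when forming $\Gcal_{(j\to i)}$. Ranging over monochromatic edge pairs gives~(c).

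Assembling the three equivalences, the RDAG model on $(\Gcal,c)$ is contained in the RCON model on $(\Gcal^u,c)$ iff (a), (b) and (c) all hold: for the ``if'' direction one reads each identity off directly, feeding in the bijection supplied by the isomorphism for (b) and (c); for ``only if'' one compares coefficients. I expect the main obstacle to be the bookkeeping in the ``polynomial identity $\Leftrightarrow$ coloured-graph isomorphism'' step — in particular verifying carefully that compatibility~(ii) is precisely what upgrades ``same multiset of edge-colours'' to a genuine isomorphism of $\Gcal_i$, resp.\ $\Gcal_{(j\to i)}$ (so that the child-colour labels in those gadgets carry no extra information), and handling the degenerate cases, such as the two edges $i\to p$ of $\Gcal_{(j\to i)}$ carrying the same colour or $\ch(i)\cap\ch(j)$ being empty.
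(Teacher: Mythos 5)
Your proposal is correct and follows essentially the same route as the paper: parametrise the model by a generic $a\in A(\Gcal,c)$ with one indeterminate per colour, expand $a\T a$, and match the three RCON conditions with (a), (b), (c) as polynomial identities, using compatibility to handle the diagonal/leading terms and to upgrade multiset equality of edge colours to coloured-graph isomorphism. The only cosmetic difference is that you verify condition (1) $\Leftrightarrow$ (a) by a direct coefficient count where the paper cites Theorem~\ref{thm:DAGCONeq}.
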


\begin{proof}
Since the colouring is compatible, the RDAG model is $\mathcal{M}_{A(\Gcal,c)}$, by Proposition~\ref{prop:compatibleColouring}. Let $a \in A(\Gcal,c)$ be a general matrix. We think of it as having indeterminate entries, one for each vertex colour and one for each edge colour. 

If the RDAG model is contained in the RCON model, we must have $(a\T a)_{ij} = 0$ whenever $a_{ij} = a_{ji} = 0$.  This holds iff there are no unshielded colliders in $\Gcal$, by Theorem~\ref{thm:DAGCONeq}. Moreover, certain equalities must hold on $a\T a$. We have vertex colour condition $(a\T a)_{ii} = (a\T a)_{jj}$ whenever $c(i) = c(j)$ and edge colour condition $(a\T a)_{ij} = (a\T a)_{kl}$ whenever $c(ij) = c(kl)$. These give the polynomial identities
\begin{align}
\label{eqn:vertex_condition} a_{ii}^2 + \sum_{k \in {\rm ch}(i)} a_{ki}^2 & = a_{jj}^2 + \sum_{l \in {\rm ch}(j)} a_{lj}^2  & &\text{whenever} \quad c(i) = c(j) \\
\label{eqn:edge_condition} a_{ii} a_{ij} + \sum_{p \neq i,j}^m a_{pi} a_{pj} & =  a_{kk} a_{kl} + \sum_{q \neq k,l}^m a_{qk} a_{ql} & &\text{whenever} \quad c(ij) \! = \! c(kl). 
\end{align} 
We show that~\eqref{eqn:vertex_condition} is equivalent to (b) and that~\eqref{eqn:edge_condition} is equivalent to (c).

Given two vertices $i,j$ with $c(i) = c(j)$, we have $a_{ii}^2 = a_{jj}^2$. The sums in~\eqref{eqn:vertex_condition} are equal if and only if $\vert \ch(i) \vert = \vert \ch(j) \vert$ and the edge colours in $\Gcal_i$ and $\Gcal_j$ agree (counted with multiplicity). By compatibility, the vertex colours also agree, hence~\eqref{eqn:vertex_condition} is equivalent to $\Gcal_i \simeq \Gcal_j$.

Next, we consider~\eqref{eqn:edge_condition}. 
No terms $a_{ji} a_{jj}$ and $a_{lk} a_{ll}$ appear, since there is no edge $i \to j$ or $k \to l$. The compatibility of the colouring gives $a_{ii} = a_{kk}$. Hence $a_{ii} a_{ij} = a_{kk} a_{kl}$.
A summand in~\eqref{eqn:edge_condition} vanishes unless $p \in \ch(i) \cap \ch(j)$ or $q \in \ch(k) \cap \ch(l)$. The sums are equal if and only if $\vert \ch(i) \cap \ch(j) \vert = \vert \ch(k) \cap \ch(l) \vert$ and the graphs $\Gcal_{(j \to i)}$ and $\Gcal_{(l \to k)}$ are isomorphic on their edge colours. By compatibility of the colouring, the vertex colours of the children also agree and $c(i) = c(k)$, hence~\eqref{eqn:edge_condition} is equivalent to $\Gcal_{(j \to i)} \simeq \Gcal_{(l \to k)}$.
\end{proof}

\begin{proposition}\label{prop:RCONinRDAG}
Let $(\Gcal,c)$ be a coloured DAG with compatible colouring $c$ such that
\begin{itemize}
\item[(a)] $\Gcal$ has no unshielded colliders; 
\item[(b)] $\Gcal_i \simeq \Gcal_j$ for every pair of vertices $i,j$ of the same colour; and
\item[(c)] $\Gcal_{(j \to i)} \simeq \Gcal_{(l \to k)}$ for every pair of edges $j \to i$ and $l \to k$ of the same colour.
\end{itemize}
Then the RCON model on $(\Gcal^u,c)$ is contained in the RDAG model on $(\Gcal,c)$.
\end{proposition}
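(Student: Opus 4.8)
The plan is to use Proposition~\ref{prop:compatibleColouring} to replace the RDAG model by $\Mcal_{A(\Gcal,c)}$ and then, for an arbitrary $\Psi$ in the RCON model on $(\Gcal^u,c)$, to show that its Cholesky factor lies in $A(\Gcal,c)$. To get started, observe that the RCON model is contained in the uncoloured undirected Gaussian model on $\Gcal^u$ (Definition~\ref{def:rcon} only adds equalities to that model), and by hypothesis~(a) and Theorem~\ref{thm:DAGCONeq} the latter equals the DAG model on $\Gcal$, that is, $\Mcal_{A(\Gcal)}$ by Lemma~\ref{lem:DAG_MA}. Hence, using Lemma~\ref{lem:cholesky_MA} for the trivial colouring (which is compatible and has $A(\Gcal,\cdot)=A(\Gcal)$), the Cholesky decomposition $\Psi = a\T a$ has $a \in A(\Gcal)$ with positive diagonal. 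So the zero pattern of $a$ is already correct, and the entire content of the proof is to check that $a$ additionally satisfies the colour equalities, i.e. that $a \in A(\Gcal,c)$.

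Since edges of $\Gcal$ point from larger to smaller labels, $a$ is upper triangular: row $t$ holds $a_{tt}$ together with the $a_{tj}$ for $j \in \pa(t)$ (so $j > t$), and column $t$ holds $a_{tt}$ together with the $a_{pt}$ for $p \in \ch(t)$ (so $p < t$). Reading off the $(t,t)$ and $(t,j)$ entries of $a\T a = \Psi$ reproduces identities~\eqref{eqn:vertex_condition} and~\eqref{eqn:edge_condition}, namely $\Psi_{tt} = a_{tt}^2 + \sum_{p \in \ch(t)} a_{pt}^2$ and $\Psi_{tj} = a_{tt}a_{tj} + \sum_{p \in \ch(t)\cap\ch(j)} a_{pt}a_{pj}$ for each edge $j \to t$, where crucially every summation index satisfies $p < t$. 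I would prove, by induction on $t = 1,\dots,m$, the pair of statements: (I) $a_{i'i'} = a_{i''i''}$ whenever $i',i'' \le t$ and $c(i') = c(i'')$; and (II) $a_{i'j'} = a_{k'l'}$ whenever $j' \to i'$ and $l' \to k'$ are edges with $i',k' \le t$ and $c(i'j') = c(k'l')$. The case $t=m$ then says exactly that $a \in A(\Gcal,c)$, so $\Psi \in \Mcal_{A(\Gcal,c)}$, as desired.

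For the inductive step at $t$ I would first handle~(I): given $i'' < t$ with $c(i'') = c(t)$, hypothesis~(b) supplies an isomorphism $\Gcal_t \simeq \Gcal_{i''}$, hence a bijection $\ch(t) \to \ch(i'')$, $p \mapsto q$, with $c(pt) = c(qi'')$; since $p,q < t$, the inductive~(II) gives $a_{pt} = a_{qi''}$, so the two relevant sums agree, and with $\Psi_{tt} = \Psi_{i''i''}$ (Definition~\ref{def:rcon}(2)) we get $a_{tt}^2 = a_{i''i''}^2$, hence $a_{tt} = a_{i''i''}$ by positivity of the diagonal; transitivity then extends~(I) up to $t$. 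Next~(II): given edges $j \to t$ and $l \to k$ with $k \le t$ and $c(tj) = c(kl)$, compatibility condition~(ii) forces $c(t) = c(k)$, so $a_{tt} = a_{kk}$ by~(I) (vacuous if $k=t$); hypothesis~(c) gives $\Gcal_{(j\to t)} \simeq \Gcal_{(l\to k)}$, i.e. a bijection $\ch(t)\cap\ch(j) \to \ch(k)\cap\ch(l)$, $p \mapsto q$, under which the \emph{unordered} pairs of edge colours match, $\{c(pt),c(pj)\} = \{c(qk),c(ql)\}$; whichever way the pair matches, the inductive~(II) (all indices $<t$) together with the symmetry of $a_{pt}a_{pj}$ gives $a_{pt}a_{pj} = a_{qk}a_{ql}$, so the two sums agree; combining with $\Psi_{tj} = \Psi_{kl}$ (Definition~\ref{def:rcon}(3)) yields $a_{tt}a_{tj} = a_{kk}a_{kl} = a_{tt}a_{kl}$, and dividing by $a_{tt} \neq 0$ gives $a_{tj} = a_{kl}$, which extends~(II) up to $t$.

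I expect the calculations to be routine once the induction is set up; the points that need care are the following. One must check that every child index appearing in the sums is strictly smaller than $t$, so that the inductive hypothesis genuinely applies. One must note that the isomorphism $\Gcal_{(j\to t)} \simeq \Gcal_{(l\to k)}$ is permitted to interchange the two parallel edges $t \to p$, which is harmless precisely because $a_{pt}a_{pj}$ is symmetric in its factors — this is exactly why the construction of $\Gcal_{(j\to i)}$ records an unordered pair of colours. Finally, it is compatibility condition~(ii) that delivers $c(t) = c(k)$, hence $a_{tt} = a_{kk}$, in the edge step; this is the place where compatibility is indispensable, and it is the main conceptual (as opposed to bookkeeping) obstacle.
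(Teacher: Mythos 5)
Your proof is correct and follows essentially the same route as the paper's: take the Cholesky factorisation $\Psi = a\T a$, read off the entries via the completion formulas \eqref{eq:RCONinRDAGvertex}--\eqref{eq:RCONinRDAGedge}, and induct, using hypotheses (b) and (c) together with the RCON equalities and compatibility to match the sums term by term (including the correct observation that the unordered matching of the doubled edges in $\Gcal_{(j\to i)}$ is harmless by symmetry of $a_{pt}a_{pj}$). The only differences are cosmetic: you organise the induction by rows rather than by leading principal blocks, and you obtain the zero pattern of $a$ by routing through Theorem~\ref{thm:DAGCONeq} and Lemmas~\ref{lem:DAG_MA} and~\ref{lem:cholesky_MA} instead of arguing directly from the absence of unshielded colliders.
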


\begin{proof}
Given a matrix $\Psi$ in the RCON model on $(\Gcal^u, c)$, we show that it is contained in the RDAG model on $(\Gcal, c)$ by showing that the Cholesky decomposition $\Psi = a\T a$ satisfies the conditions of the set $A(\Gcal,c)$ in~\eqref{eqn:Agc}.
The matrix $a$ is upper triangular. Completing it one column at a time shows that its entries are 
    \begin{align}
        a_{l,l} &= \Big( \Psi_{l,l} \, - \sum_{p \in {\rm ch}(l)} a_{p,l}^2 \Big)^{1/2} \label{eq:RCONinRDAGvertex} \\
        a_{i,j} &= \Big( \Psi_{i,j} \, - \sum_{p \in {\rm ch}(i) \cap {\rm ch}(j)} a_{p,i} a_{p,j} \Big) a_{i,i}^{-1}. \label{eq:RCONinRDAGedge}
    \end{align}
    Note that the expression under the square root in~\eqref{eq:RCONinRDAGvertex} is a positive real number, see~\cite[Lecture 23]{trefethen1997numerical}. 
We need to show that $a_{ij} = 0$ for $i \neq j$ with $j \not \to i$ (the support conditions) and that $a_{ij} = a_{kl}$ whenever $c(ij) = c(kl)$ (the colour conditions).

First we show that $a$ satisfies the support conditions of $A(\Gcal,c)$. If there is no edge from $j$ to $i$ in $\Gcal$, then $\Gcal^u$ has no edge between $i$ and $j$, so $\Psi_{i,j} = 0$. Moreover, all products $a_{p,i} a_{p,j}$ vanish, otherwise $i \to p \leftarrow j$ would be an unshielded collider. Hence $a_{ij} = 0$ if $j \not\to i$ in $\Gcal$.

Next, we show that $a$ satisfies the colour conditions of $A(\Gcal,c)$. 
We prove this inductively over the top left $k \times k$ blocks of $a$.
If $k=1$ there are no symmetries to check.
We assume that the top left $k \times k$ submatrix of $a$ satisfies the symmetries. For the induction step, we compare $a_{1,k+1},a_{2,k+1},\ldots,a_{k+1,k+1}$ with each other and with $a_{i,j}$, $i,j \in [k]$.

If there is an edge $(k+1) \to 1$ with same colour as $j \to i$ for $i,j \in [k]$, we
need to show that $a_{1,k+1} = a_{i,j}$.
First, $a_{11} = a_{ii}$ by compatibility and the induction hypothesis, and $\Psi_{i,j} = \Psi_{1,k+1}$ since $\Psi$ is in the RCON model. Moreover, all $a_{pq}$ for $p,q \in [k]$ respect the symmetries. Since $\Gcal_{(j \to i)} \simeq \Gcal_{(k+1 \to 1)}$, the expressions~\eqref{eq:RCONinRDAGedge} for $a_{i,j}$ and $a_{1,k+1}$ are equal.
Proceeding inductively, we show that all entries $a_{2,k+1},\ldots,a_{k,k+1}$ respect the symmetries of $c$.

Finally, if vertex $k+1$ has same colour as vertex $l \in [k]$, we show $a_{k+1,k+1} = a_{l,l}$. We have $\Gcal_l \simeq \Gcal_{k+1}$ by assumption~(b) and $\Psi_{l,l} = \Psi_{k+1,k+1}$, since $\Psi$ is in the RCON model. Furthermore, we have shown that all $a_{p,q}$, where $p \in [k]$ and $q \in [k+1]$, satisfy the colouring $c$. Altogether, we conclude $a_{l,l} = a_{k+1,k+1}$ using \eqref{eq:RCONinRDAGvertex}.
\end{proof}

\begin{proof}[Proof of Theorem~\ref{thm:RCONequalsRDAG}]
If any of conditions (a),(b), and (c) do not hold, this rules out containment of the RDAG model in the RCON model, by Proposition~\ref{prop:RDAGinRCON}, and hence rules out the two models being equal. If conditions (a),(b),(c) hold, we have containment of the RDAG model inside the RCON model (by Proposition~\ref{prop:RDAGinRCON}) and the reverse containment (by Proposition~\ref{prop:RCONinRDAG}).\end{proof}

\section{MLE: existence, uniqueness, and an algorithm}
\label{sec:MLE_existence}

In this section we characterise the existence and uniqueness of the MLE in an RDAG model via linear dependence conditions on certain matrices. 
Our description specialises to give the characterisation of existence and uniqueness of the MLE in a usual DAG model in terms of linear dependence among the rows of the sample matrix.
    
Let $\alpha_s$ be the number of vertices of colour $s$. 
For an edge $j \to i$, vertex $j$ is called the {\em source} of the edge and $i$ is called the {\em target}.
The \emph{parent relationship colours} are the colours of all edges with a target of colour $s$:
    \begin{align*}
        \prc(s) = \lbrace c(ij) \mid \text{there exists } j \to i \text{ in } \mathcal{G} \text{ with } c(i) = s \rbrace, \qquad \beta_s := \vert \prc(s) \vert.
    \end{align*}
A sample matrix is $Y \in \RR^{m \times n}$; its $m$ rows index the vertices in $\Gcal$, and its columns are the $n$ samples.
For each vertex colour in $\Gcal$ we define an augmented sample matrix.

\begin{definition}
\label{def:MYs}
The {\em augmented sample matrix} of sample matrix $Y$ and vertex colour $s$, denoted~$M_{Y,s}$, has size $(\beta_s +1) \times \alpha_s n$. We construct it row by row. 
Each row consists of $\alpha_s$ blocks, each a vector of length $n$.
For notational simplicity, we assume for now that the vertices of colour $s$ are the set $\{ 1,2, \ldots, \alpha_s\} \subset I$. 
Then the top row of $M_{Y,s}$ is 
$$\begin{pmatrix} Y^{(1)} & Y^{(2)} & \ldots & Y^{(\alpha_s)} \end{pmatrix} \in \RR^{\alpha_s n},$$
where $Y^{(i)} \in \RR^n$ is the $i$th row of sample matrix $Y \in 
\RR^{m \times n}$.
The other rows of $M_{Y,s}$ are indexed by the parent relationship colours $\prc(s)$. The row indexed by $t \in \prc(s)$ is
$$
\left( \sum_{\substack{1 \leftarrow j \\ c(1j) = t}} Y^{(j)} \quad \sum_{\substack{2 \leftarrow j \\ c(2j) = t}} Y^{(j)}  \quad \cdots \quad \sum_{\substack{\alpha_s \leftarrow j \\ c(\alpha_s j) = t}} Y^{(j)}  \right) .$$
Note that the sum at the $k$th block is zero if there are no $j \to k$ of colour $t$.
Let $M_{Y,s}^{(i)}$ denote the $i$th row of $M_{Y,s}$, where we index from $0$ to $\beta_s$. 
\end{definition}

\begin{example}
Our running example \begin{tikzcd}[cramped, sep = small]
        {\color{blue}\circled{1}} & \squared{3} \ar[r, red] \ar[l, red] & {\color{blue}\circled{2}}
    \end{tikzcd}
has two augmented sample matrices, one for each vertex colour:
    \begin{equation}\label{eqn:running_ex_MYs} M_{Y, {\color{blue}\circ} } = \begin{pmatrix} Y^{(1)} & Y^{(2)} \\ Y^{(3)} & Y^{(3)} \end{pmatrix} 
    \begin{matrix} {\color{blue}\circ} \\ {\color{red}\to}  \end{matrix} \in \RR^{2 \times 2n} 
    \qquad \text{ and } \qquad
    M_{Y, {\square} } = \begin{pmatrix} Y^{(3)}  \end{pmatrix} \in \RR^{1 \times n} .
    \end{equation} 
\end{example}

\begin{example}\label{ex:augmentedSampleMatrix}
The RDAG model on the coloured DAG \begin{center}
\begin{tikzcd}[column sep = small,decoration={snake,amplitude=1pt}]
        & & {\color{blue}\circled{1}} & &\\
        \squared{3} \ar[rru, red, bend left = 30] \ar[rrd, orange, dashed, bend right = 30] & \squared{4} \ar[ru, Maroon, dotted] \ar[rd, OliveGreen, decorate] & \squared{5} \ar[u, OliveGreen, decorate] \ar[d, orange, dashed] & \squared{6} \ar[lu, Fuchsia, decorate, decoration={zigzag,amplitude=3pt}] \ar[ld, orange, dashed] & \squared{7} \ar[llu, Maroon, dotted, bend right = 30] \\
        & & {\color{blue}\circled{2}} & &
        \end{tikzcd}
        \, has \,
        $M_{Y,{\color{blue}\circ}} =         \begin{pmatrix}
            Y^{(1)} & Y^{(2)} \\ Y^{(3)} & 0 \\ 0 & Y^{(3)} + Y^{(5)} + Y^{(6)} \\ Y^{(5)} & Y^{(4)} \\ Y^{(6)} & 0 \\ Y^{(4)} + Y^{(7)} & 0
        \end{pmatrix}
        \begin{matrix}
            {\color{blue}\circ} \\ \begin{tikzcd}[cramped, sep = small] \ar[r, red] & \  \end{tikzcd} \\ \begin{tikzcd}[cramped, sep=small] \ar[r, orange, dashed] & \  \end{tikzcd} \\ \begin{tikzcd}[cramped, sep=small] \ar[r, OliveGreen, decorate, decoration={snake,amplitude=1.2pt}] & \  \end{tikzcd} \\ \begin{tikzcd}[cramped, sep=small] \ar[r, Fuchsia, decorate, decoration={zigzag,amplitude=3pt}] & \  \end{tikzcd} \\ \begin{tikzcd}[cramped, sep=small] \ar[r, Maroon, dotted] & \  \end{tikzcd}
        \end{matrix}$
\end{center} 
\end{example}

\begin{theorem}\label{thm:MLestimationLinDependence}
Consider the RDAG model on $(\Gcal,c)$ where colouring $c$ is compatible, 
and fix sample matrix $Y \in \RR^{m \times n}$. 
The following possibilities characterise maximum likelihood estimation given $Y$:
\[ \begin{matrix} \text{(a)} & \ell_Y \text{ unbounded from above} & \Leftrightarrow & M_{Y,s}^{(0)} \in \mathrm{span} \big\lbrace M_{Y,s}^{(i)} : i \in [\beta_s] \big\rbrace \text{ for some } s \in c(I) \\ 
\text{(b)} & \text{MLE exists} & \Leftrightarrow &  M_{Y,s}^{(0)} \notin \mathrm{span}  \big\lbrace M_{Y,s}^{(i)} : i \in [\beta_s] \big\rbrace \text{  for all } s \in c(I) \\
\text{(c)} & \text{MLE exists uniquely} &  \Leftrightarrow &  M_{Y,s} \text{ has full row rank for all } s \in c(I). \\ \end{matrix} \]
\end{theorem}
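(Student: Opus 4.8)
The plan is to reduce maximum likelihood estimation in the RDAG model to a collection of independent linear least squares problems, one for each vertex colour, and then read off the three cases from standard facts about least squares. Since the colouring is compatible, Proposition~\ref{prop:compatibleColouring} lets us work with $\Mcal_{A(\Gcal,c)}$, so by~\eqref{eqn:likelihood_e} we must analyse
\[
\ell_Y(a\T a) = \log\det(a\T a) - \tfrac{1}{n}\Vert a\cdot Y\Vert^2 = 2\sum_{i} \log|a_{ii}| - \tfrac1n \Vert aY\Vert_F^2,
\]
over $a \in A(\Gcal,c)$. The key structural observation is that this objective \emph{separates by vertex colour}: group the rows of $aY$ according to the colour $s$ of the corresponding vertex. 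A row of $a$ indexed by a vertex of colour $s$ has a diagonal entry equal to a single scalar $d_s$ (shared by all vertices of that colour, by compatibility) and off-diagonal entries that, by the colour constraints, are constant along each parent-relationship colour $t \in \prc(s)$; call these scalars $r_{s,t}$. Writing out $\Vert aY\Vert_F^2$ block by block shows that the contribution of colour $s$ depends only on $d_s$ and on $(r_{s,t})_{t\in\prc(s)}$, and equals $\Vert d_s\, M_{Y,s}^{(0)} + \sum_{t\in\prc(s)} r_{s,t}\, M_{Y,s}^{(t)} \Vert^2$ — this is exactly what the augmented sample matrix $M_{Y,s}$ of Definition~\ref{def:MYs} was built to encode. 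Meanwhile $\log\det(a\T a)$ splits as $2\sum_s \alpha_s \log|d_s|$. So $\ell_Y$ decomposes as a sum $\sum_{s\in c(I)} g_s(d_s, r_{s,\bullet})$ of independent problems, and the parameters $d_s$ must be nonzero (invertibility of $a$).

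Next I would analyse each summand
\[
g_s(d_s, r_{s,\bullet}) = 2\alpha_s \log|d_s| - \tfrac1n\Big\Vert d_s\, M_{Y,s}^{(0)} + \sum_{t} r_{s,t}\, M_{Y,s}^{(t)}\Big\Vert^2.
\]
By rescaling ($r_{s,t} \mapsto r_{s,t}/d_s$, which is legitimate since $d_s\neq 0$) one may pull $d_s$ out: the norm term becomes $d_s^2 \Vert M_{Y,s}^{(0)} + \sum_t r'_{s,t} M_{Y,s}^{(t)}\Vert^2$. For fixed $d_s$, maximising over $r'_{s,\bullet}$ is a linear least squares problem: the infimum of $\Vert M_{Y,s}^{(0)} - v\Vert^2$ over $v$ in the span $V_s := \mathrm{span}\{M_{Y,s}^{(i)} : i\in[\beta_s]\}$ equals the squared distance $\delta_s^2 := \mathrm{dist}(M_{Y,s}^{(0)}, V_s)^2$, attained, and attained uniquely in $v$ iff the $M_{Y,s}^{(i)}$ are linearly independent. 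Substituting, $\sup_{r'} g_s = 2\alpha_s\log|d_s| - \tfrac{1}{n} d_s^2 \delta_s^2$ as a function of $d_s\in\RR\setminus\{0\}$. If $\delta_s = 0$, i.e. $M_{Y,s}^{(0)}\in V_s$, this is $2\alpha_s\log|d_s|\to\infty$ as $|d_s|\to\infty$, so $g_s$ — and hence $\ell_Y$ — is unbounded above: this is case (a). If $\delta_s>0$ for all $s$, each $2\alpha_s\log|d_s| - \tfrac1n d_s^2\delta_s^2$ is bounded above and attains a unique maximum at $d_s^2 = n\alpha_s/\delta_s^2$ (two values $\pm d_s$, but they give the same $a\T a$, or one fixes positive diagonal as in the Cholesky normalisation); so the MLE exists, giving case (b). For (c): uniqueness of $\Psi = a\T a$ forces each block to be uniquely determined; the diagonal scalar $d_s^2$ is already pinned down, and the off-diagonal scalars are pinned down iff the least squares solution is unique iff $M_{Y,s}^{(1)},\ldots,M_{Y,s}^{(\beta_s)}$ are linearly independent — which, combined with $M_{Y,s}^{(0)}\notin V_s$ from (b), is equivalent to $M_{Y,s}$ having full row rank $\beta_s+1$. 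One should also check the converse direction of (c): if some $M_{Y,s}$ is row-rank-deficient, then either $\delta_s=0$ (no MLE at all, contradicting that we're in case (b)/(c)) or the off-diagonal parameters admit a positive-dimensional family of optimisers, producing distinct $\Psi$ in the model — here I need to confirm that different optimal $(r_{s,t})$ really do yield different concentration matrices $a\T a$, which follows because the relevant block of $a\T a$ (e.g. the row/column indexed by a child) is an injective affine image of $r_{s,\bullet}$ on the image of $M_{Y,s}$.

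The main obstacle is the bookkeeping in the decomposition step: verifying carefully that $\Vert aY\Vert_F^2$ splits cleanly into the claimed sum of $\Vert M_{Y,s}$-blocks, that the support constraints (a vertex of colour $s$ can only have parents along edges whose colour lies in $\prc(s)$, and an edge colour $t$ can be a parent-relationship colour of only one vertex colour $s$ — this uses compatibility condition~(ii)) guarantee that the parameters $r_{s,t}$ attached to distinct colour summands are genuinely distinct and unconstrained across summands, and that the top-row block structure of $M_{Y,s}$ correctly accounts for the diagonal $d_s$-terms. Once the separation is established and the constraint that the summands share no parameters is justified by compatibility, the remaining analysis of each one-colour subproblem is the routine least squares argument sketched above, and the three equivalences fall out by assembling the per-colour conclusions (unbounded if any summand is unbounded; bounded-and-attained if every summand is; unique iff every summand has a unique optimiser).
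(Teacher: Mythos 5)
Your proposal is correct and follows essentially the same route as the paper: use compatibility to pass to $\Mcal_{A(\Gcal,c)}$, decompose the log-likelihood into independent per-vertex-colour summands encoded by the matrices $M_{Y,s}$, and solve each summand as a least-squares problem followed by a one-variable optimisation. The only differences are cosmetic: you parametrise by the entries of $a$ (optimising $\alpha_s\log x - \gamma x$ in $x=d_s^2$) where the paper uses $(\Omega,\Lambda)$ (optimising $\alpha_s\log x + \gamma/x$ in $x=\omega_{ss}$), and the uniqueness bookkeeping you flag at the end is settled by Cholesky/LDL uniqueness, exactly as the paper implicitly does.
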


\begin{example}
\label{ex:running5}
For our running example \begin{tikzcd}[cramped, sep = small]
        {\color{blue}\circled{1}} & \squared{3} \ar[r, red] \ar[l, red] & {\color{blue}\circled{2}}
    \end{tikzcd}, Theorem~\ref{thm:MLestimationLinDependence} says that the MLE exists uniquely provided $Y^{(3)} \neq 0$ and $\begin{pmatrix} Y^{(1)} & Y^{(2)} \end{pmatrix}$ is not parallel to $\begin{pmatrix} Y^{(3)} & Y^{(3)} \end{pmatrix}$.
    This holds almost surely as soon as we have one sample, as we mentioned in Example~\ref{ex:very_first}.
\end{example}

\begin{example}
Returning to Example~\ref{ex:augmentedSampleMatrix}, the MLE given $Y$ exists provided $M_{Y, {\square} } = \begin{pmatrix} Y^{(3)} & \cdots & Y^{(7)} \end{pmatrix} \neq 0$, and $\begin{pmatrix} Y^{(1)} & Y^{(2)} \end{pmatrix}$ is not in the span of the other rows of $M_{Y,{\color{blue}\circ}}$. The MLE is unique if and only if $M_{Y,{\color{blue}\circ}}$ is full row rank, since this also implies $M_{Y, {\square} } \neq 0$.
\end{example}

The proof of Theorem~\ref{thm:MLestimationLinDependence} gives Algorithm~\ref{algorithm:the} for finding the MLE in an RDAG model with compatible colouring. The MLE is returned as entries of the matrices $\Lambda$ and $\Omega$. We give the MLE in a closed-form formula, as a collection of least squares estimators.

\begin{remark}
In Algorithm~\ref{algorithm:the}, the entries of $\Omega$ are given as
$\{ \omega_{ss}: s \in c(I) \}$.
The entries of $\Lambda$ are returned as $\{ \lambda_{s,t} : s \in c(I), t \in \prc(s)\}$, which equals the set of edge colours by compatibility, since edge colour $t$ only appears in $\prc(s)$ for one $s$. 
\end{remark}

\begin{algorithm}[h]
\label{algorithm:the} 
\SetAlgoLined
\SetKwInOut{Input}{input} 
\SetKwInOut{Output}{output}
\Input{A coloured DAG $(\Gcal,c)$ and sample matrix $Y$}
\Output{An MLE given $Y$ in the RDAG model on $(\Gcal,c)$, if one exists} 
 \For{$s \in c(I)$}{
 $\alpha_s:= |c^{-1}(s)|$\; 
 $\beta_s:= |\prc(s)|$\;
  construct matrix $M_{Y,s} \in \RR^{(\beta_s + 1) \times \alpha_s n}$\; 
  $P_{Y,s}:=$ orthogonal projection of $M_{Y,s}^{(0)}$ onto ${\rm span} \{ M_{Y,s}^{(t)} : t \in [\beta_s] \}$\;
  \eIf{$P_{Y,s} = M_{Y,s}^{(0)}$}{
   \Return{ MLE does not exist}\;
   }{
   coefficients $\lambda_{s,t}$ are such that $P_{Y,s} = \sum_{t \in \prc(s)} \lambda_{s,t} M_{Y,s}^{(t)}$\;
   $\omega_{ss} := (\alpha_s n)^{-1} \| P_{Y,s} - M_{Y,s}^{(0)}\|^2$\;
  }
 }
 \Return{MLE for $\Lambda$ and $\Omega$}
 \caption{Computing the MLE for an RDAG model}
\end{algorithm}

The proof of Theorem~\ref{thm:MLestimationLinDependence} directly gives a description of the set of MLEs.

\begin{corollary}
\label{cor:the_MLEs}
Consider the RDAG model on $(\Gcal,c)$ where colouring $c$ is compatible, 
with sample matrix $Y \in \RR^{m \times n}$. If $(\Lambda, \Omega)$ and $(\Lambda',\Omega')$ are two MLEs, then $\Omega = \Omega'$ and 
 $$\sum_{t \in \prc(s)} (\lambda_{s,t} - \lambda'_{s,t}) M_{Y,s}^{(t)} = 0, \qquad \text{for all} \quad s \in c(I).$$
\end{corollary}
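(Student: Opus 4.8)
The strategy is to prove Theorem~\ref{thm:MLestimationLinDependence} and Corollary~\ref{cor:the_MLEs} together by rewriting the log-likelihood in the $\Mcal_A$ form~\eqref{eqn:likelihood_e} and showing that it \emph{decouples} over vertex colours. Since the colouring $c$ is compatible, Proposition~\ref{prop:compatibleColouring} identifies the RDAG model with $\Mcal_{A(\Gcal,c)}$, so we may maximise $\ell_Y(a\T a) = \log\det(a\T a) - \frac{1}{n}\Vert a\cdot Y\Vert^2$ over $a \in A(\Gcal,c)$. Writing $\Psi = a\T a = (\id-\Lambda)\T\Omega^{-1}(\id-\Lambda)$, the determinant term is $\log\det(a\T a) = -\sum_{i} \log\omega_{ii} = -\sum_{s\in c(I)} \alpha_s \log\omega_{ss}$, which depends only on the vertex-colour variances. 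For the trace/norm term, $\Vert a \cdot Y\Vert^2 = \sum_{i=1}^m \omega_{ii}^{-1}\big\Vert Y^{(i)} - \sum_{j\in\pa(i)}\lambda_{ij}Y^{(j)}\big\Vert^2$; grouping the $m$ rows according to the colour $s = c(i)$ of vertex $i$, and using the colouring constraints $\omega_{ii}=\omega_{ss}$ and $\lambda_{ij}=\lambda_{s,c(ij)}$, each such sum over a fixed colour class becomes $\omega_{ss}^{-1}\big\Vert M_{Y,s}^{(0)} - \sum_{t\in\prc(s)}\lambda_{s,t}M_{Y,s}^{(t)}\big\Vert^2$. (This is exactly the point of Definition~\ref{def:MYs}: stacking the $\alpha_s$ row-residuals into one block vector of length $\alpha_s n$, and noting a parent-colour $t$ contributes $\sum_{j\to k,\,c(kj)=t}Y^{(j)}$ in the $k$-th block. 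Compatibility is what makes the substitution legitimate — it guarantees $\prc(s)$ is well-defined and the $\omega$'s are constant on a class.) Hence $\ell_Y$ splits as a sum $\sum_{s\in c(I)} \phi_s(\omega_{ss}, (\lambda_{s,t})_{t\in\prc(s)})$ of independent summands, one per colour.

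\textbf{Single-colour analysis.} For a fixed colour $s$, abbreviating $w := \omega_{ss}>0$, $N := \alpha_s n$, $v_0 := M_{Y,s}^{(0)}$, $v_t := M_{Y,s}^{(t)}$, and $V := \mathrm{span}\{v_t : t\in[\beta_s]\}$, the summand is
\begin{equation*}
\phi_s = -\alpha_s\log w - \tfrac{1}{n}\,w^{-1}\,\big\Vert v_0 - \textstyle\sum_t \lambda_{s,t}v_t\big\Vert^2 = -\alpha_s\log w - \tfrac{N}{n}\,w^{-1}\,\tfrac{1}{\alpha_s}\big\Vert v_0 - \textstyle\sum_t\lambda_{s,t}v_t\big\Vert^2.
\end{equation*}
First optimise over the $\lambda$'s with $w$ fixed: this is ordinary least squares, so the minimum of $\Vert v_0 - \sum_t\lambda_{s,t}v_t\Vert^2$ is $\Vert v_0 - P_{Y,s}\Vert^2$ where $P_{Y,s}$ is the orthogonal projection of $v_0$ onto $V$, attained exactly on the affine set of coefficient tuples with $\sum_t(\lambda_{s,t}-\lambda'_{s,t})v_t=0$. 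Now optimise over $w$: $g(w) = -\alpha_s\log w - c\,w^{-1}$ with $c := \frac1n\Vert v_0-P_{Y,s}\Vert^2 \ge 0$. If $v_0\in V$ then $c=0$ and $g(w)=-\alpha_s\log w\to+\infty$ as $w\to 0^+$, so $\phi_s$ — and hence $\ell_Y$ — is unbounded above; this gives case~(a). If $v_0\notin V$ then $c>0$, $g$ has a unique maximiser $w^\ast = c/\alpha_s$ (check $g'=0$, $g''<0$), which is exactly the formula $\omega_{ss} = (\alpha_s n)^{-1}\Vert P_{Y,s}-M_{Y,s}^{(0)}\Vert^2$ of Algorithm~\ref{algorithm:the}; so an MLE exists — case~(b) — with $\Omega$ determined uniquely and $\Lambda$ determined up to the least-squares kernel above. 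It is unique precisely when that kernel is trivial for every $s$, i.e. when the vectors $\{v_t : t\in[\beta_s]\}$ are linearly independent; combined with $v_0\notin V$ this says $\{v_t : t \in \{0\}\cup[\beta_s]\}$ are independent, i.e. $M_{Y,s}$ has full row rank — case~(c). Aggregating over $s$ yields Theorem~\ref{thm:MLestimationLinDependence}, and the description of the $\lambda$-ambiguity together with $\Omega=\Omega'$ gives Corollary~\ref{cor:the_MLEs} immediately.

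\textbf{Main obstacle.} The only genuinely delicate point is the bookkeeping in the decoupling step: verifying that, after imposing the colour constraints, the $m$-term residual sum really does reorganise into $\sum_s \omega_{ss}^{-1}\Vert M_{Y,s}^{(0)} - \sum_t\lambda_{s,t}M_{Y,s}^{(t)}\Vert^2$ with \emph{no} cross-colour interaction — this is where compatibility is used essentially (so that each edge colour $t$ belongs to $\prc(s)$ for a unique $s$, and each variance is constant on its class), and it requires care to match the block structure of $M_{Y,s}$ (block $k$ indexed by the $k$-th vertex of colour $s$, entry in row $t$ being $\sum_{j\to k,\,c(kj)=t}Y^{(j)}$) against the terms $\sum_{j\in\pa(i)}\lambda_{ij}Y^{(j)}$ grouped by $c(ij)$. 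Everything after that — the two one-variable calculus optimisations and the linear-algebra characterisation of uniqueness — is routine.
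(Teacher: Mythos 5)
Your proposal is correct and follows essentially the same route as the paper: the paper's proof of Theorem~\ref{thm:MLestimationLinDependence} performs exactly this decoupling of $-\ell_Y$ into per-colour summands $\alpha_s\log\omega_{ss}+\tfrac{1}{n\omega_{ss}}\Vert M_{Y,s}^{(0)}-\sum_t\lambda_{s,t}M_{Y,s}^{(t)}\Vert^2$, minimises the least-squares part first and then the scalar $\omega_{ss}$ via Lemma~\ref{lem:MinimumOfMinusLogLikelihood}, and the corollary is read off from the uniqueness of the projection (hence of $\Omega$) and the affine kernel of coefficient tuples. No gaps.
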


\subsection{Usual DAG models}

We recall the characterisation of MLE existence and uniqueness for usual Gaussian graphical models, as linear dependence conditions on the sample matrix.
For a DAG $\Gcal$ on $m$ nodes, and $n$ data samples, the sample matrix is $Y \in \RR^{m \times n}$.
For a node $i$ in $\Gcal$ we denote by $Y^{(i)}$ the $i$th row of $Y$, by $Y^{(\pa(i))}$ the sub-matrix of $Y$ with rows indexed by the parents of $i$ in $\Gcal$, and by $Y^{(\pa(i) \cup i)}$ the sub-matrix of $Y$ with rows indexed by node $i$ and its parents. 

\begin{theorem}\label{thm:DAG_Ymatrix}
Consider the DAG model on $\Gcal$, with $m$ nodes, and fix sample matrix $Y \in \RR^{m \times n}$. The following possibilities characterise maximum likelihood estimation given $Y$:
   \[ \begin{matrix} \text{(a)} & \ell_Y \text{ unbounded from above}  & \Leftrightarrow &  Y^{(i)} \in \mathrm{span} \big\lbrace Y^{(j)} : j \in \pa(i)  \big\rbrace \text{ for some } i \in [m] \\ 
\text{(b)} & \text{MLE exists}  & \Leftrightarrow &  Y^{(i)} \notin \mathrm{span} \big\lbrace Y^{(j)} : j \in \pa(i)  \big\rbrace \text{ for all } i \in [m] \\ 
\text{(c)} & \text{MLE exists uniquely} & \Leftrightarrow & Y^{(\pa(i) \cup i)} \text{ has full row rank for all } i \in [m]. \\ \end{matrix} \] 
\end{theorem}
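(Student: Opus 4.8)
The plan is to obtain Theorem~\ref{thm:DAG_Ymatrix} as the special case of Theorem~\ref{thm:MLestimationLinDependence} in which the colouring $c$ gives every vertex and every edge of $\Gcal$ its own distinct colour (the ``rainbow'' colouring). First I would check that this colouring is compatible in the sense of Definition~\ref{dfn:compatibleColoring}: vertex and edge colours are disjoint because all colours are distinct, and condition (ii) is vacuous since $c(ij)=c(kl)$ can only hold when $(j\to i)=(l\to k)$. Hence Theorem~\ref{thm:MLestimationLinDependence} applies to this coloured DAG, and it remains to see that for the rainbow colouring its data translates into the data in the statement of Theorem~\ref{thm:DAG_Ymatrix}.

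The second step is to unwind $\alpha_s,\beta_s,\prc(s)$ and $M_{Y,s}$ for the rainbow colouring. Each colour class $s=c(i)$ consists of the single vertex $i$, so $\alpha_s=1$ and every block appearing in $M_{Y,s}$ is a single row of $Y$. The parent relationship colours are $\prc(s)=\{c(ij): j\in\pa(i)\}$, one colour per parent, so $\beta_s=\lvert\pa(i)\rvert$; the row of $M_{Y,s}$ indexed by the colour $c(ij)$ is the one-term sum $Y^{(j)}$, and the top row $M_{Y,s}^{(0)}$ is $Y^{(i)}$. Thus $M_{Y,s}$ equals, up to reordering rows, the submatrix $Y^{(\pa(i)\cup i)}$, with $M_{Y,s}^{(0)}=Y^{(i)}$ and $\{M_{Y,s}^{(t)}:t\in[\beta_s]\}=\{Y^{(j)}:j\in\pa(i)\}$. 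Substituting this identification into parts (a), (b), (c) of Theorem~\ref{thm:MLestimationLinDependence} reproduces verbatim the three statements of Theorem~\ref{thm:DAG_Ymatrix}, where for part (c) one uses the elementary fact that $Y^{(\pa(i)\cup i)}$ has full row rank precisely when the rows $\{Y^{(j)}:j\in\pa(i)\}$ are linearly independent and $Y^{(i)}\notin\mathrm{span}\{Y^{(j)}:j\in\pa(i)\}$.

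As a self-contained alternative (and the idea underlying Theorem~\ref{thm:MLestimationLinDependence} itself), I would argue directly from the likelihood. Since $\id-\Lambda$ is unipotent, $\log\det\Psi=-\sum_i\log\omega_{ii}$, and expanding $\tr(\Psi S_Y)=\tfrac1n\lVert\Omega^{-1/2}(\id-\Lambda)Y\rVert^2$ row by row gives the decoupled expression
\[
\ell_Y(\Psi)=\sum_{i=1}^m\left(-\log\omega_{ii}-\frac1n\,\omega_{ii}^{-1}\left\Vert Y^{(i)}-\sum_{j\in\pa(i)}\lambda_{ij}Y^{(j)}\right\Vert^2\right),
\]
in which the $i$th summand depends only on the disjoint parameter block $(\omega_{ii},(\lambda_{ij})_{j\in\pa(i)})$. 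Maximising one summand over $\lambda$ is a least squares problem with (attained) minimal residual $\hat r_i^2:=\min_\lambda\lVert Y^{(i)}-\sum_{j\in\pa(i)}\lambda_{ij}Y^{(j)}\rVert^2$; maximising the resulting $-\log\omega_{ii}-\hat r_i^2/(n\omega_{ii})$ over $\omega_{ii}>0$ is bounded above with unique maximiser $\omega_{ii}=\hat r_i^2/n$ when $\hat r_i^2>0$, and is unbounded above (letting $\omega_{ii}\to0^+$ after setting $\lambda$ to a least squares optimum) when $\hat r_i^2=0$. As $\hat r_i^2=0$ iff $Y^{(i)}\in\mathrm{span}\{Y^{(j)}:j\in\pa(i)\}$, parts (a) and (b) follow by summing — holding the remaining parameter blocks at any fixed admissible value to promote summand-unboundedness to global unboundedness — and part (c) follows because the maximising $\omega_{ii}$ is always unique while the maximising $\lambda_{i\cdot}$ is unique iff $\{Y^{(j)}:j\in\pa(i)\}$ is linearly independent, which together with $\hat r_i^2>0$ is the full row rank condition on $Y^{(\pa(i)\cup i)}$ (consistent with Corollary~\ref{cor:the_MLEs}).

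I do not expect a serious obstacle here; the points needing care are: (i) checking that the rainbow colouring genuinely collapses $M_{Y,s}$ to $Y^{(\pa(i)\cup i)}$, in particular that every off-diagonal block sum reduces to a single row because the edge colours are distinct; (ii) in the direct argument, correctly lifting the per-vertex analysis to a global statement about $\ell_Y$ and handling the boundary behaviour of $\omega\mapsto-\log\omega-c/\omega$ in the two cases $c>0$ and $c=0$; and (iii) the (routine) linear-algebra equivalence between full row rank of $Y^{(\pa(i)\cup i)}$ and independence of the parent rows together with $Y^{(i)}$ lying outside their span.
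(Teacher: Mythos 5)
Your proposal is correct, and your primary route differs from the paper's. The paper proves Theorem~\ref{thm:DAG_Ymatrix} \emph{directly}, by decoupling the negative log-likelihood into per-vertex summands $\log\omega_{ii}+\tfrac{1}{n\omega_{ii}}\|Y^{(i)}-\sum_{j\in\pa(i)}\lambda_{ij}Y^{(j)}\|^2$, solving the least squares problem for $\lambda$, and analysing $\omega\mapsto\log\omega+\zeta/(n\omega)$ in the cases $\zeta=0$ and $\zeta>0$ --- precisely your ``self-contained alternative,'' which the paper presents first so that its generalisation becomes the proof of Theorem~\ref{thm:MLestimationLinDependence}. Your main route instead derives the DAG theorem as the rainbow-colouring specialisation of Theorem~\ref{thm:MLestimationLinDependence}; this is logically sound (no circularity: the proof of Theorem~\ref{thm:MLestimationLinDependence} rests on Proposition~\ref{prop:compatibleColouring} and Lemma~\ref{lem:MinimumOfMinusLogLikelihood}, not on Theorem~\ref{thm:DAG_Ymatrix}), your compatibility check is right, and your identification of $M_{Y,s}$ with $Y^{(\pa(i)\cup i)}$ up to row order is exactly the content of Remark~\ref{rmk:DAGvsRDAG}, which the paper states without detail. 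What the paper's ordering buys is pedagogy --- the uncoloured case motivates the construction of $M_{Y,s}$ --- while your route buys economy, since the general theorem must be proved anyway and the specialisation is a purely bookkeeping exercise. Both are complete; nothing is missing.
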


This result follows from viewing maximum likelihood estimation in a DAG as a sequence of regression problems. The acyclicity ensures that the sub-problems are uncoupled. We give a proof, so as to see its generalisation in Theorem~\ref{thm:MLestimationLinDependence}.

\begin{proof}
We denote the entries of the MLEs $\Lambda$ and $\Omega$ by $\hat{\lambda}_{ij}$ and $\hat{\omega}_{kk}$. 
The negative of the log-likelihood $\ell_Y$, in terms of the parameters $\omega_{ii}$ and $\lambda_{ij}$, is
$$ \sum_{i=1}^m \left( \log \omega_{ii} + \frac{1}{n \omega_{ii}} \| Y_i - \sum_{j \in \pa(i)} \lambda_{ij} Y_j \|^2 \right) .$$
We minimise the above expression.
Each parameter only appears in one summand. In the $i$th summand, the $\hat{\lambda}_{ij}$ always exist: they are coefficients of each $Y^{(j)}$ in the orthogonal projection of $Y^{(i)}$ onto the span of $\{ Y^{(j)} : j \in \pa(i)\}$. 
The $\hat{\lambda}_{ij}$ are unique if and only if $Y^{(\pa(i))}$ has full row rank. Let $\zeta_i$ be the residual $\| Y^{(i)} - \sum_{j \in \pa(i)} \hat{\lambda}_{ij} Y^{(j)} \|^2$. Let $\hat{\omega}_{ii}$ minimise $\log(\omega_{ii}) + \frac{\zeta_i}{n \omega_{ii}}$.
If $\zeta_i = 0$, equivalently if $Y^{(i)} \in \mathrm{span} \big\lbrace Y^{(j)} : j \in \pa(i)  \big\rbrace$, then in the limit $\omega_{ii} \to 0$, the log-likelihood tends to infinity and $\hat{\omega}_{ii}$ does not exist.
Otherwise, the minimum is attained uniquely at $\hat{\omega}_{ii} = \zeta_i / n$. Combining these cases gives the theorem.
\end{proof}

\subsection{Proof of Theorem~\ref{thm:MLestimationLinDependence}}

The proof of Theorem~\ref{thm:MLestimationLinDependence} will be similar to the proof for uncoloured models in Theorem~\ref{thm:DAG_Ymatrix}. We start by proving the following lemma.

\begin{lemma}\label{lem:MinimumOfMinusLogLikelihood}
Fix $\alpha > 0$ and, for $\gamma \geq 0$, consider the family of functions
    \[ f_{\gamma} \colon \RR_{>0} \to \RR, \quad x \mapsto \alpha \log(x) + \frac{\gamma}{x}.\]
\begin{itemize}\itemsep 3pt
    \item[(i)] If $\gamma = 0$, then $f_\gamma$ is neither bounded from below nor bounded from above.
        
    \item[(ii)] If $\gamma > 0$, then $f_{\gamma}$ attains a global minimum at $x_0 = \frac{\gamma}{\alpha}$ with function value $f_{\gamma}(\frac{\gamma}{\alpha}) = \alpha(\log(\gamma) - \log(\alpha) + 1)$. 
        
    \item[(iii)] Given $\gamma_1 \geq \gamma_2 > 0$, we have $f_{\gamma_1}(\frac{\gamma_1}{\alpha}) \geq f_{\gamma_2}(\frac{\gamma_2}{\alpha})$ at the global minima.
\end{itemize}
\end{lemma}

\begin{proof}
Part (i) follows from the properties of $\log$. To prove part~(ii), one computes $f_{\gamma}'(x) = \frac{\alpha}{x} - \frac{\gamma}{x^2}$ for $x > 0$. For $x>0$ we have
    \[ f'_{\gamma}(x) = 0  \quad \Leftrightarrow \quad
    \frac{\alpha}{x} = \frac{\gamma}{x^2} \quad \Leftrightarrow \quad
    \alpha x = \gamma \quad \Leftrightarrow \quad
    x = \frac{\gamma}{\alpha}.\]
Thus $x_0 := \frac{\gamma}{\alpha}$ is the only possible local extremum of $f_\gamma$. For $x>0$,
    \[ f'_{\gamma}(x) > 0  \quad \Leftrightarrow \quad
    \frac{\alpha}{x} > \frac{\gamma}{x^2} \quad \Leftrightarrow \quad
    \alpha x > \gamma \quad \Leftrightarrow \quad
    x > \frac{\gamma}{\alpha}.\]
and similarly one has $f'_{\gamma}(x) < 0$ if and only if $x < \frac{\gamma}{\alpha} = x_0$. Therefore, $x_0$ is a global minimum of $f_\gamma$. One directly verifies the function value for $f_{\gamma}(x_0)$, and so part~(iii) follows from the monotonicity of the logarithm.
\end{proof}

\begin{proof}[Proof of Theorem~\ref{thm:MLestimationLinDependence}]
Since colouring $c$ is compatible, the RDAG model equals $\Mcal_{A(\Gcal,c)}$, by Proposition~\ref{prop:compatibleColouring}. That is, for $\Psi = (\id - \Lambda)\T \Omega^{-1} (\id - \Lambda)$ in the RDAG model, the matrix $a = \Omega^{-1/2} (\id - \Lambda)$ is in $\Mcal_{A(\Gcal,c)}$ and satisfies $\Psi = a\T a$. As usual, let $\alpha_s := \vert c^{-1}(s) \vert$ and $\beta_s := \vert \prc(s) \vert$. The entry of $\Omega$ at vertex colour $s$ is denoted $\omega_{ss}$ and the edge colour entries of $\Lambda$ that point towards colour $s$ are labelled by $\lambda_{s,t}$, $t \in [\beta_s]$. Using the construction of the matrices $M_{Y,s} \in \RR^{(\beta_s +1) \times \alpha_s n}$ and that $\det(\id -\Lambda) = 1$, we compute
    \begin{align}
    \begin{split} 
    \label{eqn:finding_MLE} 
        - \ell_Y(\Psi) &= - \log \det(\Psi) + \tr(\Psi S_Y)
        = \log \det(\Omega) + \frac{1}{n} \|a \cdot Y\|^2 \\
        &= \log \bigg(\prod_{s \in c(I)} \omega_{ss}^{\alpha_s} \bigg) + \frac{1}{n} \sum_{s \in c(I)} \| \omega_{ss}^{-1/2} \big( M_{Y,s}^{(0)} - \sum_{t \in [\beta_s]} \lambda_{s,t} M_{Y,s}^{(t)} \big) \|^2 \\
        &= \sum_{s \in c(I)} \alpha_s \log(\omega_{ss}) + \frac{1}{n \omega_{ss}} \| M_{Y,s}^{(0)} - \sum_{t \in [\beta_s]} \lambda_{s,t} M_{Y,s}^{(t)} \|^2.
        \end{split}
    \end{align}
An MLE is a minimiser of the above expression. Each parameter occurs in exactly one of the summands over $s \in c(I)$, because the $\prc(s)$ partition the edge colours by compatibility. We therefore minimise each summand separately. By Lemma~\ref{lem:MinimumOfMinusLogLikelihood}(iii) we can first determine $\hat{\lambda}_{s,t}$, $t \in [\beta_s]$ that minimise
    \begin{equation}\label{eq:RDAGminUnipotentPart}
        \| M_{Y,s}^{(0)} - \sum_{t \in [\beta_s]} \lambda_{s,t} M_{Y,s}^{(t)} \|^2.
    \end{equation}
 Such $\hat{\lambda}_{s,t}$ always exist: they are coefficients in the orthogonal projection $P_{Y,s}$ of $M_{Y,s}^{(0)}$ onto $\mathrm{span} \big\lbrace M_{Y,s}^{(t)} : t \in [\beta_s] \big\rbrace$; i.e.,
    \[ P_{Y,s} = \sum_{t \in [\beta_s]} \hat{\lambda}_{s,t} M_{Y,s}^{(t)}. \]
Furthermore, $\hat{\lambda}_{s,t}$, $t \in [\beta_s]$ are unique if and only if the vectors $M_{Y,s}^{(t)}$, $t \in [\beta_s]$ are linearly independent. Denote the minimum value of \eqref{eq:RDAGminUnipotentPart} by $\zeta_s$. We will apply Lemma~\ref{lem:MinimumOfMinusLogLikelihood} several times with $\gamma_s := \zeta_s/n$.

If $M_{Y,s}^{(0)} \in \mathrm{span} \big\lbrace M_{Y,s}^{(t)} : t \in [\beta_s] \big\rbrace$ for some $s$, then $\zeta_s = 0$ and the summand $\alpha_s \log(\omega_{ss}) + \zeta_s/ (n \omega_{ss})$ is not bounded from below for $\omega_{ss} > 0$, by Lemma~\ref{lem:MinimumOfMinusLogLikelihood}(i). Hence $\ell_Y$ is not bounded from above, e.g. by setting $\omega_{s',s'} = 1$ and $\lambda_{s',t'} = 0$ for all $s' \in c(I)\setminus \{s\}$ and all $t' \in [\beta_{s'}]$. This proves ``$\Leftarrow$'' of (a).

If $M_{Y,s}^{(0)} \notin \mathrm{span} \big\lbrace M_{Y,s}^{(t)} : t \in [\beta_s] \big\rbrace$, equivalently $\zeta_s > 0$, then the summand $\alpha_s \log(\omega_{ss}) + \zeta_s/ (n\omega_{ss})$ has unique minimiser $\hat{\omega}_{ss} = \zeta_s /(n \alpha_s)$, by Lemma~\ref{lem:MinimumOfMinusLogLikelihood}(ii). Hence, an MLE exists if $\zeta_s > 0$ for all $s \in c(I)$, which proves ``$\Leftarrow$'' in (b). As the right-hand sides of (a) and (b) are opposites, we have proved (a) and (b).

Since the $\hat{\omega}_{ss}$ are uniquely determined (if they exist), an MLE (if it exists) is unique if and only if for all $s \in c(I)$ the vectors $M_{Y,s}^{(t)}$, $t \in [\beta_s]$ are linearly independent. In combination with the condition for MLE existence from part (b) we deduce (c).
\end{proof}

\begin{remark}[Comparison with usual DAG models]
\label{rmk:DAGvsRDAG}
The framework of RDAG models includes usual DAG models as a special case; namely, when each colour is used only once. In this case, Theorem~\ref{thm:MLestimationLinDependence} specialises to Theorem~\ref{thm:DAG_Ymatrix}.
We see in the next section that imposing colours in a DAG reduces the threshold number of samples required for existence and uniqueness of the MLE. 
\end{remark}
 
\subsection{Illustrative examples}
\label{sec:illustrative}

In this section we apply RDAG models to some small illustrative examples. We first apply our running example to model the effect of a mother's height on her two daughters' heights.

\begin{example}
\label{ex:heights}
The RDAG model on coloured graph \begin{tikzcd}[cramped, sep = small]
        {\color{blue}\circled{1}} & \squared{3} \ar[r, red] \ar[l, red] & {\color{blue}\circled{2}}
    \end{tikzcd} is given~by \[ y_1 = \lambda y_3 + \epsilon_1 , \quad y_2 = \lambda y_3 + \epsilon_2, \quad y_3 = \epsilon_3, \qquad \epsilon_1, \epsilon_2 \sim N(0,\omega), \quad \epsilon_3 \sim N(0,\omega'). \]
Let
variable $3$ be the height (in cm) of a woman and let variables $1$ and $2$ be, respectively, the heights of her younger and older daughter. Vertices $1$ and $2$ both being blue indicates that, conditional on the mother's height, the variance of the daughter's heights is the same. The edges both being red encodes that the dependence of a daughter's height on the mother's height is the same for both daughters.

 We saw in Example~\ref{ex:running5} that the MLE exists almost surely given one sample.
We use Algorithm~\ref{algorithm:the} to find the MLE,
given one sample where the the younger daughter's height is 159.75cm, the older daughter's height is 161.56, the mother's height is 155.32, and the population mean height is 163.83cm.
Mean-centring the data gives \[ \begin{pmatrix} Y^{(1)} & Y^{(2)} & Y^{(3)} \end{pmatrix} = \begin{pmatrix} 
    -4.08 & -2.27 & -8.51 \end{pmatrix} 
    .\]
    The only black vertex is $3$, and it has no parents, hence $\omega' = \| Y^{(3)} \|^2 = 72.42$. 
    The orthogonal projection of  $\begin{pmatrix} Y^{(1)} & Y^{(2)} \end{pmatrix}$ onto $\mathrm{span} \left\lbrace \begin{pmatrix} Y^{(3)} & Y^{(3)} \end{pmatrix} \right\rbrace$ has coefficient $\lambda = 0.37$ and residual $\omega = \big[ (-3.175+4.08)^2 + (-3.175+2.27)^2 \big]/2 = 0.82$.
    As we would expect, the regression coefficient $\lambda$ is positive and the variance of the daughters' heights conditional on the mother's height is lower than the variance of the mother's height.
\end{example}

We now consider multiple measurements taken in each generation.

\begin{example}
\label{ex:dog}
We consider measurements of the snout length and head length of dogs.
These are the first two of the 
seven morphometric parameters in the study of clinical measurements of dog breeds
in~\cite{momozawa2020genome}. We compare two RDAG models:
\begin{center}
 \begin{tikzcd}[column sep = small,decoration={snake,amplitude=1pt}]
 \squared{1} & {\color{Fuchsia}\triangled{5}}\ar[r,red] \ar[l,red] & \squared{3} \\ 
 {\color{blue}\circled{2}} & {\color{gray}\pentagoned{6}}\ar[r,OliveGreen,decorate]\ar[l,OliveGreen,decorate] & {\color{blue}\circled{4}} 
\end{tikzcd}
\qquad \text{vs.}
\qquad
 \begin{tikzcd}[column sep = small,decoration={snake,amplitude=1pt}]
 \squared{1} & {\color{Fuchsia}\triangled{5}}\ar[r,red] \ar[l,red]\ar[rd,dotted,Maroon]\ar[ld,dotted,Maroon] & \squared{3} \\ 
 {\color{blue}\circled{2}} & {\color{gray}\pentagoned{6}}\ar[r,OliveGreen,decorate]\ar[l,OliveGreen,decorate]\ar[ru,dashed,orange]\ar[lu,dashed,orange] & {\color{blue}\circled{4}} 
\end{tikzcd}
\end{center}
The black/square vertices 1 and 3 are the snout lengths of the two offspring. Blue/circular vertices 2 and 4 are their head lengths. The purple/triangular vertex 5 is the snout length of the parent and grey/pentagonal vertex 6 is the head length of the parent. The edges encode the dependence of the offsprings' traits on those of the parents.

Maximum likelihood estimation in the left hand model is two copies of Example~\ref{ex:heights}, one on the three odd variables, and one on the three even variables. Thus, given one sample a unique MLE exists almost surely. For the right hand model, Theorem~\ref{thm:MLestimationLinDependence} says that an MLE exists provided $Y^{(5)} \neq 0$, $Y^{(6)} \neq 0$  and neither $\begin{pmatrix} Y^{(1)} & Y^{(3)} \end{pmatrix}$ nor $\begin{pmatrix} Y^{(2)} & Y^{(4)} \end{pmatrix}$ are in $\mathrm{span} \left\lbrace \begin{pmatrix} Y^{(5)} & Y^{(5)} \end{pmatrix}, \begin{pmatrix} Y^{(6)} & Y^{(6)} \end{pmatrix} \right\rbrace$.
Hence an MLE exists almost surely with one sample. Moreover, the augmented sample matrices $M_{Y,{\color{blue}\circ}}$ and $M_{Y, {\square} }$ have full row rank almost surely provided $n \geq 2$, hence the MLE exists uniquely with two samples, by Theorem~\ref{thm:MLestimationLinDependence}.
\end{example} 

The vertex colours in an RDAG model could correspond to colours in an experiment, as follows. Fluorescent reporters can be used to take measurements in a cell. In~\cite{shao2021single}, the authors quantify data in a single living bacteria using fluorescent reporters in red, cyan, yellow, and green, see~\cite[Figure 2a]{shao2021single}. Given such measurements taken for a parent cell and its daughter cells, we could consider analogues of Example~\ref{ex:dog} in which, for example, the red fluorescence in a daughter cell only depends on the red fluorescence in the parent, or larger models in which there can also be dependence on the fluorescence of other colours.

\section{Maximum likelihood thresholds}

\label{sec:thresholds}

In the previous section we gave a characterisation of the existence and uniqueness of the MLE based on linear independence conditions. Here we turn this into results that depend only on the coloured graph, and that hold for a generic sample matrix. We give upper and lower bounds on the thresholds for almost sure existence and uniqueness of the MLE. The bounds hold whenever the sample matrix doesn't have extra linear dependencies among its rows. 

For a fixed number of samples, the MLE in an RDAG model may exist but not be unique almost surely, which cannot happen in an uncoloured model. In fact, Example~\ref{ex:ArbitraryLargeGap} gives a family of models for which the gap between the existence and uniqueness thresholds becomes arbitrarily large.

As well as assuming compatibility of the colouring, we often assume in this section that there are no edges between vertices of the same colour. Some of our bounds involve the following notion of generic rank.

\begin{definition}
\label{def:rs} 
Let $M_Y$ be a matrix whose entries are linear combinations of the entries of a matrix $Y$. 
The {\em generic rank} of $M_Y$ is its rank for generic $Y$.
\end{definition}

For $Y \in \RR^{m \times n}$, we often study the generic rank of $M_Y$ by considering it as a symbolic matrix whose entries are linear forms in the $mn$ indeterminates $Y_{ij}$.

\begin{example}\label{ex:ArbitraryLargeGap1}
The graph\footnote{with two vertex colours (blue/circular and black/square) and five edge colours (red/solid, orange/dashed, green/squiggly, purple/zigzag, and brown/dotted)} 
    \begin{center}
        \begin{tikzcd}[column sep = small,decoration={snake,amplitude=1pt}]
        & & {\color{blue}\circled{1}} & &\\
        \squared{3} \ar[rrd, red, bend right = 30] \ar[rru, red, bend left = 30] & \squared{4} \ar[rd, orange,dashed] \ar[ru, orange,dashed] & \squared{5} \ar[d, OliveGreen,decorate] \ar[u, OliveGreen,decorate] & \squared{6} \ar[ld, Fuchsia,decorate,decoration={zigzag,amplitude=3pt}] \ar[lu, Fuchsia,decorate,decoration={zigzag,amplitude=3pt}] & \squared{7} \ar[llu, Maroon, bend right = 30,dotted] \ar[lld, Maroon, bend left = 30,dotted] \\
        & & {\color{blue}\circled{2}} & &
        \end{tikzcd}
        \qquad \text{has} \qquad 
    $   M_{Y,{\color{blue}\circ}} = \begin{pmatrix}
        Y^{(1)} & Y^{(2)} \\ Y^{(3)} & Y^{(3)} \\
        Y^{(4)} & Y^{(4)} \\ Y^{(5)} & Y^{(5)} \\
        Y^{(6)} & Y^{(6)} \\
        Y^{(7)} & Y^{(7)}
        \end{pmatrix}
        \begin{matrix}
            {\color{blue}\circ} \\ \begin{tikzcd}[cramped, sep = small] \ar[r, red] & \  \end{tikzcd} \\ \begin{tikzcd}[cramped, sep=small] \ar[r, orange, dashed] & \  \end{tikzcd} \\ \begin{tikzcd}[cramped, sep=small] \ar[r, OliveGreen, decorate, decoration={snake,amplitude=1.2pt}] & \  \end{tikzcd} \\ \begin{tikzcd}[cramped, sep=small] \ar[r, Fuchsia, decorate, decoration={zigzag,amplitude=3pt}] & \  \end{tikzcd} \\ \begin{tikzcd}[cramped, sep=small] \ar[r, Maroon, dotted] & \  \end{tikzcd}
        \end{matrix}$
    \end{center}
    When $n=1$, the matrix $M_{Y,{\color{blue}\circ}}$ has generic rank two. Removing its top row gives a $5 \times 2$ matrix of generic rank one.
    \end{example} 

Define the augmented sample matrix $M_{Y,s} \in \RR^{(\beta_s + 1) \times \alpha_s n}$ as in Definition~\ref{def:MYs}.
We let $M'_{Y,s} \in \RR^{\beta_s \times \alpha_s n}$ be obtained from $M_{Y,s}$ by removing its top row. 
As before, $\alpha_s$ is the number of vertices of colour $s$ and $\beta_s$ is the number of edge colours of edges to a vertex of colour $s$. Let $r_s$ be the generic rank of $M'_{Y,s}$ when $n=1$. Let $\mlt_e$ (resp. $\mlt_u$) denote the minimal number of samples needed for almost sure existence (resp. uniqueness) of the MLE.

\begin{theorem}\label{thm:RDAGboundsMlt}
Consider the RDAG model on $(\Gcal,c)$ where colouring $c$ is compatible, and $(\Gcal,c)$ has no edges between vertices of the same colour.
We have the following bounds on the thresholds $\mlt_e$ and $\mlt_u$
    \begin{align}
        \label{eq:BoundsMltExistence}
        \max_s \left\lfloor \frac{r_s-1}{\alpha_s-1} \right\rfloor + 1  &\leq \mlt_e \leq \max_s  \left\lfloor \frac{\beta_s}{\alpha_s} \right\rfloor + 1 , \\[5pt]
        \max_s \left\lfloor \frac{\beta_s}{\alpha_s} \right\rfloor + 1 &\leq \mlt_u \leq \max_s \left( \beta_s + 2 - r_s \right). \label{eq:BoundsMltUniqueness}
    \end{align}
\end{theorem}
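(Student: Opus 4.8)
The strategy is to translate the threshold bounds into statements about the generic ranks of the augmented matrices $M_{Y,s}$ and $M'_{Y,s}$, using the characterisation in Theorem~\ref{thm:MLestimationLinDependence}. Recall that for a sample matrix $Y \in \RR^{m\times n}$, the MLE exists iff $M_{Y,s}^{(0)} \notin \mathrm{span}\{M_{Y,s}^{(t)} : t \in [\beta_s]\}$ for all $s$, and is unique iff each $M_{Y,s}$ has full row rank, i.e. rank $\beta_s+1$. Since these are the conditions that must hold for \emph{generic} $Y$, I would first record how the generic rank of $M_{Y,s}$ (and of $M'_{Y,s}$, its top-row deletion) grows with $n$. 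The key structural observation, exploited throughout, is that $M_{Y,s}$ consists of $\alpha_s$ blocks of width $n$, one block per vertex of colour $s$, and because there are no edges between same-coloured vertices the $\alpha_s$ blocks involve disjoint sets of the indeterminate rows $Y^{(j)}$ appearing as ``parent'' entries — so stacking more samples multiplies the available ``directions'' by $\alpha_s$. Concretely: for $n$ samples, the generic rank of $M'_{Y,s}$ is $\min(\beta_s, \alpha_s n \cdot (\text{something}))$, and more usefully, the generic rank of $M'_{Y,s}$ at $n$ samples equals $\min(\beta_s, r_s \cdot n)$ where $r_s$ is the $n=1$ generic rank, provided the blocks genuinely decouple — this ``tensorisation'' of generic rank under stacking independent sample columns is the technical lemma I expect the paper to isolate.

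Granting that, the four bounds fall out:

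\begin{itemize}
\item[(i)] \emph{Upper bound on $\mlt_e$:} The MLE exists generically as soon as, for every $s$, the top row $M_{Y,s}^{(0)}$ escapes the span of the other $\beta_s$ rows. A crude sufficient condition is that $M_{Y,s}$ has full row rank $\beta_s+1$; generically this requires $\alpha_s n \geq \beta_s+1$, i.e. $n \geq \lceil (\beta_s+1)/\alpha_s\rceil = \lfloor \beta_s/\alpha_s\rfloor + 1$. Taking the max over $s$ gives $\mlt_e \leq \max_s \lfloor \beta_s/\alpha_s\rfloor + 1$.
\item[(ii)] \emph{Lower bound on $\mlt_u$:} Conversely, uniqueness \emph{requires} each $M_{Y,s}$ to have full row rank $\beta_s+1$, which forces $\alpha_s n \geq \beta_s+1$ for each $s$, hence $n \geq \max_s(\lfloor\beta_s/\alpha_s\rfloor+1)$, giving the stated lower bound on $\mlt_u$.
\item[(iii)] \emph{Lower bound on $\mlt_e$:} If $r_s\cdot n < \beta_s+1$, then $M'_{Y,s}$ still has rank $<\beta_s+1$, but that alone doesn't prevent the top row from escaping its span; the genuine obstruction is subtler. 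The bound $\mlt_e \geq \max_s \lfloor (r_s-1)/(\alpha_s-1)\rfloor + 1$ should come from: if $n$ is too small, the rank of $M'_{Y,s}$ over $n$ samples is $\min(\beta_s, r_s n)$, and the rank of $M_{Y,s}$ is $\min(\beta_s+1, r_s n + (n))$ — the $+n$ being the extra generic dimension the top row contributes. When is $M_{Y,s}^{(0)}$ \emph{forced} into the row span of $M'_{Y,s}$? One argues that with $n' < n$ samples, a dimension count on the ``missing'' generic directions (the $\alpha_s$-fold decoupling versus the $r_s$ directions in $M'$, and $1$ in $M^{(0)}$) forces collapse, yielding the floor $\lfloor (r_s-1)/(\alpha_s-1)\rfloor$. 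I would make this precise by writing the row space of $M_{Y,s}$ at $n$ samples as a direct sum over blocks and counting.
\item[(iv)] \emph{Upper bound on $\mlt_u$:} Uniqueness holds once $M_{Y,s}$ has full row rank for all $s$. Since the generic rank of $M'_{Y,s}$ jumps by $\min(r_s,\beta_s-\mathrm{rk}_{\mathrm{prev}})$ each time we add a sample, after $n$ samples its rank is $\min(\beta_s, r_s n)$; adding the top row, $M_{Y,s}$ has rank $\min(\beta_s+1, \dots)$. Full rank $\beta_s+1$ is reached by $n = \beta_s + 2 - r_s$: the first $\beta_s+1-r_s$ samples (using only $M'$'s growth of $r_s$ each... hmm) — more carefully, one needs enough samples that $\beta_s+1$ independent rows appear, and a greedy/inductive count gives $n \leq \beta_s+2-r_s$. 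Max over $s$ gives the upper bound on $\mlt_u$.
\end{itemize}

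\textbf{Main obstacle.} The crux is the behaviour of \emph{generic rank under stacking samples}: proving that $\mathrm{rk}(M'_{Y,s})$ at $n$ samples is exactly $\min(\beta_s, r_s n)$, and analogously pinning down the generic rank of the full $M_{Y,s}$ relative to its submatrix $M'_{Y,s}$ (so that the ``$+1$ per sample'' contribution of the top row is genuinely independent). The no-edges-between-same-colour hypothesis is what makes the $\alpha_s$ blocks share no parent-indeterminates, so that the symbolic matrix $M_{Y,s}$ for $n$ samples is, up to permutation, a block arrangement whose rank one can analyse by picking a generic evaluation — but care is needed because the \emph{top} row $M_{Y,s}^{(0)}$ does tie the blocks together through the shared column index set, and its entries $Y^{(i)}$ for vertices $i$ of colour $s$ are fresh indeterminates not appearing elsewhere. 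I would handle this by exhibiting, for the relevant $n$, an explicit specialisation of $Y$ (e.g. rows of $M'$ set to coordinate-like patterns block by block, top row generic) realising the claimed rank, and invoking semicontinuity of rank to conclude it is the generic value. Getting the floor functions in (iii) exactly right — rather than off by one — is where the bookkeeping is most delicate, and I expect that is where the real work lies.
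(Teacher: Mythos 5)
Your overall reduction---work colour by colour, translate existence and uniqueness into generic-rank statements about $M_{Y,s}$ and its top-row deletion $M'_{Y,s}$, and exploit that the no-same-colour-edges hypothesis makes the entries of $M_{Y,s}^{(0)}$ fresh indeterminates---is indeed the paper's strategy, and your item (ii) is exactly the paper's lower bound for $\mlt_u$. But your item (i) is logically backwards: you argue that full row rank of $M_{Y,s}$ is sufficient for existence and that full row rank \emph{requires} $\alpha_s n\ge\beta_s+1$, and then conclude $\mlt_e\le\lfloor\beta_s/\alpha_s\rfloor+1$. A necessary condition for a sufficient condition yields no upper bound on a threshold; moreover the implicit premise that $M_{Y,s}$ generically attains full row rank once $\alpha_s n\ge\beta_s+1$ is false (in Example~\ref{ex:ArbitraryLargeGap1} full row rank needs $n=5$, not $n=3$). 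The correct argument is that the rows of $M'_{Y,s}$ span a subspace of $\RR^{\alpha_s n}$ of dimension at most $\beta_s$, and since the top-row entries $Y^{(i)}$, $i$ of colour $s$, appear in no other row, a generic top row avoids that proper subspace as soon as $\beta_s<\alpha_s n$.

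The deeper gap is the rank-growth lemma you flag as the crux. Your proposed formula $\rk(M'_{Y,s})=\min(\beta_s,r_s n)$ is unproven and, even if granted, would not produce the stated lower bound on $\mlt_e$: the top row is forced into the row span of $M'_{Y,s}$ only when that span is all of $\RR^{\alpha_s n}$, and $\min(\beta_s,r_s n)\ge\alpha_s n$ reduces to $r_s\ge\alpha_s$ together with $\beta_s\ge\alpha_s n$, which does not give the floor $\lfloor(r_s-1)/(\alpha_s-1)\rfloor$. What the paper actually uses is that the generic rank of the $1\times n$ blow-up is a concave function of $n$ (Derksen--Makam), which combined with saturation at $n=\beta_s$ (Lemma~\ref{lem:HelpMLTsUniqueness}, proved by exhibiting an explicit invertible $\beta_s\times\beta_s$ submatrix, close in spirit to the specialisation you sketch) gives $\rk(M'_{Y,s})\ge\min(r_s+n-1,\beta_s)$: additive growth of at least one per extra sample. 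Feeding this into the ``trapped top row'' mechanism gives $n\le\lfloor(r_s-1)/(\alpha_s-1)\rfloor$ for non-existence (with $\alpha_s=1$ treated separately; your formula divides by zero there), and it also shows $M'_{Y,s}$ reaches full row rank by $n=\beta_s+1-r_s$, after which the top-row-independence argument yields the upper bound on $\mlt_u$, with one extra sample in the boundary case $\beta_s=\alpha_s n$. Without the concavity input and the correct forcing mechanism, your items (i), (iii) and (iv) do not go through.
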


It remains an open problem to turn the bounds in Theorem~\ref{thm:RDAGboundsMlt} into formulae for $\mlt_e$ and $\mlt_u$. 
\begin{problem}
\label{prob:1}
Determine the maximum likelihood thresholds $\mlt_e$ and $\mlt_u$ of an RDAG model, as formulae involving properties of the graph $\Gcal$ and its colouring $c$.
\end{problem}

Note that the upper bounds for existence and uniqueness are both at most $\max_s \beta_s +1$, which is the threshold for uniqueness in the (uncoloured) DAG case, see~\cite{drton2019maximum}. Hence the RDAG thresholds are always at least as small as the DAG threshold.

\begin{example}\label{ex:ArbitraryLargeGap}
We find the existence and uniqueness thresholds for the RDAG on the graph in Example~\ref{ex:ArbitraryLargeGap1}. 
The black (square) vertices have no children, so the matrix $M_{Y,\square}$ is full rank as soon as $n \geq 1$. The thresholds are therefore determined by the matrix $M_{Y,{\color{blue}\circ}}$. The generic rank of $M'_{Y,{\color{blue}\circ}}$ is one when $n=1$, i.e. $r_{\color{blue}\circ} = 1$. Theorem~\ref{thm:RDAGboundsMlt} and Proposition~\ref{prop:boundMltsUniqueness} then give bounds
    \begin{align*}
        \frac{r_{\color{blue}\circ}-1}{\alpha_{\color{blue}\circ}-1} + 1 = 1 \leq \mlt_e
        \qquad \text{ and } \qquad
        \mlt_u \leq \beta_{\color{blue}\circ} + 1 - r_{\color{blue}\circ} = 5 + 1 - 1 = 5.
    \end{align*}
In fact, both bounds are attained, as follows. When $n=1$, the row $M_{Y,{\color{blue}\circ}}^{(0)} = (Y^{(1)}, Y^{(2)})$ is almost surely not contained in the span of the other rows of $M_{Y,{\color{blue}\circ}}$, hence $\mlt_e = 1$. On the other hand, we need at least $n=5$ samples for generic linear independence of the rows $(Y^{(3)}, Y^{(3)}), \ldots, (Y^{(7)}, Y^{(7)})$.

This example extends to an arbitrary number of vertices, i.e. the graph with $k+2$ vertices, $2$ blue/circular and $k$ black/square, and $2k$ edges of $k$ colours (arranged as in the $k=5$ case above).
Repeating the above argument gives $\mlt_e = 1$ and $\mlt_u = k$.
\end{example}

\begin{remark}
\label{rmk:no_edges_same_colour}
Theorem~\ref{thm:RDAGboundsMlt} applies to all RDAG models with compatible colouring that are equal to some RCON model, because such models never have edges between vertices of the same colour, as follows.
Take some vertex $i$ of minimal index with $i \leftarrow j$ in~$\Gcal$ having $c(i) = c(j)$. Then no children of $i$ have colour $c(i)$, therefore $\Gcal_i \neq \Gcal_j$, a contradiction to Theorem~\ref{thm:RCONequalsRDAG}(b).
\end{remark}

We modify the edges from Examples~\ref{ex:ArbitraryLargeGap1} and~\ref{ex:ArbitraryLargeGap} to see how the thresholds change.

\begin{example}\label{ex:RDAGsThresholds}
Consider the following DAGs, both with compatible colouring
    \begin{center}
        \begin{tikzcd}[column sep = small, decoration={snake,amplitude=1pt}]
        & & {\color{blue}\circled{1}} & &\\
        \squared{3} \ar[rru, red, bend left = 30] & \squared{4} \ar[rd, Maroon, dotted] \ar[ru, orange, dashed] & \squared{5} \ar[u, OliveGreen, decorate] & \squared{6} \ar[lu, Fuchsia, decorate, decoration={zigzag,amplitude=3pt}] & \squared{7} \ar[llu, Maroon, dotted, bend right = 30] \\
        & & {\color{blue}\circled{2}} & &
        \end{tikzcd}
        \qquad \qquad
        \begin{tikzcd}[column sep = small, decoration={snake,amplitude=1pt}]
        & & {\color{blue}\circled{1}} & &\\
        \squared{3} \ar[rru, red, bend left = 30] \ar[rrd, orange, dashed, bend right = 30] & \squared{4} \ar[ru, Maroon, dotted] \ar[rd, OliveGreen, decorate] & \squared{5} \ar[u, OliveGreen, decorate] \ar[d, orange, dashed] & \squared{6} \ar[lu, Fuchsia, decorate, decoration={zigzag,amplitude=3pt}] \ar[ld, orange, dashed] & \squared{7} \ar[llu, Maroon, dotted, bend right = 30] \\
        & & {\color{blue}\circled{2}} & &
        \end{tikzcd}
    \end{center}
Given sample matrix $Y \in \RR^{7 \times n}$, we respectively obtain 
    \begin{equation*}
        M_{Y,{\color{blue}\circ}} = 
        \begin{pmatrix}
            Y^{(1)} & Y^{(2)} \\ Y^{(3)} & 0 \\ Y^{(4)} & 0 \\ Y^{(5)} & 0 \\ Y^{(6)} & 0 \\ Y^{(7)} & Y^{(4)}
        \end{pmatrix}
        \begin{matrix}
            {\color{blue}\circ} \\ \begin{tikzcd}[cramped, sep = small] \ar[r, red] & \  \end{tikzcd} \\ \begin{tikzcd}[cramped, sep=small] \ar[r, orange, dashed] & \  \end{tikzcd} \\ \begin{tikzcd}[cramped, sep=small] \ar[r, OliveGreen, decorate, decoration={snake,amplitude=1.2pt}] & \  \end{tikzcd} \\ \begin{tikzcd}[cramped, sep=small] \ar[r, Fuchsia, decorate, decoration={zigzag,amplitude=3pt}] & \  \end{tikzcd} \\ \begin{tikzcd}[cramped, sep=small] \ar[r, Maroon, dotted] & \  \end{tikzcd}
        \end{matrix}
        \qquad \qquad
        M_{Y,{\color{blue}\circ}} = 
        \begin{pmatrix}
            Y^{(1)} & Y^{(2)} \\ Y^{(3)} & 0 \\ 0 & Y^{(3)} + Y^{(5)} + Y^{(6)} \\ Y^{(5)} & Y^{(4)} \\ Y^{(6)} & 0 \\ Y^{(4)} + Y^{(7)} & 0
        \end{pmatrix}
        \begin{matrix}
            {\color{blue}\circ} \\ \begin{tikzcd}[cramped, sep = small] \ar[r, red] & \  \end{tikzcd} \\ \begin{tikzcd}[cramped, sep=small] \ar[r, orange, dashed] & \  \end{tikzcd} \\ \begin{tikzcd}[cramped, sep=small] \ar[r, OliveGreen, decorate, decoration={snake,amplitude=1.2pt}] & \  \end{tikzcd} \\ \begin{tikzcd}[cramped, sep=small] \ar[r, Fuchsia, decorate, decoration={zigzag,amplitude=3pt}] & \  \end{tikzcd} \\ \begin{tikzcd}[cramped, sep=small] \ar[r, Maroon, dotted] & \  \end{tikzcd}
        \end{matrix}
    \end{equation*}
In both cases we have $\alpha_{\color{blue}\circ} = 2$, $\beta_{\color{blue}\circ} = 5$, and $r_{\color{blue}\circ} = 2$. Thus, Theorem~\ref{thm:RDAGboundsMlt} gives the bounds
    \begin{align*}
        2 = \left\lfloor \frac{r_{\color{blue}\circ}-1}{\alpha_{\color{blue}\circ}-1} \right\rfloor + 1 \leq \mlt_e \leq \left\lfloor \frac{\beta_{\color{blue}\circ}}{\alpha_{\color{blue}\circ}} \right\rfloor + 1 = 3
        \\
        3 = \left\lfloor \frac{\beta_{\color{blue}\circ}}{\alpha_{\color{blue}\circ}} \right\rfloor + 1 \leq \mlt_u \leq \beta_{\color{blue}\circ} + 2 - r_{\color{blue}\circ} = 5.
    \end{align*}
In fact, Proposition~\ref{prop:boundMltsUniqueness} gives an upper bound of $4$ on $\mlt_u$, since $r_{\color{blue}\circ} \neq \beta_{\color{blue}\circ} + 1 - (\beta_{\color{blue}\circ} / \alpha_{\color{blue}\circ})$.

First, we study the left-hand RDAG. When $n=2$ the row $Y^{(2)} \in \RR^{1 \times 2}$ is generically not in the span of $Y^{(4)}$, hence $M_{Y,{\color{blue}\circ}}^{(0)} = (Y^{(1)}, Y^{(2)})$ is not in the linear span of the other five rows of $M_{Y,{\color{blue}\circ}}$, and we deduce $\mlt_e = 2$. For generic uniqueness of an MLE we need $M_{Y,{\color{blue}\circ}} \in \RR^{6 \times 2n}$ to have full row rank six. For $n \geq 2$, the submatrix
    \begin{align*}
        \begin{pmatrix}
            Y^{(2)} \\ Y^{(4)}
        \end{pmatrix} \in \RR^{2 \times n}
    \end{align*}
generically has rank two. Therefore, $M_{Y,{\color{blue}\circ}}$ has rank at most five if $n = 3$, but $n=4$ suffices for $M_{Y,{\color{blue}\circ}}$ generically having full row rank. We conclude $\mlt_u = 4$.

Next, we study the right-hand RDAG. For $n=2$, $M_{Y,{\color{blue}\circ}}^{(0)} = (Y^{(1)}, Y^{(2)})$ is generically contained in the linear span of the other rows of $M_{Y,{\color{blue}\circ}}$. From the bounds we conclude that $\mlt_e = 3$. For uniqueness, when $n = 3$ the submatrices
    \begin{align*}
        \begin{pmatrix}
            Y^{(3)} \\ Y^{(6)} \\ Y^{(4)} + Y^{(7)}
        \end{pmatrix},
        \quad
        \begin{pmatrix}
            Y^{(2)} \\ Y^{(3)} + Y^{(5)} + Y^{(6)} \\ Y^{(4)}
        \end{pmatrix} \in \RR^{3 \times 3}
    \end{align*}
of $M_{Y,{\color{blue}\circ}}$ generically have rank three, and the zero pattern then ensures that $M_{Y,{\color{blue}\circ}}$ has full row rank six. Combining with the lower bound $3 \leq \mlt_u$ gives $\mlt_u = 3$.
\end{example}

\subsection{Proof of Theorem~\ref{thm:RDAGboundsMlt}}

For fixed vertex colour $s$, we define $\mlt_e(s)$ to be the smallest $n$ such that the top row of $M_{Y,s}$ is almost surely not in the span of the other $\beta_s$ rows, and define $\mlt_u(s)$ to be the smallest $n$ such that the matrix $M_{Y,s}$ is almost surely of full row rank $\beta_s + 1$. 
To prove Theorem~\ref{thm:RDAGboundsMlt}, we use the following lemma.

\begin{lemma}\label{lem:HelpMLTsUniqueness}
Consider the RDAG model on $(\Gcal,c)$ where colouring $c$ is compatible, and fix a vertex colour~$s$. For $n \geq \beta_s$ and generic $Y \in \RR^{m \times n}$ the row vectors  $M_{Y,s}^{(1)}, \ldots, M_{Y,s}^{(\beta_s)}$ are linearly independent.
\end{lemma}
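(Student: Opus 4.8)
The plan is to show that the $\beta_s$ rows $M_{Y,s}^{(1)},\dots,M_{Y,s}^{(\beta_s)}$ are linearly independent for at least one choice of $Y$; since linear independence is a Zariski-open condition on the entries of $Y$ (the non-vanishing of some $\beta_s\times\beta_s$ minor of the symbolic matrix $M'_{Y,s}$), a single witness suffices to conclude genericity. So I would exhibit an explicit $Y\in\RR^{m\times n}$ with $n=\beta_s$ for which these rows are independent; the same $Y$ padded with extra columns works for $n>\beta_s$, since appending columns cannot decrease rank.

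Here is the construction. Recall that row $M_{Y,s}^{(t)}$, for $t\in\prc(s)$, is the concatenation over the $\alpha_s$ vertices $k$ of colour $s$ of the blocks $\sum_{k\leftarrow j,\, c(kj)=t} Y^{(j)}\in\RR^n$. The key combinatorial fact I would use is that for \emph{each} colour $t\in\prc(s)$ there is at least one vertex $k$ of colour $s$ and at least one parent $j$ with $c(kj)=t$ — this is just the definition of $\prc(s)$. Pick such a witnessing pair $(k_t,j_t)$ for each $t$; note the source vertices $j_t$ need not be distinct, and a given $j_t$ may feed several colours, which is exactly why the argument needs care. Index the $n=\beta_s$ sample columns by the colours $t\in\prc(s)$, and set the sample matrix so that row $Y^{(j)}$, for each source vertex $j$ appearing as some $j_t$, is a prescribed $0/1$ pattern, while rows $Y^{(j)}$ of vertices never used as a source are set to zero. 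Concretely, I would try: in the column indexed by $t$, put the value $1$ in coordinate $j$ precisely when $c(k_t j)=t$ for the chosen child $k_t$ — i.e.\ engineer, for each colour $t$, the block of $M_{Y,s}^{(t)}$ at vertex $k_t$ to be the standard basis vector $e_t\in\RR^{\beta_s}$, while simultaneously arranging that for colours $t'\ne t$ the block of $M_{Y,s}^{(t')}$ at vertex $k_t$ does not involve column $t$. If this can be done, then the $\alpha_s n\times\beta_s$ matrix $(M'_{Y,s})^{\mathsf T}$ contains, after selecting the appropriate $\beta_s$ coordinates (one per colour $t$, namely the $k_t$-block's $t$-th entry), an identity submatrix, so $M'_{Y,s}$ has rank $\beta_s$.

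The main obstacle is the potential \emph{sharing of source vertices across colours}: a single row $Y^{(j)}$ of the sample matrix is used simultaneously in the blocks for every colour $t$ with $c(kj)=t$ for some $k$, and setting $Y^{(j)}$ to make one block a basis vector constrains the others. Compatibility of the colouring is what I expect to resolve this: condition (ii) forces all edges of colour $t$ to have children of a single colour, and combined with the hypothesis (elsewhere in the section) that there are no edges between vertices of the same colour, a source vertex $j$ of colour $\ne s$ cannot simultaneously be a target of colour $s$, which decouples the linear system. If residual coupling remains, I would instead argue more robustly by computing the rank of the symbolic matrix directly: order the colours $t_1,\dots,t_{\beta_s}$ and show that for a generic diagonal-type choice (each source row $Y^{(j_t)}$ supported on column $t$, plus generic perturbations) the matrix $M'_{Y,s}$ is block-triangular with nonzero diagonal blocks after a suitable permutation of rows and columns, hence full row rank. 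Either way, the essential point — and the only place the bound $n\ge\beta_s$ and compatibility are really needed — is that $\beta_s$ columns suffice to separate the $\beta_s$ rows, one column dedicated to each parent relationship colour.
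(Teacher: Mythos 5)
Your overall strategy is essentially the paper's: reduce to exhibiting one invertible $\beta_s\times\beta_s$ submatrix of $M'_{Y,s}$ by taking, for each colour $t\in\prc(s)$, the column sitting in the block of a witnessing child $k_t$ at sample index $t$ (the paper keeps $Y$ symbolic and shows the diagonal monomial of that submatrix's determinant cannot cancel, since each variable $Y_{j,t}$ occurs in at most one entry). The problem with your write-up is that you leave the decisive verification conditional (``if this can be done'') and then misdiagnose both the obstacle and its resolution. The coupling you worry about is real, but it only affects coordinates you never select: with your choice $Y_{j,t}=1$ iff $c(k_tj)=t$, the block of $M_{Y,s}^{(t)}$ at $k_t$ is in general \emph{not} a multiple of $e_t$ — its $t'$-th coordinate counts common parents $j$ with $c(k_tj)=t$ and $c(k_{t'}j)=t'$. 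What you actually need is only the second half of your conjunction, and it holds unconditionally: the entry of the selected submatrix in row $t'$ and column $(k_t,t)$ is $\sum_{j\,:\,c(k_tj)=t'}Y_{j,t}=\#\{\,j : c(k_tj)=t' \text{ and } c(k_tj)=t\,\}$, which vanishes for $t'\neq t$ because each edge $j\to k_t$ carries a single colour, and is positive for $t'=t$. Hence the submatrix is diagonal with positive diagonal, and the lemma follows by Zariski-openness as you say.

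In particular, neither compatibility nor the absence of edges between same-coloured vertices (which is not a hypothesis of this lemma, only of the surrounding threshold results) plays any role here, so your appeal to them does not ``decouple'' anything. Your fallback — dedicating source row $Y^{(j_t)}$ to column $t$ and perturbing — should be dropped: as you yourself observe, the $j_t$ need not be distinct, and when $j_t=j_{t'}$ that assignment is inconsistent. Replace the hedged middle of the argument with the one-line computation above and the proof is complete and matches the paper's.
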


\begin{proof}
We think of the $mn$ entries of $Y$ as indeterminates. Let $M_{Y,s}' \in \RR^{\beta_s \times \alpha_s n}$ be the matrix with rows $M_{Y,s}^{(1)}, \ldots, M_{Y,s}^{(\beta_s)}$. We construct an invertible $\beta_s \times \beta_s$ submatrix of $M_{Y,s}'$.

Without loss of generality, let $1,2,\ldots,\alpha_s$ be the vertices of colour $s$. The matrix $M_{Y,s}'$ has $\alpha_s$ many $\beta_s \times n$ blocks. For each parent relationship colour $p_t$, $t \in [\beta_s]$ there is some vertex $i = i(t) \in [\alpha_s]$ such that there is an edge of colour $p_t$ pointing towards vertex $i = i(t)$. That is, the $i^{th}$ block of $M_{Y,s}'$ has non-zero entries in the $t^{th}$ row. Let $C_t$ be the $t^{th}$ column of that block, which exists as $n \geq \beta_s$. By construction, the $t^{th}$ entry of $C_t$ is non-zero. We show that the matrix $C = \begin{pmatrix} C_{1} & C_2 & \ldots & C_{\beta_s}\end{pmatrix}$, is invertible.

An entry of $C$ is either a sum of variables or it is zero. By construction, column $C_t$ only contains (sums of) elements of the $t^{th}$ column of $Y$. The same variable $Y_{j,t}$ cannot occur in two different entries of $C_t$, because there is at most one edge from $j$ to vertex $i(t)$. Altogether, the entries of $C$ are (possible empty) sums of variables and each variable occurs in at most one entry of $C$. The determinant is an alternating sum over products of permutations, and it is enough to show that one product is non-zero. By construction,  $C_{11} C_{22} \cdots C_{\beta_s \beta_s} \neq 0$. Thus, $M_{Y,s}'$ has rank $\beta_s$ for $n \geq \beta_s$.
\end{proof}

\begin{proposition}\label{prop:boundMltsExistence}
Consider the RDAG model on $(\Gcal,c)$ where colouring $c$ is compatible,
with no edges between any vertices of colour $s$.
If $\alpha_s = 1$, then $\mlt_e(s) = \beta_s + 1$, while
if $\alpha_s \geq 2$ we have
$$ \left\lfloor \frac{r_s-1}{\alpha_s-1} \right\rfloor + 1  \leq \mlt_e(s) \leq \left\lfloor \frac{\beta_s}{\alpha_s} \right\rfloor + 1 .$$
\end{proposition}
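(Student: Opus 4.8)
The plan is to reduce the whole statement to a comparison between the generic rank of $M'_{Y,s}$ and the number $\alpha_s n$ of its columns. For $n\ge 1$, write $\rho_s(n)$ for the generic rank of the symbolic matrix $M'_{Y,s}\in\RR^{\beta_s\times\alpha_s n}$, so that $\rho_s(1)=r_s$, always $\rho_s(n)\le\beta_s$ and $\rho_s(n)\le\alpha_s n$, and $\rho_s$ is non-decreasing in $n$. The first step---and the only nonroutine one, I expect---is to prove
\[
\mlt_e(s)=\min\bigl\{\,n\ge 1 \ :\ \rho_s(n)<\alpha_s n\,\bigr\}.
\]
This is where the hypothesis of no edges between vertices of colour $s$ enters. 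By Definition~\ref{def:MYs}, the row $M_{Y,s}^{(0)}$ is the concatenation of the $\alpha_s$ rows of $Y$ indexed by the colour-$s$ vertices, whereas every entry of $M'_{Y,s}$ is a linear form in rows of $Y$ indexed by \emph{parents} of colour-$s$ vertices; by hypothesis no such parent has colour $s$, so $M_{Y,s}^{(0)}$ and $M'_{Y,s}$ involve disjoint (and unconstrained) sets of coordinates of $Y$. Fixing the non-colour-$s$ rows of $Y$ generically makes $\mathrm{rank}(M'_{Y,s})=\rho_s(n)$ and fixes its row space $R\subseteq\RR^{\alpha_s n}$; then, as the colour-$s$ rows vary, $M_{Y,s}^{(0)}$ sweeps out all of $\RR^{\alpha_s n}$. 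Hence $M_{Y,s}^{(0)}\notin R$ can occur iff $\dim R=\rho_s(n)<\alpha_s n$, in which case it occurs generically, and $M_{Y,s}^{(0)}\in R$ holds identically when $\rho_s(n)=\alpha_s n$. (That ``$M_{Y,s}^{(0)}\notin\mathrm{span}\{M_{Y,s}^{(t)}\}$'' is generically either always or never true is standard: on the Zariski-open locus where $\mathrm{rank}(M'_{Y,s})$ attains its generic value, its negation is cut out by the vanishing of all $(\rho_s(n)+1)\times(\rho_s(n)+1)$ minors of $M_{Y,s}$, hence is Zariski closed there.) Since $\rho_s(n)\le\beta_s<\alpha_s n$ once $n$ is large, the minimising set is non-empty, giving the displayed formula.

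From this formula the three claims follow by short computations. For the \emph{upper bound}, $\rho_s(n)\le\beta_s$ for all $n$, so at $n=\lfloor\beta_s/\alpha_s\rfloor+1$ we have $\alpha_s n>\beta_s\ge\rho_s(n)$, whence $\mlt_e(s)\le\lfloor\beta_s/\alpha_s\rfloor+1$; for $\alpha_s=1$ this reads $\mlt_e(s)\le\beta_s+1$. In the \emph{case $\alpha_s=1$} we would complete the argument by showing $\rho_s(n)=\min(n,\beta_s)$: for $n\ge\beta_s$ this is Lemma~\ref{lem:HelpMLTsUniqueness}, and for $n<\beta_s$ the disjoint-support argument from that lemma's proof equally yields a generically invertible $n\times n$ submatrix of $M'_{Y,s}$, so $\rho_s(n)=n$. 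Then $\rho_s(n)=n=\alpha_s n$ for every $n\le\beta_s$, so $\mlt_e(s)\ge\beta_s+1$, and with the upper bound $\mlt_e(s)=\beta_s+1$. For the \emph{lower bound when $\alpha_s\ge 2$}: at $n=1$ the matrix $M'_{Y,s}$ has only $\alpha_s$ columns, so $r_s\le\alpha_s$ and hence $\lfloor(r_s-1)/(\alpha_s-1)\rfloor+1\le 2$, with equality precisely when $r_s=\alpha_s$. If $r_s<\alpha_s$ the asserted bound is $1$, which is trivial; if $r_s=\alpha_s$, then $\rho_s(1)=r_s=\alpha_s\cdot 1$, so $n=1$ violates the condition in the displayed formula and therefore $\mlt_e(s)\ge 2$.

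The real obstacle is the first step, $\mlt_e(s)=\min\{n:\rho_s(n)<\alpha_s n\}$: it requires combining the disjointness forced by the no-monochromatic-edge hypothesis with the standard ``generic behaviour is constant'' bookkeeping---the openness of the relevant rank condition, and the legitimacy of fixing the non-colour-$s$ part of $Y$ first. Everything after that is elementary; in particular, the asserted lower bound is at most $2$ for $\alpha_s\ge 2$, which is exactly why it only has to be checked at $n=1$.
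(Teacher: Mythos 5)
Your proof is correct, and for the lower bound it takes a genuinely different and simpler route than the paper. The shared core is the disjointness forced by the no-monochromatic-edge hypothesis: the paper uses it only in one direction (to get the upper bound, by noting a generic top row avoids the span of $\beta_s<\alpha_s n$ rows), whereas you upgrade it to the clean two-sided characterisation $\mlt_e(s)=\min\{n:\rho_s(n)<\alpha_s n\}$, which then makes all three claims into arithmetic. The real divergence is in the lower bound for $\alpha_s\ge 2$: the paper establishes $\rho_s(n)\ge\min(r_s+n-1,\beta_s)$ by invoking the weak concavity of the rank of the $1\times n$ blow-up (citing Derksen--Makam) together with Lemma~\ref{lem:HelpMLTsUniqueness}, and then still has to appeal forward to Proposition~\ref{prop:boundMltsUniqueness} to discard one branch of a minimum. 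Your observation that $r_s\le\alpha_s$ forces $\lfloor(r_s-1)/(\alpha_s-1)\rfloor+1\le 2$, so that the bound only ever needs to be verified at $n=1$ where it is immediate from $\rho_s(1)=r_s$, eliminates both the blow-up concavity input and the cross-reference; it also makes transparent that the paper's stated lower bound is never better than $2$ once $\alpha_s\ge 2$. Your treatment of $\alpha_s=1$ (showing $\rho_s(n)=\min(n,\beta_s)$ by extracting an $n\times n$ submatrix with pairwise disjoint variable supports, exactly as in the proof of Lemma~\ref{lem:HelpMLTsUniqueness}) is a legitimate filling-in of the paper's appeal to ``the uncoloured case.'' The only thing the paper's heavier machinery buys is the intermediate estimate $\rho_s(n)\ge\min(r_s+n-1,\beta_s)$, which is reused in the proof of Proposition~\ref{prop:boundMltsUniqueness}; your argument does not reproduce that estimate, but it is not needed for the statement at hand.
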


\begin{proof}
If $\alpha_s = 1$, the equality $\mlt_e(s) = \beta_s + 1$ is known from the uncoloured case. It remains to consider $\alpha_s \geq 2$.
For the upper bound, we show that if $n > \frac{\beta_s}{\alpha_s}$, then the top row of $M_{Y,s}$ is generically not in the span of the other rows. Since there are no edges between two vertices of colour $s$, the $n \alpha_s$ entries of the top row $M_{Y,s}^{(0)}$ are all independent, from each other and from the entries of the other rows. If $\beta_s < \alpha_s n$, the other $\beta_s$ rows do not span $\RR^{n \alpha_s}$, so a generic choice of top row will not lie in their span. 

For the lower bound, consider the $\beta_s \times \alpha_s$ matrix $M_{Y,s}'$ for $n=1$. Its generic rank is denoted $r_s$. We consider the $1 \times n$ matrix blow up, where the scalar variables $Y^{(i)}$ are replaced by generic row vectors of length $n$, to give a $\beta_s \times \alpha_s n$ matrix. We consider the rank of this matrix as $n$ increases. The maximum rank is $\beta_s$, which occurs for $n \geq \beta_s$ by Lemma~\ref{lem:HelpMLTsUniqueness}. Moreover, the rank is a (weakly) concave function in $n$ \cite[Corollary~2.8]{derksen2017polynomial}. Since the rank is positive integer valued, it cannot be the same at two distinct $n$ unless it is at its maximum. Hence the rank for fixed $n$ is at least $\min ( r_s + n-1, \beta_s)$. Therefore, the top row is in the span of the others whenever 
$$
\min(r_s + n -1, \beta_s) \geq n \alpha_s , \quad \text{in particular, whenever} \quad \alpha_s n \leq r_s + n - 1 \leq \beta_s ,$$
i.e. for 
$n \leq \min \left( \left\lfloor\frac{r_s-1}{\alpha_s-1}\right\rfloor , \beta_s + 1 - r_s \right)$, so $\mlt_e(s) \geq \min \left( \left\lfloor\frac{r_s-1}{\alpha_s-1}\right\rfloor + 1 , \beta_s + 2 - r_s \right)$.
We exclude the possibility that the smaller of the two arguments in the minimum is $\beta_s + 2 - r_s$ by appealing to Proposition~\ref{prop:boundMltsUniqueness}. 
\end{proof}

\begin{proposition}\label{prop:boundMltsUniqueness}
Consider the RDAG model on $(\Gcal,c)$ where colouring $c$ is compatible, 
with no edges between any vertices of colour $s$. Then
    \[\left\lfloor \frac{\beta_s}{\alpha_s} \right\rfloor + 1 \leq \mlt_u(s) \leq \beta_s + 2 - r_s. \]
Moreover, if $r_s \neq \beta_s + 1 - (\beta_s / \alpha_s)$ then $\mlt_u(s) \leq \beta_s + 1 - r_s$.
\end{proposition}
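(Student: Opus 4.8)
The plan is to reduce both bounds to understanding when the augmented matrix $M_{Y,s}\in\RR^{(\beta_s+1)\times\alpha_s n}$ becomes generically of full row rank. Let $M'_{Y,s}$ denote $M_{Y,s}$ with its top row $M_{Y,s}^{(0)}$ removed. For fixed $n$, the matrix $M_{Y,s}$ has full row rank $\beta_s+1$ if and only if (i) $M'_{Y,s}$ has full row rank $\beta_s$, and (ii) $M_{Y,s}^{(0)}$ does not lie in the row span of $M'_{Y,s}$. The hypothesis that $(\Gcal,c)$ has no edges between vertices of colour $s$ is exactly what makes (ii) tractable: the source of any edge into a colour-$s$ vertex cannot itself have colour $s$, so the $\alpha_s n$ entries of $M_{Y,s}^{(0)}$ are precisely the $\alpha_s n$ indeterminates indexed by the colour-$s$ vertices, and none of those indeterminates occurs anywhere in $M'_{Y,s}$. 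Hence, for a specialisation of $Y$ at which $M'_{Y,s}$ already has full row rank, varying the top-row entries generically places $M_{Y,s}^{(0)}$ outside the row span of $M'_{Y,s}$ as soon as that span is a proper subspace of $\RR^{\alpha_s n}$, i.e. as soon as $\alpha_s n > \beta_s$. Since the generic rank of $M'_{Y,s}$ is non-decreasing in $n$ (restrict to a subset of columns) and, by Lemma~\ref{lem:HelpMLTsUniqueness}, attains its maximum value $\beta_s$ for $n\ge\beta_s$, there is a least $n^\ast$ with $M'_{Y,s}$ generically of full row rank, and this holds for all $n\ge n^\ast$. Combining, $\mlt_u(s)=\max\big(n^\ast,\ \lfloor\beta_s/\alpha_s\rfloor+1\big)$, where the second term is the least $n$ with $\alpha_s n>\beta_s$.

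Next I would bound $n^\ast$ by the concavity argument already used for the lower bound in Proposition~\ref{prop:boundMltsExistence}: replacing each scalar variable $Y^{(i)}$ by a generic length-$n$ row vector, the generic rank of $M'_{Y,s}$ is a weakly concave, integer-valued function of $n$ by \cite[Corollary~2.8]{derksen2017polynomial}, equal to $r_s$ at $n=1$ and equal to $\beta_s$ for $n\ge\beta_s$ by Lemma~\ref{lem:HelpMLTsUniqueness}; hence it is at least $\min(r_s+n-1,\beta_s)$ for every $n$, so it equals $\beta_s$ once $n\ge\beta_s+1-r_s$. Therefore $n^\ast\le\beta_s+1-r_s$.

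It then remains to combine the two ingredients. The lower bound $\lfloor\beta_s/\alpha_s\rfloor+1\le\mlt_u(s)$ is immediate, since a matrix with $\beta_s+1$ rows and fewer than $\beta_s+1$ columns cannot have rank $\beta_s+1$. For the upper bound, note $r_s\le\min(\alpha_s,\beta_s)$, as $r_s$ is the generic rank of a $\beta_s\times\alpha_s$ matrix; a short case split on whether $\lfloor\beta_s/\alpha_s\rfloor$ is $\le 1$ or $\ge 2$ shows $\min(\alpha_s,\beta_s)\le\beta_s+1-\lfloor\beta_s/\alpha_s\rfloor$, so $\lfloor\beta_s/\alpha_s\rfloor+1\le\beta_s+2-r_s$; together with $n^\ast\le\beta_s+1-r_s\le\beta_s+2-r_s$ this yields $\mlt_u(s)\le\beta_s+2-r_s$. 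For the ``moreover'' clause, the same case analysis shows that $\min(\alpha_s,\beta_s)\le\beta_s+1-\lfloor\beta_s/\alpha_s\rfloor$ is an equality only when $\alpha_s\mid\beta_s$, in which case $\lfloor\beta_s/\alpha_s\rfloor=\beta_s/\alpha_s$ and the value in question is exactly $\beta_s+1-\beta_s/\alpha_s$; hence if $r_s\ne\beta_s+1-\beta_s/\alpha_s$ then in fact $r_s\le\beta_s-\lfloor\beta_s/\alpha_s\rfloor$, i.e. $\lfloor\beta_s/\alpha_s\rfloor+1\le\beta_s+1-r_s$, so $\mlt_u(s)=\max(n^\ast,\lfloor\beta_s/\alpha_s\rfloor+1)\le\beta_s+1-r_s$.

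The structural work — the characterisation $\mlt_u(s)=\max(n^\ast,\lfloor\beta_s/\alpha_s\rfloor+1)$ together with the concavity bound $n^\ast\le\beta_s+1-r_s$ — is essentially forced by the tools already in place, and the proof uses nothing from Proposition~\ref{prop:boundMltsExistence}, so there is no circularity. The one genuinely fiddly point, and the main obstacle, is the elementary number-theoretic bookkeeping in the last paragraph: verifying $\min(\alpha_s,\beta_s)\le\beta_s+1-\lfloor\beta_s/\alpha_s\rfloor$ and pinning down exactly when equality holds, so that the exceptional case lines up precisely with the hypothesis $r_s=\beta_s+1-\beta_s/\alpha_s$ in the ``moreover'' statement.
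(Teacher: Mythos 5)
Your argument is correct and follows the paper's proof in all essentials: the lower bound by counting columns, the concavity bound $\rk(M'_{Y,s}) \geq \min(r_s+n-1,\beta_s)$ giving full row rank of $M'_{Y,s}$ at $n = \beta_s+1-r_s$, and the independence of the top row from the rest (via the no-same-colour-edges hypothesis) to escape the span once $\alpha_s n > \beta_s$. The only difference is organisational: you package the claim as $\mlt_u(s)=\max\big(n^\ast,\lfloor\beta_s/\alpha_s\rfloor+1\big)$ and verify the arithmetic inequality $r_s\le\min(\alpha_s,\beta_s)\le\beta_s+1-\lfloor\beta_s/\alpha_s\rfloor$ (with its equality case) explicitly, whereas the paper reads off $\beta_s\le\alpha_s n$ directly from the fact that the $\beta_s\times\alpha_s n$ matrix $M'_{Y,s}$ has full row rank at $n=\beta_s+1-r_s$ and then splits into the cases $\beta_s<\alpha_s n$ and $\beta_s=\alpha_s n$.
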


\begin{proof}
For the lower bound, we observe that if $\alpha_s n \leq \beta_s$, the $\beta_s + 1$ rows of $M_{Y,s}$ will be linearly dependent. Hence, we need $n > \frac{\beta_s}{\alpha_s}$.

For the upper bound, let $M'_{Y,s}$ and $r_s$ be as above. Recall from the proof of Proposition~\ref{prop:boundMltsExistence} that, for $n$ samples, $\rk(M_{Y,s}') \geq \min(r_s + n - 1, \beta_s)$ generically.
Thus, for $n = \beta_s + 1 - r_s$ the matrix $M'_{Y,s}$ generically has full row rank $\beta_s$.
It remains to consider the top row of $M_{Y,s}$. We must have $\beta_s \leq \alpha_s n$, otherwise the $\beta_s \times \alpha_s n$ matrix could not have full row rank. We look separately at the possible cases: $\beta_s < n \alpha_s$ and $\beta_s = n \alpha_s$.
If $\beta_s < n \alpha_s$, the row vector $M_{Y,s}^{(0)} \in \RR^{n \alpha_s}$ is generically not in the span of the $\beta_s$ rows of $M'_{Y,s}$, because there are no edges between vertices of colour $s$. Thus, $M_{Y,s}$ generically has full row rank $\beta_s + 1$, and $\mlt_u(s) \leq n = \beta_s + 1 - r_s$. If $\beta_s = n \alpha_s$, equivalently if $r_s = \beta_s + 1 - (\beta_s / \alpha_s)$, an additional sample ensures $\rk(M_{Y,s}) = \beta_s + 1$ generically.
\end{proof}

\begin{proof}[Proof of Theorem~\ref{thm:RDAGboundsMlt}]
We have $\mlt_e = \max_s \mlt_e(s)$ and $\mlt_u = \max_s \mlt_u(s)$ by Theorem~\ref{thm:MLestimationLinDependence} parts~(b) and (c).
Taking the maximum of the lower and upper bounds in Propositions~\ref{prop:boundMltsExistence} and~\ref{prop:boundMltsUniqueness}, over all vertex colours, gives the stated bounds.
\end{proof} 

\subsection{A randomised algorithm}

\begin{proposition}\label{prop:RDAGcomputingMlt}
For an RDAG model on $(\Gcal,c)$, where colouring $c$ is compatible, there is a randomised algorithm for computing the thresholds $\mlt_e$ and $\mlt_u$.
\end{proposition}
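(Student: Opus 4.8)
The plan is to reduce the computation of $\mlt_e$ and $\mlt_u$ to finitely many \emph{generic rank} questions, each of which is settled by one random substitution followed by exact Gaussian elimination, and then to control the error probability with the Schwartz--Zippel lemma. The starting point is Theorem~\ref{thm:MLestimationLinDependence}(b) and (c). Let $\mlt_e(s)$ be the least $n$ such that $M_{Y,s}^{(0)}$ is generically not in the span of $M_{Y,s}^{(1)},\dots,M_{Y,s}^{(\beta_s)}$, and let $\mlt_u(s)$ be the least $n$ such that $M_{Y,s}\in\RR^{(\beta_s+1)\times\alpha_s n}$ generically has full row rank $\beta_s+1$; then $\mlt_e=\max_{s\in c(I)}\mlt_e(s)$ and $\mlt_u=\max_{s\in c(I)}\mlt_u(s)$.

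Two observations make the search for these thresholds finite. First, the two conditions are monotone in $n$: deleting a column of $Y$ cannot destroy a linear dependence among the rows of $M_{Y,s}$, so once a condition holds for generic $Y\in\RR^{m\times n}$ it holds for generic $Y\in\RR^{m\times(n+1)}$. Second, for $n=\beta_s+1$ the matrix $M_{Y,s}$ already has full row rank generically: the rows $M_{Y,s}^{(1)},\dots,M_{Y,s}^{(\beta_s)}$ are linearly independent by Lemma~\ref{lem:HelpMLTsUniqueness}, and a putative relation $\sum_{t=0}^{\beta_s} c_t M_{Y,s}^{(t)}=0$, read off in the block of $M_{Y,s}$ indexed by a colour-$s$ vertex $i$, forces $c_0=0$ because the entries of $Y^{(i)}$ are indeterminates occurring in no row $Y^{(j)}$ with $j\neq i$ and $\Gcal$ has no loops. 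Hence $\mlt_e(s)\le\mlt_u(s)\le\beta_s+1$, so both $\mlt_e$ and $\mlt_u$ are at most $B:=\max_{s\in c(I)}(\beta_s+1)$, a quantity read directly off $(\Gcal,c)$. (When there are no edges between vertices of the same colour, Theorem~\ref{thm:RDAGboundsMlt} gives a sharper $B$, but this crude bound suffices.)

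The algorithm then does the following. The entries of $M_{Y,s}$ and of $M'_{Y,s}$ are linear forms in the $mn$ variables $Y_{ij}$, so every $k\times k$ minor is a polynomial of degree at most $k\le\beta_s+1\le B$, and the generic rank of either matrix equals its maximal rank, realised wherever a fixed minor of that size does not vanish. Fix a target error $\delta\in(0,1)$ and a finite set $\Gamma\subset\QQ$ with $|\Gamma|>2B^2\,|c(I)|/\delta$. For $n=1,2,\dots,B$, draw the entries of $Y$ independently and uniformly from $\Gamma$ and compute $\rk M_{Y,s}$ and $\rk M'_{Y,s}$ for every $s\in c(I)$. Return, as the estimate of $\mlt_e$, the smallest $n$ at which $\rk M_{Y,s}>\rk M'_{Y,s}$ holds for all $s$, and, as the estimate of $\mlt_u$, the smallest $n$ at which $\rk M_{Y,s}=\beta_s+1$ holds for all $s$; by the second observation above, both are attained for some $n\le B$.

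Correctness follows from Schwartz--Zippel: for fixed $n$ and $s$, the event that the random point fails to realise the generic rank of $M_{Y,s}$ (or of $M'_{Y,s}$) is contained in the event that a fixed nonzero polynomial of degree at most $B$ vanishes there, which has probability at most $B/|\Gamma|$; a union bound over the at most $2B\,|c(I)|$ rank computations performed keeps the total failure probability below $\delta$, and off that event every computed rank equals the generic rank, so the returned pair is exactly $(\mlt_e,\mlt_u)$. I expect the only step requiring genuine argument to be the a priori bound $B$ — the second observation above — since it is what guarantees that the loop terminates with the right answer; monotonicity in $n$, the degree count for the minors, and Schwartz--Zippel are all routine. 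If a Las Vegas guarantee is preferred, one can instead carry out the whole scan over $\QQ$ and then re-certify the winning value of $n$ and its predecessor by an exact symbolic rank computation.
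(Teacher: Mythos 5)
Your proposal is correct and follows essentially the same route as the paper: reduce to each vertex colour $s$, identify $\mlt_e(s)$ as the least $n$ with $\rk M_{Y,s} > \rk M'_{Y,s}$ and $\mlt_u(s)$ as the least $n$ with $\rk M_{Y,s} = \beta_s+1$, compute these generic ranks by random evaluation, and terminate via the bound $\beta_s+1$. You merely spell out details the paper delegates to citations or asserts without proof (the Schwartz--Zippel error analysis and the justification that $M_{Y,s}$ generically has full row rank once $n\ge\beta_s+1$, via Lemma~\ref{lem:HelpMLTsUniqueness} and the no-loops observation), and these added details are sound.
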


\begin{proof}
It suffices to give a randomised algorithm to compute $\mlt_e(s)$ and $\mlt_u(s)$ for fixed vertex colour $s$.
The rank of a symbolic matrix can be computed (efficiently) by a randomised algorithm, see e.g. \cite{Lovasz, Schwartz}. Hence, thinking of the entries of $Y \in \RR^{m \times n}$ as indeterminates, we can compute for any $n \geq 1$ the rank of the symbolic $(\beta_s + 1) \times \alpha_s n$ matrix $M_{Y,s}$ as well as the rank of the symbolic $\beta_s \times \alpha_s n$ matrix $M'_{Y,s}$. We obtain $\mlt_e(s)$ as the smallest $n$ such that $\rk (M_{Y,s}) > \rk(M'_{Y,s})$ and $\mlt_u(s)$ as the smallest $n$ such that $\rk (M_{Y,s}) = \beta_s + 1$. The algorithm terminates by the upper bound of $\beta_s + 1$ for both $\mlt_e(s)$ and $\mlt_u(s)$.
\end{proof}

\section{Simulations}
\label{sec:simulations}

In the previous section, we gave upper and lower bounds for the maximum likelihood thresholds for RDAG models, see Theorem~\ref{thm:RDAGboundsMlt}. Our bounds quantify how the graph colouring serves to decrease the number of samples needed for existence and uniqueness of the MLE. In this section, we 
assume that the number of samples is above the maximum likelihood threshold. We
explore via simulations the distance of an MLE to the true model parameters.
We compare the RDAG model estimate 
from Algorithm~\ref{algorithm:the} to the usual (uncoloured) DAG model MLE.

The details of our simulations are as follows.
We used the NetworkX Python package~\cite{hagberg2008exploring} to build an RDAG model via the following steps.
We first build a DAG by generating an undirected graph according to an Erdős–Rényi model that includes each edge with fixed probability, and then directing the edges so that $j \to i$ implies $j > i$. We assign edge colours randomly, after fixing the total number of possible edge colours. We choose the unique vertex colouring with the largest number of vertex colours that satisfies the compatibility assumption from Definition~\ref{dfn:compatibleColoring}. We sample edge weights $\lambda_{st}$ from a uniform distribution on $[-1, -0.25] \cup [0.25, 1]$ and we sample noise variances $\omega_{ss}$ uniformly from $[0,1]$. Our code is available at \url{https://github.com/seigal/rdag}.

The RDAG MLE is generally closer to the true model parameters than the DAG MLE, see Figure~\ref{fig:1}. As we would expect, both estimates get closer to the true parameters as the number of samples from the distribution increases. At a high number of samples, the difference between the RDAG MLE and the DAG MLE is smaller than at a low number of samples.

\begin{figure}[ht]
    \centering
    \includegraphics[width=8cm]{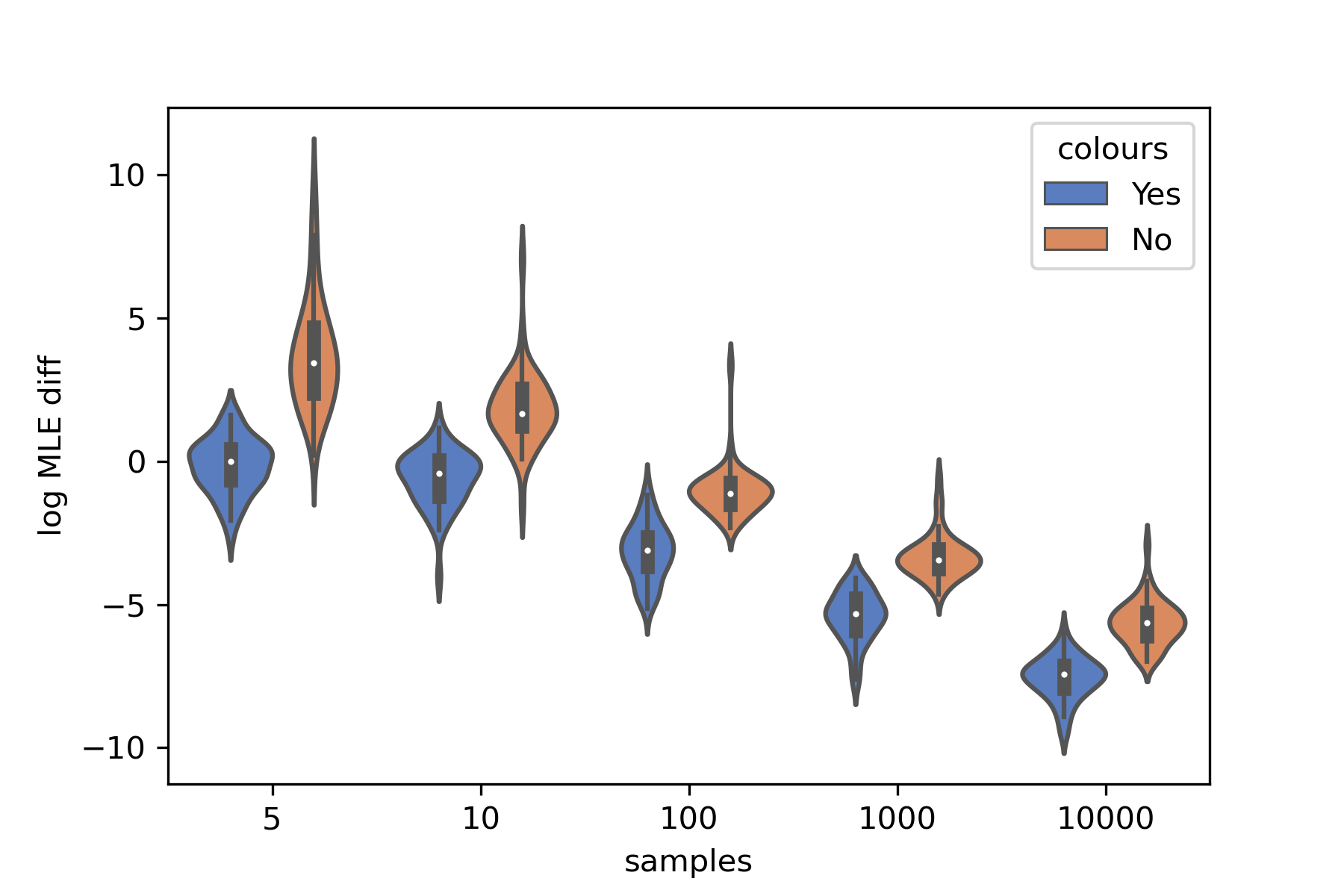}
    \caption{We generated RDAGs on $10$ vertices, with each edge present with probability $0.5$ and $5$ edge colours. We sampled from the distribution $n \in \{ 5, 10, 100, 1000, 10000 \}$ times. For each $n$ we generated $50$ random graphs and computed the RDAG MLE and the DAG MLE, comparing them to the true parameter values on a log scale. The results are displayed in a violin plot, with blue for the RDAG MLE and orange for the DAG MLE.}
    \label{fig:1}
\end{figure}

Next we examined how the RDAG MLE was affected by the number of edge colours, see Figure~\ref{fig:2}. The RDAG MLE is closest to the true parameters when the number of edge colours is small; i.e., when there are fewer parameters to learn. As the number of edge colours increases, the difference between the RDAG MLE and the DAG MLE decreases. Note that the DAG model is the setting where each vertex and edge has its own colour.

\begin{figure}[ht]
    \centering
    \includegraphics[width=8cm]{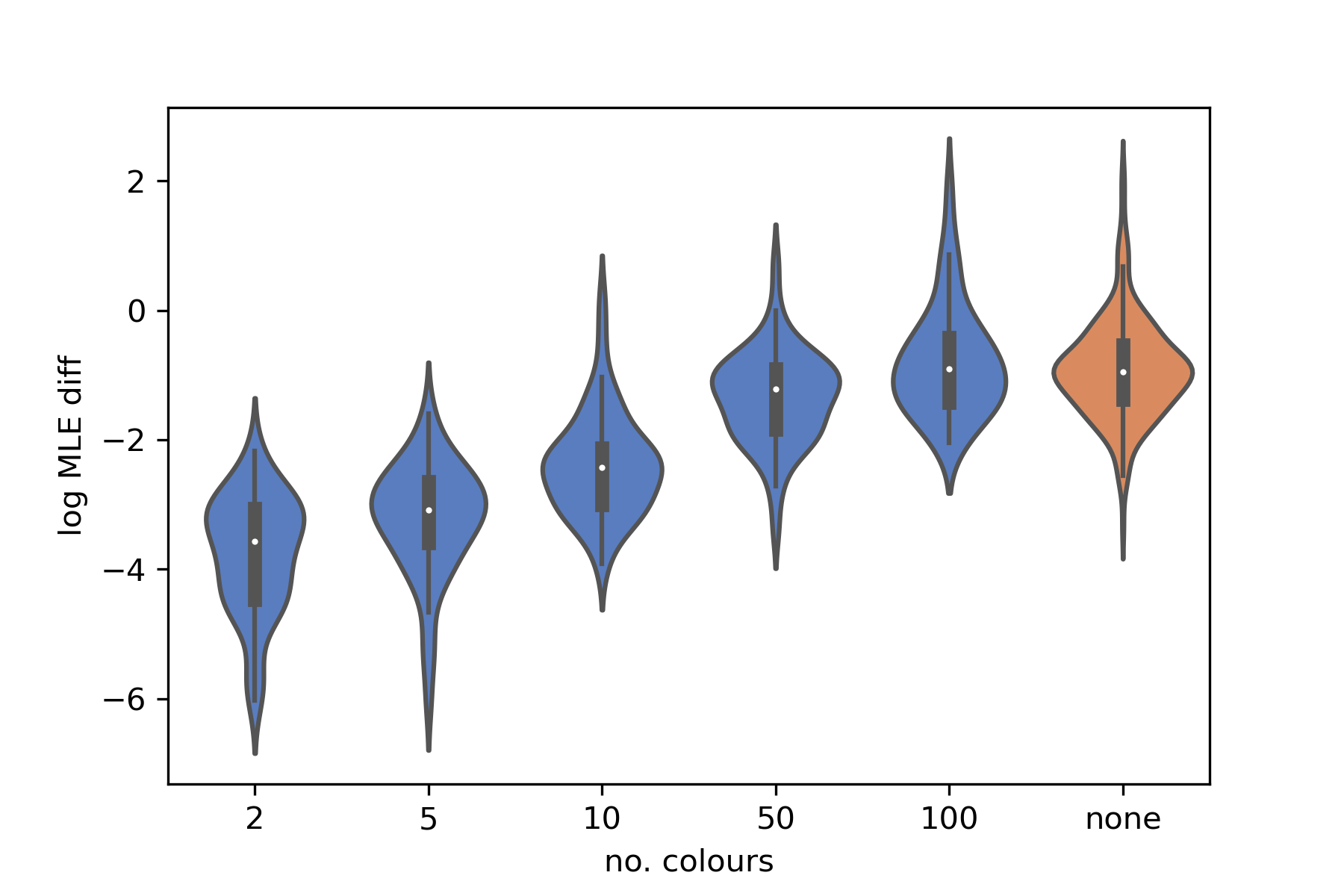}
    \caption{We generated RDAGs on $10$ vertices, each edge present with probability $0.5$ and number of edge colours in $\{ 2, 5, 10, 50, 100 \}$. We sampled from the distribution $100$ times and compared the MLE to the true parameter values on a log scale. The DAG MLE is shown in orange for comparison.}
    \label{fig:2}
\end{figure}

Finally, we looked at how the RDAG MLE and DAG MLE are affected by the edge density of the graph, see Figure~\ref{fig:3}. The RDAG MLEs get closer to the true parameter values as the edge density increases: more edges have the same weight, so more samples contribute to estimating each edge weight. By comparison, the DAG MLEs get further from the true parameters as the edge density increases, because there are more parameters to learn.

\begin{figure}[ht]
    \centering
    \includegraphics[width=8cm]{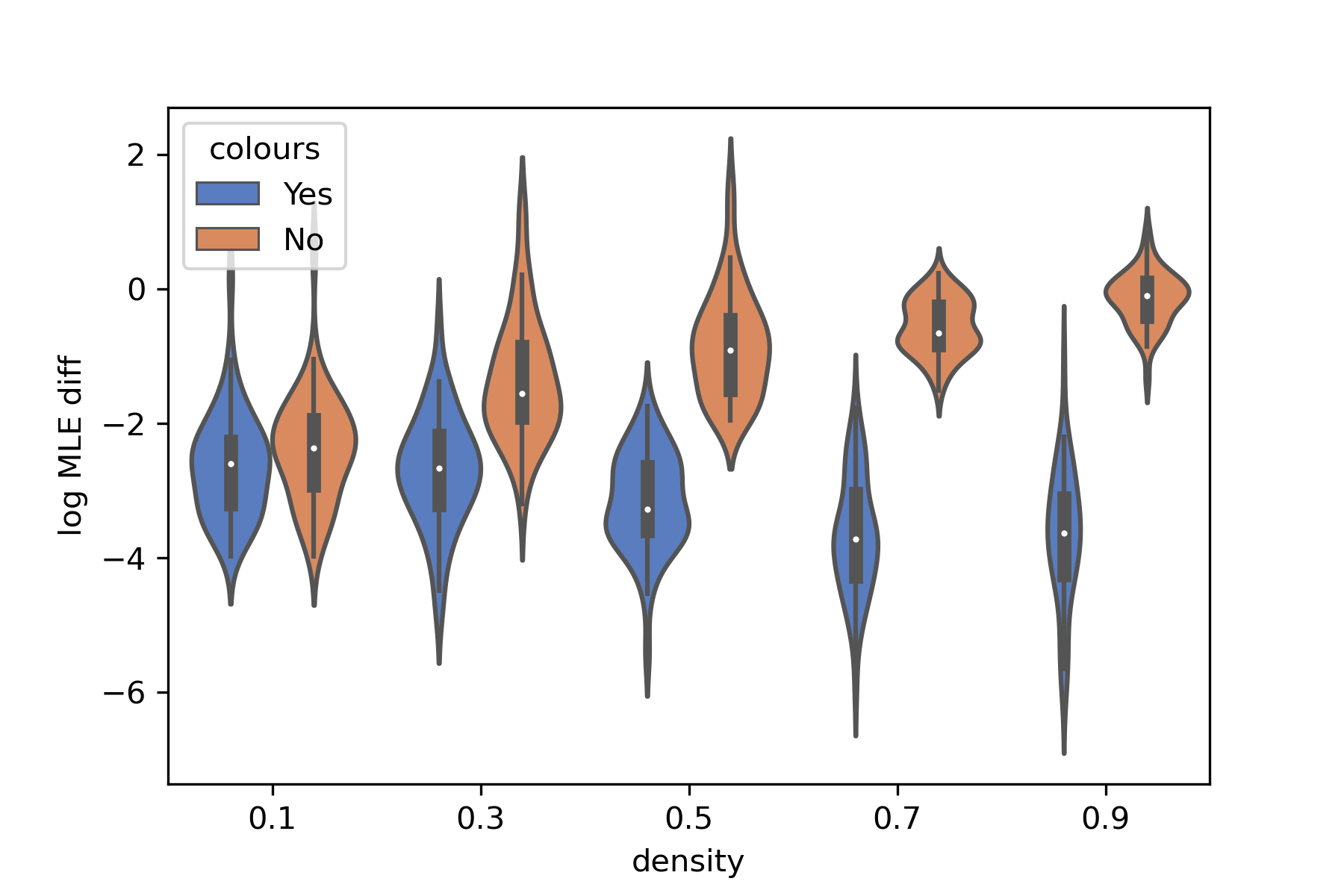}
    \caption{We generated RDAGs on $10$ vertices, each edge present with probability in $\{ 0.1, 0.3, 0.5, 0.7, 0.9\}$ and $5$ edge colours. For each edge probability we generated $50$ random graphs, sampled from each one $100$ times, and compared the RDAG and DAG MLEs. As above, blue is the RDAG MLE and orange is the DAG MLE.}
    \label{fig:3}
\end{figure}

\medskip
{\small
\paragraph{\textbf{Acknowledgements}}
We are grateful to Dominic Bunnett, Mathias Drton, Robin Evans, Caroline Uhler, and Piotr Zwiernik for helpful discussions. VM was supported by the University of Melbourne and NSF Grant CCF 1900460. PR was funded by the European Research Council (ERC) under the European’s Horizon 2020 research and innovation programme (grant agreement no. 787840).
}

\addtocontents{toc}{\protect\setcounter{tocdepth}{0}}

\bibliographystyle{alpha}
 \bibliography{literatur}
 
\addtocontents{toc}{\protect\setcounter{tocdepth}{1}}

\appendix

\section{Connections to stability}
\label{sec:appendix_stability}

We give a characterization of maximum likelihood estimation for RDAG models, via stability under a group action. We generalise the definitions of stability to a set rather than a group. This offers an alternative to Corollary~\ref{cor:the_MLEs} for the set of MLEs.

Fix a set of invertible $m \times m$ matrices $A$, with entries in $\RR$.
Consider some $Y \in \RR^{m \times n}$. 
By analogy to an orbit and stabiliser under a group action, we define the orbit and stabiliser under a {\em set} $A$ to be, respectively,
$$ A \cdot Y := \{ a Y \mid a \in A \}, \qquad A_Y := \{ a \in A \mid aY = Y \}. $$
This allows us to define stability notions analogous to the group situation.

\begin{definition}
\label{defn:stability_setE}
We say that the matrix $Y \in \RR^{m \times n}$, under the set $A$, is

\begin{itemize}
\item[(i)] unstable if there exist $a_n \in A$ with $a_n Y \to 0$ as $n \to \infty$, i.e. $0 \in \overline{A \cdot Y}$
\item[(ii)] semistable if $Y$ is not unstable, i.e. $0 \notin \overline{A \cdot Y}$
\item[(iii)] polystable if $Y \neq 0$ and the set $A \cdot Y$ is (Euclidean) closed 
\item[(iv)] stable if $Y$ is polystable and $A_Y$ is finite. 
\end{itemize}
\end{definition}

The above notions of stability are usually studied for $A$ a reductive group~\cite{Dolgachev, MumfordGIT}. They were studied for reductive and non-reductive groups in~\cite{amendola2020invariant}.  We are not aware of these definitions being used before without any group structure on $A$.

We relate maximum likelihood estimation for an RDAG model to these stability notions. 
For coloured graph $(\Gcal,c)$, we recall the definition of $A(\Gcal,c)$ from~\eqref{eqn:Agc}. Given a set $A \subseteq \GL_m$, we define $A_{\SL} = \{a \in A \mid \det(a) = 1\}$ and $A_{\SL}^{\pm} = \{ a\in A \mid \det(A) = \pm 1 \}$. 

\begin{theorem}\label{thm:RDAGstabilityVsMLE}
Consider an RDAG model on $(\Gcal,c)$ with compatible colouring $c$ and sample matrix $Y \in \RR^{m \times n}$. Then stability under $A(\Gcal,c)_{\SL}$ relates to maximum likelihood estimation:
  \[ \begin{matrix}
        \text{(a)} & Y \text{ unstable} &  \Leftrightarrow &  \ell_Y \text{ unbounded from above} \\ 
    \text{(b)} & Y \text{ semistable} & \Leftrightarrow & \ell_Y \text{ bounded from above} \\
    \text{(c)} & Y \text{ polystable} & \Leftrightarrow & \text{MLE exists} \\
    \text{(d)} & Y \text{ stable}  & \Leftrightarrow & \text{MLE exists uniquely.}
    \end{matrix} \]
\end{theorem}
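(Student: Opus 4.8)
The plan is to run the usual dictionary between Gaussian maximum likelihood estimation and norm minimisation, this time for the \emph{set} $A := A(\Gcal,c)_{\SL}$ rather than a group, and then to read off all four equivalences from Theorem~\ref{thm:MLestimationLinDependence}. Since $c$ is compatible, the RDAG model is $\Mcal_{A(\Gcal,c)}$ by Proposition~\ref{prop:compatibleColouring}, so $\ell_Y$ is a function of $a \in A(\Gcal,c)$ via $\ell_Y(a\T a) = 2\log|\det a| - \tfrac1n\|aY\|^2$, depending on $a$ only through $|\det a|$ and $\|aY\|$. Writing $\mathrm{cap}(Y) := \inf_{b\in A}\|bY\|^2$, I would first reduce $\sup_a \ell_Y$ to $\mathrm{cap}(Y)$: every $a \in A(\Gcal,c)$ factors as $a = tb$ with $t = |\det a|^{1/m}>0$ and $b \in A(\Gcal,c)_{\SL}^{\pm}$, and optimising the scalar $t$ first gives, for $Y \neq 0$,
\[
  \sup_{a \in A(\Gcal,c)} \ell_Y(a\T a) \;=\; C_n - m\log\Big(\inf_{b \in A(\Gcal,c)_{\SL}^{\pm}}\|bY\|^2\Big), \qquad C_n = m\log(mn)-m,
\]
the supremum being attained exactly when the infimum is. Compatibility lets one replace $A(\Gcal,c)_{\SL}^{\pm}$ by $A$: if $b \in A(\Gcal,c)$ has $\det b = -1$ then some $\alpha_s$ is odd, and there is a diagonal $\pm1$ matrix $k$, constant on each vertex-colour class, with $kb \in A(\Gcal,c)$ by compatibility, $\det(kb)=1$ and $\|kbY\|=\|bY\|$. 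Hence $\sup_a \ell_Y(a\T a) = C_n - m\log\mathrm{cap}(Y)$, with the supremum attained iff $\mathrm{cap}(Y)$ is attained; the case $Y=0$ is trivial ($\ell_Y$ unbounded, $Y$ unstable, $Y$ not polystable).

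For (a) and (b): by continuity $\mathrm{cap}(Y)=0 \iff 0 \in \overline{A\cdot Y} \iff Y$ unstable, and by the above $\mathrm{cap}(Y)=0 \iff \ell_Y$ unbounded, while (b) is the negation. To evaluate $\mathrm{cap}(Y)$ I would use~\eqref{eqn:finding_MLE} (valid since compatibility makes the edge colours partition as $\bigsqcup_s \prc(s)$): minimising $\|bY\|^2 = \sum_s \omega_{ss}^{-1}\|M_{Y,s}^{(0)} - \sum_t \lambda_{s,t}M_{Y,s}^{(t)}\|^2$ over $\Lambda$ by orthogonal projection gives $\sum_s \omega_{ss}^{-1}\zeta_s$ with $\zeta_s = \mathrm{dist}(M_{Y,s}^{(0)}, \mathrm{span}\{M_{Y,s}^{(t)}:t\in[\beta_s]\})^2$, and minimising over $\Omega$ subject to $\prod_s \omega_{ss}^{\alpha_s}=1$ by the weighted AM--GM inequality shows $\mathrm{cap}(Y)>0$ and is attained when every $\zeta_s>0$, and $\mathrm{cap}(Y)=0$ when some $\zeta_s=0$. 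By Theorem~\ref{thm:MLestimationLinDependence}(b), ``$\zeta_s>0$ for all $s$'' is exactly ``the MLE exists''. For (c), one direction is immediate: polystable $\Rightarrow$ semistable $\Rightarrow \mathrm{cap}(Y)>0 \Rightarrow$ MLE exists. Conversely, if $\mathrm{cap}(Y)>0$ I must show $A\cdot Y$ is closed. Given $b_n Y \to Z$ with $b_n = \Omega_n^{-1/2}(\id-\Lambda_n)\in A$: if some $(\omega_{ss})_n\to 0$ along a subsequence, the colour-$s$ block $(\omega_{ss})_n^{-1/2}\big(M_{Y,s}^{(0)} - \sum_t(\lambda_{s,t})_nM_{Y,s}^{(t)}\big)$ of $b_nY$ would force $M_{Y,s}^{(0)}\in\mathrm{span}\{M_{Y,s}^{(t)}\}$, contradicting $\mathrm{cap}(Y)>0$; so each $(\omega_{ss})_n$ is bounded away from $0$, hence (by the product constraint) $\Omega_n$ lies in a compact set of positive diagonal matrices. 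Replacing each $\Lambda_n$ by its component modulo the linear relations among the $M_{Y,s}^{(t)}$ (which alters neither $b_nY$ nor $\det b_n$) makes the $\Lambda_n$ bounded as well, so a subsequence of $(\Omega_n,\Lambda_n)$ converges to some $(\Omega^*,\Lambda^*)$ with $\Omega^*$ positive, giving $Z = (\Omega^*)^{-1/2}(\id-\Lambda^*)Y \in A\cdot Y$. Together with the capacity computation this proves (c).

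For (d): assume $Y$ polystable, so the MLE exists, i.e. $M_{Y,s}^{(0)}\notin\mathrm{span}\{M_{Y,s}^{(t)}:t\in[\beta_s]\}$ for all $s$. For $b=\Omega^{-1/2}(\id-\Lambda)\in A$, the equation $bY=Y$ is equivalent to $(1-\omega_{ss}^{1/2})M_{Y,s}^{(0)} = \sum_t \lambda_{s,t}M_{Y,s}^{(t)}$ for every $s$. If all the $M_{Y,s}^{(t)}$, $t\in[\beta_s]$, are linearly independent, the right-hand side lies in their span whereas the left-hand side does not unless it vanishes, forcing $\omega_{ss}=1$ and then $\lambda_{s,t}=0$ for all $s,t$, so $A_Y=\{\id\}$. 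If they are linearly dependent for some $s$, then $\Omega=\id$ with $\lambda_{s,t}=\varepsilon\mu_{s,t}$ (all other edge-colour parameters $0$), for $\mu\neq 0$ a relation $\sum_t\mu_{s,t}M_{Y,s}^{(t)}=0$, yields an infinite one-parameter family in $A_Y$. Hence, given existence of the MLE, $A_Y$ is finite iff every family $\{M_{Y,s}^{(t)}:t\in[\beta_s]\}$ is linearly independent, which by Theorem~\ref{thm:MLestimationLinDependence}(c) is exactly uniqueness of the MLE; combined with (c) this gives (d).

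The main obstacle is the closedness statement in the proof of (c): because $A(\Gcal,c)_{\SL}$ carries no group structure, one cannot invoke Kempf--Ness theory, so closedness of the orbit must be established directly from the explicit parametrisation, controlling simultaneously the possible degeneration of the diagonal part $\Omega_n$ and the unbounded directions of $\Lambda_n$ coming from linear relations among the rows $M_{Y,s}^{(t)}$; this is precisely where the hypothesis $\mathrm{cap}(Y)>0$ (equivalently, existence of the MLE) does the work. The sign bookkeeping between $A(\Gcal,c)_{\SL}$ and $A(\Gcal,c)_{\SL}^{\pm}$ in the capacity reduction is the other place where compatibility is essential.
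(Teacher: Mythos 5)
Your argument is correct, and its overall architecture coincides with the paper's: reduce $\sup_a \ell_Y(a\T a)$ to the norm infimum over the determinant-one slice (this is Proposition~\ref{prop:GeneralStabilityVsMLE} together with the sign trick of Remark~\ref{rem:A-SL-forRDAG}), translate everything through the linear-independence dictionary of Theorem~\ref{thm:MLestimationLinDependence}, and settle (d) by computing the stabiliser exactly as in Proposition~\ref{prop:RDAGStabilityVsLinDependence}(d). Where you genuinely diverge is the closedness half of (c), i.e.\ the analogue of Lemma~\ref{lem:SemistablePolystable}: the paper decomposes $A = T\cdot U$, identifies $U\cdot Y$ with a product of affine hyperplanes $H_s = M_{Y,s}^{(0)}+X_s$ sitting inside subspaces $K_s$, and exhibits $A\cdot Y$ as the Zariski-closed locus $\mathbb{V}\big(\prod_s p_s^{\alpha_s}-1\big)$, whereas you run a direct sequential-compactness argument, using $\zeta_s>0$ to keep the $\omega_{ss,n}$ away from $0$ (and hence, via the determinant constraint, bounded) and quotienting the $\Lambda_n$ by the kernel of $\lambda\mapsto\sum_t\lambda_{s,t}M_{Y,s}^{(t)}$ to extract a convergent subsequence. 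Your version yields only Euclidean closedness, but that is all Definition~\ref{defn:stability_setE}(iii) requires; the paper's stronger Zariski statement is what it later reuses in the Popov-criterion discussion. One small point to patch: you parametrise elements of $A(\Gcal,c)_{\SL}$ as $\Omega^{-1/2}(\id-\Lambda)$, but such elements may have negative diagonal entries; the general element is $k\,\Omega^{-1/2}(\id-\Lambda)$ with $k$ a $\pm1$ diagonal matrix constant on vertex-colour classes (compatibility guarantees $kb\in A(\Gcal,c)$, and the determinant-one constraint forces $\det k=1$). Since there are finitely many such $k$ and each is orthogonal, passing to a subsequence with fixed $k$ makes both your compactness argument and your stabiliser computation go through verbatim, so this is an omission rather than a gap.
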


\begin{proposition}
\label{prop:second-bijection}
Fix the RDAG model on $(\Gcal,c)$ and set $A:= A(\Gcal,c)_{\SL}$. If $\lambda a\T a$ is an MLE given $Y$, where $a \in A$ and $\lambda > 0$ is a scalar, then the set of MLEs given $Y$ are in bijection with $A_Y$ under mapping $b \in A_Y$ to $\lambda (a + b - \id)\T (a + b - \id)$.
\end{proposition}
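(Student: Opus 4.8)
The plan is to route the statement through the set of (suitably scaled) triangular factors that realise an MLE and match that set, on the nose, with $A_Y$. Write $A := A(\Gcal,c)_{\SL}$, and recall that $A(\Gcal,c) = V \cap \GL_m$ for a linear subspace $V$ containing $\id$, so $A$ consists of upper-triangular matrices respecting the colouring and of determinant $1$. By Proposition~\ref{prop:compatibleColouring} the model is $\Mcal_{A(\Gcal,c)}$, and — passing from an arbitrary factor to its Cholesky factor by a colour-compatible sign flip (as in Lemma~\ref{lem:cholesky_MA}) and rescaling to determinant $1$ — every model element can be written $t\,a_0\T a_0$ with $t>0$ and $a_0 \in A$. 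From~\eqref{eqn:likelihood_e} one gets $\ell_Y(t\,a_0\T a_0) = m\log t - \tfrac{t}{n}\|a_0 Y\|^2$ (the $\log\det$ term is zero since $\det a_0 = 1$), which — using that the hypothesis forces $Y\neq 0$, hence $\|a_0 Y\|>0$ — is maximised over $t$ at $t = mn/\|a_0 Y\|^2$. Thus maximising $\ell_Y$ over the model is the same as minimising $a_0 \mapsto \|a_0 Y\|^2$ over $A$. Put $\mu_0 := \min_{a_0\in A}\|a_0 Y\|^2$, $\lambda_0 := mn/\mu_0$, and $\Minimisers := \{a_0\in A : \|a_0Y\|^2=\mu_0\}$; then the set of MLE matrices is $\{\lambda_0\,a_0\T a_0 : a_0\in\Minimisers\}$, and the data of the statement give $\lambda=\lambda_0$ and $a\in\Minimisers$.

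Next I would prove two facts. \emph{(I) Every $b\in A_Y$ is unipotent.} Grouping the rows of $bY=Y$ by vertex colour and using that $b$ respects the colouring, one gets for each vertex colour $s$ the identity $(b_s-1)\,M_{Y,s}^{(0)} = -\sum_{t\in\prc(s)} b^{(t)}\,M_{Y,s}^{(t)}$, where $b_s$ is the common diagonal entry of $b$ on colour $s$ and $b^{(t)}$ its common entry on edge colour $t$. If $b_s\neq 1$, this would place $M_{Y,s}^{(0)}$ in $\mathrm{span}\{M_{Y,s}^{(i)} : i\in[\beta_s]\}$, contradicting the existence of the MLE by Theorem~\ref{thm:MLestimationLinDependence}(b); hence $b_s=1$ for all $s$. \emph{(II) Every MLE matrix $\Psi_0$ equals $\lambda_0\,a_0\T a_0$ for some $a_0\in\Minimisers$ with $\mathrm{diag}(a_0)=\mathrm{diag}(a)$, and then $a_0 Y = aY$.} The Cholesky factor of $\Psi_0$ lies in $A(\Gcal,c)$ by Lemma~\ref{lem:cholesky_MA}; rescaling it by $1/\sqrt{\lambda_0}$ and flipping the sign of each row to match the sign of the corresponding diagonal entry of $a$ (these signs are constant on vertex-colour classes since $a\in A(\Gcal,c)$, so the colour conditions survive, and a determinant count shows the result lies in $A$) produces such an $a_0$; then $a_0Y=aY$ follows from Corollary~\ref{cor:the_MLEs} together with $\mathrm{diag}(a_0)=\mathrm{diag}(a)$.

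The bijection is then immediate. \emph{Well-defined:} for $b\in A_Y$ set $a':=a+b-\id\in V$; by (I) it is upper triangular with the same nonzero diagonal as $a$, so $\det a'=1$ and $a'\in A$, and $a'Y = aY+(b-\id)Y = aY$ gives $\|a'Y\|^2=\mu_0$, so $a'\in\Minimisers$ and $\lambda_0(a')\T a'$ is an MLE matrix. \emph{Injective:} if $a+b-\id$ and $a+b'-\id$ give the same matrix, their Gram matrices agree, and two invertible upper-triangular matrices with equal diagonals and the same Gram matrix differ by a sign matrix, which must be $\id$; hence $b=b'$. \emph{Surjective:} given an MLE matrix $\Psi_0$, take $a_0$ as in (II) and set $b:=\id+a_0-a\in V$; it is unipotent (diagonal $1+\mathrm{diag}(a_0)-\mathrm{diag}(a)=\id$), hence in $\SL_m$, and $bY = Y+a_0Y-aY = Y$, so $b\in A_Y$ and $b\mapsto \lambda_0 a_0\T a_0=\Psi_0$.

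The main obstacle is fact (I): without unipotence of $A_Y$ the assignment $a'\mapsto\lambda_0(a')\T a'$ would only be finite-to-one — sign matrices constant on vertex-colour classes give genuinely distinct triangular factors with the same Gram matrix — and the cardinality of $A_Y$ would not match the number of MLE matrices. It is exactly the ``not in the span'' clause of Theorem~\ref{thm:MLestimationLinDependence}(b) that forces $b_s=1$, pinning $\mathrm{diag}(a+b-\id)$ to $\mathrm{diag}(a)$ and thereby killing the sign ambiguity; the remaining work — the colour-compatible sign-matching and determinant bookkeeping in fact (II), and the ``differ by a sign matrix'' step — is routine.
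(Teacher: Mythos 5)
Your proof is correct, and it departs from the paper's in one substantive place: the surjectivity step. The paper relates two minimal-norm points $aY$ and $\widetilde{a}Y$ by an orthogonal torus element via Lemma~\ref{lem:twoStepRDAG}(c), whose proof invokes the Kempf--Ness theorem for the action of the diagonal torus $T$. You instead take the Cholesky factor of a second MLE, sign-flip it row-wise to match the signs of $\mathrm{diag}(a)$, and then use Corollary~\ref{cor:the_MLEs} to force the two factors to agree on the diagonal and to satisfy $a_0Y=aY$. This replaces the invariant-theoretic input by the elementary uniqueness statement for the $(\Lambda,\Omega)$-parameters, which is more self-contained; what the paper's route buys is that Lemma~\ref{lem:twoStepRDAG} is needed anyway for Proposition~\ref{prop:StabiliserMLEsGroup}. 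Everything else --- the reduction of maximum likelihood estimation to norm minimisation over $A$, the unipotence of $A_Y$ from the ``not in the span'' condition of Theorem~\ref{thm:MLestimationLinDependence}(b), and injectivity via ``orthogonal upper triangular $\Rightarrow$ sign matrix'' --- coincides with the paper's argument (the paper phrases injectivity as uniqueness of the Cholesky decomposition after a sign adjustment, which is the same fact). Two small expository points. First, in your fact (II) the sign-flip only matches the \emph{signs} of the diagonal entries; equality of their \emph{magnitudes} is exactly the $\Omega=\Omega'$ clause of Corollary~\ref{cor:the_MLEs} (via $\omega_{ii}=(\lambda\, a_{ii}^{2})^{-1}$ from the LDL/Cholesky dictionary), so that clause is doing work already in establishing $\mathrm{diag}(a_0)=\mathrm{diag}(a)$ and not only in deriving $a_0Y=aY$ afterwards. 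Second, you implicitly use compatibility of the colouring throughout (for $\id\in A(\Gcal,c)$, for the sign-flip to preserve the edge-colour conditions, and to invoke Theorem~\ref{thm:MLestimationLinDependence}); this is the standing assumption of the appendix, but it is worth stating.
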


Theorem~\ref{thm:RDAGstabilityVsMLE} applies to any DAG model, see Remark~\ref{rmk:DAGvsRDAG}. Therefore, Theorem~\ref{thm:RDAGstabilityVsMLE} generalises~\cite[Theorem 5.3]{amendola2020invariant} in multiple ways. First, it extends from \emph{transitive} DAGs\footnote{A DAG is transitive if whenever $k \to j$ and $j \to i$ are in $\mathcal{G}$ then so is $k \to i$. It is exactly the condition to make $A(\Gcal)$ a group, see \cite[Proposition~5.1]{amendola2020invariant}.} to all DAGs. 
Second, it extends from uncoloured DAG models to RDAG models.
Third, it adds part~(d) about stable samples. Moreover, Proposition~\ref{prop:second-bijection} gives a bijection between the MLEs and the stabilising set. 

To prove Theorem~\ref{thm:RDAGstabilityVsMLE} and Proposition~\ref{prop:second-bijection}, we first we generalise \cite[Proposition~3.4 and Theorem~3.6]{amendola2020invariant} to no longer require that $A$ is a group. We say that a set $A$ is closed under non-zero scalar multiples if $a \in A$ implies $ta \in A$ for all $t \in \RR^\times$.

\begin{proposition}\label{prop:GeneralStabilityVsMLE}
Let $A$ be a set of real invertible $m \times m$ matrices, closed under non-zero scalar multiples. 
There is a correspondence between stability under $A_{\SL}^{\pm}$ and maximum likelihood estimation in the model $\Mcal_A$ given sample matrix $Y \in \RR^{m \times n}$: 
$$ \begin{matrix} (a) & Y \text{ unstable}  & \Leftrightarrow & \text{likelihood $\ell_Y$ unbounded from above} \\ 
(b) &  Y \text{ semistable} & \Leftrightarrow & \text{likelihood $\ell_Y$ bounded from above} \\ 
(c) & Y \text{ polystable}  & \Rightarrow & \text{MLE exists.} \end{matrix} $$
The MLEs, if they exist, are the matrices $\lambda a\T a$, where $\| a\cdot Y\| > 0$ is minimal in $A_{\SL}^{\pm} \cdot Y$ and $\lambda \in \RR_{>0}$ is the unique global minimum of
    \[\RR_{>0} \to \RR, \qquad \; x \mapsto \frac{x}{n} \|a \cdot Y\|^2 - m\log(x).\]
If $A$ contains an orthogonal matrix of determinant $-1$, then $A_{\SL}^{\pm}$ can be replaced by~$A_{\SL}$.
\end{proposition}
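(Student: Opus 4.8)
The plan is to split maximum likelihood estimation over $\Mcal_A$ into a one-dimensional ``radial'' optimisation, solved by elementary calculus, and an optimisation over the ``unit-determinant slice'' $A_{\SL}^{\pm}$, which is where all the stability content of Definition~\ref{defn:stability_setE} sits. Since $A$ is closed under non-zero scalar multiples, every $a \in A$ factors as $a = \sqrt{\lambda}\, b$ with $\lambda := |\det a|^{2/m} > 0$ and $b := a/\sqrt{\lambda} \in A_{\SL}^{\pm}$. Because $\det(a\T a) = (\det a)^2 = \lambda^m$ and $\|a\cdot Y\|^2 = \lambda\,\|b\cdot Y\|^2$, equation~\eqref{eqn:likelihood_e} gives
\[
 \ell_Y(a\T a) \;=\; m\log\lambda \;-\; \frac{\lambda}{n}\,\|b\cdot Y\|^2 .
\]
Thus maximising $\ell_Y$ over $\Mcal_A$ is the same as maximising the right-hand side over $b \in A_{\SL}^{\pm}$ and $\lambda > 0$.

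For fixed $b$ I would analyse $\lambda \mapsto m\log\lambda - \tfrac{\lambda}{n}\|b\cdot Y\|^2$, equivalently minimise the function $x \mapsto \tfrac{x}{n}\|b\cdot Y\|^2 - m\log x$ appearing in the statement, which one handles by elementary calculus (or by Lemma~\ref{lem:MinimumOfMinusLogLikelihood} after the reciprocal substitution). If $b\cdot Y = 0$ — equivalently $Y = 0$, since $b$ is invertible — the function is unbounded above; otherwise it is strictly concave with unique maximiser $\lambda = mn/\|b\cdot Y\|^2$ and maximal value $m\log(mn) - m - m\log\|b\cdot Y\|^2$, so $\sup_{\lambda>0}$ is strictly decreasing in $\|b\cdot Y\|$. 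Taking the supremum over $b$, the supremum of $\ell_Y$ over $\Mcal_A$ equals $m\log(mn) - m - m\log d^2$, where $d := \inf\{\|b\cdot Y\| : b \in A_{\SL}^{\pm}\} = \operatorname{dist}(0,\, A_{\SL}^{\pm}\!\cdot Y)$, with value $+\infty$ when $d = 0$. Now $d = 0$ iff $0 \in \overline{A_{\SL}^{\pm}\!\cdot Y}$, i.e.\ iff $Y$ is unstable; hence $\ell_Y$ is unbounded above iff $Y$ is unstable, which is~(a), and~(b) is its logical negation.

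For~(c), suppose $Y$ is polystable, so $Y \neq 0$ and $A_{\SL}^{\pm}\!\cdot Y$ is closed. Invertibility of the matrices in $A_{\SL}^{\pm}$ together with $Y \neq 0$ give $0 \notin A_{\SL}^{\pm}\!\cdot Y = \overline{A_{\SL}^{\pm}\!\cdot Y}$, hence $d > 0$. Intersecting the closed set $A_{\SL}^{\pm}\!\cdot Y$ with the closed ball of radius $d+1$ yields a nonempty compact set on which the continuous function $\|\cdot\|$ attains its minimum, which is $d$ since $d+1 > d$. So $d = \|a\cdot Y\| > 0$ for some $a \in A_{\SL}^{\pm}$, and plugging $b = a$ into the first paragraph shows $\lambda\, a\T a$ is a global maximiser of $\ell_Y$, i.e.\ an MLE, with $\lambda$ the minimiser of $x \mapsto \tfrac{x}{n}\|a\cdot Y\|^2 - m\log x$. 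Conversely, any MLE factors as $(a^{*})\T a^{*} = \lambda^{*} (b^{*})\T b^{*}$ with $b^{*} \in A_{\SL}^{\pm}$, $\lambda^{*}>0$, and the monotonicity from the second paragraph forces $\|b^{*}\cdot Y\| = d$ and $\lambda^{*}$ to be this same radial minimiser; this is exactly the asserted description of the MLEs.

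\textbf{Last sentence and main obstacle.} If $o \in A$ is orthogonal with $\det o = -1$, then left multiplication by $o$ is a linear isometry of $\RR^{m\times n}$ with $(oa)\T(oa) = a\T a$, and — using that such an $o$ normalises the sets $A$ of interest — it carries $A_{\SL}$ bijectively onto the determinant $-1$ part of $A_{\SL}^{\pm}$, so $A_{\SL}^{\pm}\!\cdot Y = A_{\SL}\!\cdot Y \cup o\,(A_{\SL}\!\cdot Y)$; since $o$ is an isometry fixing $0$, none of membership of $0$ in the closure, closedness of the orbit, the value $d$, or the norm-minimisers (up to $a \mapsto oa$) changes when $A_{\SL}^{\pm}$ is replaced by $A_{\SL}$, so~(a)--(c) and the MLE description persist verbatim. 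I expect the main obstacle to be~(c): converting the purely topological hypothesis ``$A_{\SL}^{\pm}\!\cdot Y$ closed'' into the genuine \emph{existence} of a norm-minimiser, hence of an MLE. The key observation making this routine is that polystability already forces $d > 0$, after which a single compactification against a closed ball suffices; a secondary point requiring care is the monotonicity bookkeeping that pins down precisely which matrices $a$ yield MLEs.
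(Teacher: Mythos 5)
Your proof is correct and takes essentially the same route as the paper's: your factorisation $a=\sqrt{\lambda}\,b$ with $b\in A_{\SL}^{\pm}$ is the paper's substitution $a=\tau b$, $x=\tau^2$, your explicit one-variable optimum and its monotonicity in $\|b\cdot Y\|$ reproduce the paper's reduction of $\sup\ell_Y$ to a double infimum, and your compactness argument for (c) (polystability forces $d>0$, then intersect the closed orbit with a ball) is the paper's argument verbatim. The one step worth flagging is the final sentence: your claim that $o$ carries $A_{\SL}$ onto the determinant $-1$ part of $A_{\SL}^{\pm}$ requires the extra hypothesis that $o$ normalises $A$ (which a bare set closed under non-zero scalar multiples need not satisfy), but the paper's own proof makes the same implicit assumption when it writes $a=\tau o b$ with $b\in A_{\SL}$, and the assumption does hold in the intended application to $A(\Gcal,c)$ (Remark~\ref{rem:A-SL-forRDAG}), so you are simply being explicit about a gap the paper shares.
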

 
\begin{proof}
This is proved using the same argument as~\cite[Proposition~3.4 and Theorem~3.6]{amendola2020invariant}.
Maximising $\ell_Y$ over $\Mcal_A$ is equivalent to minimising
    \begin{align*}
        f \colon A \to \RR, \; \qquad a \mapsto \frac{1}{n} \|a \cdot Y\|^2 - \log\det(a\T a).
    \end{align*}
We write $a \in A$ as $\tau b$, where $\tau \in \RR_{>0}$ and $b \in A_{\SL}^{\pm}$. Setting $x := \tau^2$ we compute
    \[ f(a) = \frac{\tau^2}{n} \| b \cdot Y \|^2 - \log\det(\tau^2 b\T b)
    = \frac{x}{n} \|b \cdot Y\|^2 - m \log(x).   \]
The infimum of the function $x \mapsto x C - m\log(x)$ increases as $C \geq 0$ increases, hence
    \begin{equation}\label{eq:doubleInf}
        \sup_{a \in A} \ell_Y(a\T a) = - \inf_{a \in A} f(a) = - \inf_{x \in \RR_{>0}} \left( \frac{x}{n} \left( \inf_{b \in A_{\SL}^{\pm}} \|b \cdot Y\|^2 \right) - m \log(x) \right).
    \end{equation}
We have $\inf_{a \in A} f(a) = - \infty$ if and only if $\inf_{b \in A_{\SL}^{\pm}} \|b \cdot Y\|^2 = 0$, i.e. if and only if $Y$ is unstable. This shows parts (a) and (b).

To prove (c) assume that $Y$ is polystable under $A_{\SL}^{\pm}$. Then $C := \inf_{b \in A_{\SL}^{\pm}} \|b \cdot Y\|^2$ is strictly positive, as $Y$ is semistable. Since $A_{\SL}^{\pm} \cdot Y$ is closed in $\RR^{m \times n}$, we see that $C$ is attained in the compact set $(A_{\SL}^{\pm} \cdot Y) \cap \{ Z \in \RR^{m \times n} \mid \|Z\|^2 \leq C+1 \}$. Thus, an MLE given $Y$ exists. If an MLE exists, then the inner and outer infima in~\eqref{eq:doubleInf} are attained, and any MLE has the form in the statement.

If $A$ contains an orthogonal matrix of determinant $-1$, we can write from the outset $a = \tau o b$ with $\tau \in \RR_{>0}$, $b \in A_{\SL}$ and $o$ orthogonal of determinant $\pm 1$. Setting $x := \tau^2$, we derive the same computation for $f(a)$ (now with $b \in A_{\SL}$), since $o$ is orthogonal. The rest of the proof then works with $A_{\SL}$ instead of $A_{\SL}^{\pm}$.
\end{proof}

\begin{remark}\label{rem:A-SL-forRDAG}
Given an RDAG model $\Mcal_{A(\Gcal,c)}$ with compatible colouring, 
we can always apply Proposition~\ref{prop:GeneralStabilityVsMLE} using stability under $A(\Gcal,c)_{\SL}$ instead of the bigger set $A(\Gcal,c)_{\SL}^{\pm}$.
Indeed, if the number of vertices of colour $s$, $\alpha_s$, is even for all $s \in c(I)$, then $A(\Gcal,c)$ only contains matrices of positive determinant, so $A(\Gcal,c)_{\SL}^{\pm} = A(\Gcal,c)_{\SL}$. If $\alpha_s$ is odd for some vertex colour $s$, then $A(\Gcal,c)$ contains an orthogonal matrix with determinant~$-1$.
\end{remark}

Next, we return to the linear independence condition in Theorem~\ref{thm:MLestimationLinDependence}(b).

\begin{lemma}\label{lem:SemistablePolystable}
Consider the RDAG model on $(\Gcal,c)$ where colouring $c$ is compatible,
 and set $A := A(\Gcal,c)_{\SL}$. 
Assume for a non-zero $Y \in \RR^{m \times n}$ that $M_{Y,s}^{(0)} \notin \mathrm{span} \big\lbrace M_{Y,s}^{(1)},\ldots, M_{Y,s}^{(\beta_s)} \big\rbrace$ for all $s \in c(I)$. Then $Y$ is polystable under $A$ and $A \cdot Y$ is Zariski closed.
\end{lemma}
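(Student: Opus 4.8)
The plan is to describe $A\cdot Y$ explicitly as the intersection of a linear subspace of $\RR^{m\times n}$ with a single hypersurface; both Euclidean- and Zariski-closedness then fall out at once, and polystability follows since $Y\neq 0$ by hypothesis.

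First I would unwind the structure of $A:=A(\Gcal,c)_{\SL}$. By compatibility, a matrix $a\in A(\Gcal,c)$ is determined by the scalars $d_s:=a_{ii}$, one per vertex colour $s$ (well defined and nonzero, since $c(ii)=c(kk)$ forces $a_{ii}=a_{kk}$), together with $e_t:=a_{ij}$, one per edge colour $t$ (where $j\to i$ has colour $t$); the subset $A$ is cut out by the single equation $\prod_{s}d_s^{\alpha_s}=\det a=1$. Reordering the rows of a matrix by vertex colour and concatenating within each colour class gives a linear isomorphism $\RR^{m\times n}\cong\bigoplus_{s\in c(I)}\RR^{\alpha_s n}$; as in the computation~\eqref{eqn:finding_MLE}, it carries $a\cdot Y$ to the tuple $(v_s(a))_{s}$, where $v_s(a)=d_s M_{Y,s}^{(0)}+\sum_{t\in[\beta_s]}e_t M_{Y,s}^{(t)}$. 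The crucial role of compatibility here is that the sets $\prc(s)$ partition the edge colours, so $v_s(a)$ involves only $d_s$ and the $e_t$ with $t\in[\beta_s]$, and these parameter groups are disjoint across $s$. Hence $\{(v_s(a))_s : a\in A(\Gcal,c)\}=\prod_{s}W_s$, where $W_s:=\{d\,M_{Y,s}^{(0)}+w : d\in\RR^\times,\ w\in V_s\}$ and $V_s:=\mathrm{span}\{M_{Y,s}^{(t)} : t\in[\beta_s]\}$.

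Next I would feed in the hypothesis $M_{Y,s}^{(0)}\notin V_s$ (which in particular makes $M_{Y,s}^{(0)}$ nonzero). Then $U_s:=\RR M_{Y,s}^{(0)}\oplus V_s$ is an internal direct sum, so there is a linear functional $\pi_s\colon U_s\to\RR$ recording the $M_{Y,s}^{(0)}$-coordinate, and $W_s=U_s\setminus V_s$, with the scalar $d$ of an element equal to its image under $\pi_s$. Combining this with the parametrisation above, under the row-reordering identification the orbit map $a\mapsto a\cdot Y$ carries $A$ onto
\[ \bigl\{\, (v_s)_s \in \prod_{s} U_s \;\big|\; \prod_{s}\pi_s(v_s)^{\alpha_s}=1 \,\bigr\}. \]
Indeed, the constraint $\prod_s\pi_s(v_s)^{\alpha_s}=1$ already forces each $\pi_s(v_s)\neq 0$, hence each $v_s\in W_s$; conversely, given such a tuple one sets $d_s:=\pi_s(v_s)$, writes $v_s-d_s M_{Y,s}^{(0)}\in V_s$ as a combination of the $M_{Y,s}^{(t)}$ — choosing coefficients $e_t$ independently over the disjoint blocks $\prc(s)$ — and thereby obtains $a\in A$ with $a\cdot Y$ equal to the given tuple.

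Finally I would conclude. The set $\prod_s U_s$ is a linear subspace of $\RR^{m\times n}$, hence both Zariski- and Euclidean-closed; extending each $\pi_s$ to a linear functional on $\RR^{m\times n}$ shows that $\prod_s\pi_s(\cdot)^{\alpha_s}-1$ is (the restriction of) a polynomial, so $A\cdot Y$ is the zero set of a polynomial inside that subspace — Zariski closed, and also Euclidean closed because the polynomial is continuous. Since $Y\neq 0$ by assumption, this makes $Y$ polystable under $A$. I expect the one step to state with care to be this decoupling across vertex colours: it is precisely compatibility — through the fact that $\prc(s)$, $s\in c(I)$, partition the edge colours — that turns the orbit into a genuine product $\prod_s W_s$, which is what lets us read off the determinantal scalars $d_s$ from the orbit point via the linear projections $\pi_s$; everything else is bookkeeping.
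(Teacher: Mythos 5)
Your proposal is correct and follows essentially the same route as the paper's proof: decouple the orbit across vertex colours using the fact that the sets $\prc(s)$ partition the edge colours, identify $A(\Gcal,c)\cdot Y$ with a product of affine sets $\prod_s W_s$ inside the linear subspace $\prod_s U_s$ (the paper's $K_s = \RR M_{Y,s}^{(0)}\oplus X_s$), and cut out the $\SL$-orbit as the vanishing locus of $\prod_s \pi_s^{\alpha_s}-1$ (the paper's $\prod_s p_s^{\alpha_s}-1$). The only cosmetic difference is that the paper first records semistability via Theorem~\ref{thm:MLestimationLinDependence}(b) and Proposition~\ref{prop:GeneralStabilityVsMLE}(b), which your argument makes redundant since closedness of the orbit together with $Y\neq 0$ (and invertibility of the elements of $A$) already excludes $0$ from the orbit closure.
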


\begin{proof}
The hypotheses in the statement imply that the log-likelihood $\ell_Y$ is bounded from above, by Theorem~\ref{thm:MLestimationLinDependence}(b). Since $A(\Gcal,c)$ is closed under non-zero scalar multiples, $Y$ is semistable under $A = A(\Gcal,c)_{\SL}$ by Proposition~\ref{prop:GeneralStabilityVsMLE}(b) and Remark~\ref{rem:A-SL-forRDAG}.

We now study the orbit $A \cdot Y$. Let $T$ be the set of diagonal matrices in $A$ and $U$ the unipotent matrices in $A$. Then $A = T \cdot U$ by compatibility and, in fact, any $a \in A$ admits a unique decomposition $a = tu$ with $t \in T$, $u \in U$.
For $s \in c(I)$, recall the construction of $M_{Y,s} \in \RR^{(\beta_s + 1)\times \alpha_s n}$ from Definition~\ref{def:MYs}. Setting $V_s := \RR^{1 \times \alpha_s n}$ we can identify $\RR^{m \times n} \cong \bigoplus_s V_s$ such that the rows of vertex colour $s$ belong to $V_s$. By definition of $M_{Y,s}$, and since the $\prc(s)$ partition $c(E)$, the set $U \cdot Y$ is $H := \prod_s H_s$ with
    \begin{align*}
        H_s = \left\lbrace M_{Y,s}^{(0)} + a_{s,1} M_{Y,s}^{(1)} + \ldots + a_{s,\beta_s} M_{Y,s}^{(\beta_s)} \mid a_{s,t} \in \RR \right\rbrace.
    \end{align*}
The affine space $H_s$ is $M_{Y,s}^{(0)} + X_s$, where $X_s := \mathrm{span} \big\lbrace M_{Y,s}^{(1)},\ldots, M_{Y,s}^{(\beta_s)} \big\rbrace$. Since $M_{Y,s}^{(0)} \notin X_s$ for all $s \in c(I)$ by assumption, we have $0 \notin H_s$ and hence $H_s$ has at least codimension one in $V_s$. We define the linear subspace $K_s := \big( \RR M_{Y,s}^{(0)} \big) \oplus X_s$ of $V_s$. 
Since $T$ acts on 
each $V_s$ with the non-zero scalar of colour $s$, we have
    \begin{align*}
        A \cdot Y = T \cdot (U \cdot Y) = T \cdot H = T \cdot \prod_s H_s
        \subseteq \bigoplus_s K_s \subseteq \bigoplus_s V_s.
    \end{align*}
It suffices to show that $A \cdot Y$ is Zariski-closed in $\bigoplus_s K_s$. Each $H_s$ is an affine subspace of $K_s$ with codimension one. Therefore, there exists a linear form $p_s \in K_s^*$ such that $H_s = \mathbb{V}_{K_s}(p_s - 1)$, where $\mathbb{V}(\cdot)$ denotes the vanishing locus.

We finish the proof by showing that $A \cdot Y = \mathbb{V} \big( \prod_s p_s^{\alpha_s} - 1 \big)$ in $\bigoplus_{s} K_s$.
First, given $W = (W_s)_s \in A\cdot Y = T \cdot H$ we can write $W = a \cdot Z$ with $a \in T$ and $Z = (Z_s)_s \in H$. Then
    \[ \Big(\prod_s p_s^{\alpha_s} \Big)(W) = \prod_s p_s(W_s)^{\alpha_s}
    = \prod_s \big(a_s p_s(Z_s) \big)^{\alpha_s} = \prod_s (a_s)^{\alpha_s} = 1 \]
by the choice of $p_s \in K_s^*$ and since $\det(a) = \prod_s a_s^{\alpha_s} = 1$. On the other hand, suppose $W = (W_s)_s \in \mathbb{V} \big( \prod_s p_s^{\alpha_s} - 1 \big) \subseteq \bigoplus_{s} K_s$. Set $a_s := p_s(W_s)$, then we have $\prod_s a_s^{\alpha_s} = 1$, so the $a_s$ define some $a \in T$. Moreover, $W' := (a^{-1}_s W_s)_s \in H$ by definition of the $a_s$ and hence $W = a \cdot W'$ is contained in $T \cdot H = A \cdot Y$.
\end{proof}

\begin{proposition}\label{prop:RDAGStabilityVsLinDependence}
Consider an RDAG model on $(\Gcal,c)$ with compatible colouring $c$ and $A := A(\Gcal,c)_{\SL}$. Let $Y \in \RR^{m \times n}$ be a sample matrix.
Stability under $A$ relates to linear independence conditions on the matrices $M_{Y,s}$:
$$ \begin{matrix} (a) &  Y \text{ unstable}   & \Leftrightarrow &  M_{Y,s}^{(0)} \in \mathrm{span} \big\lbrace M_{Y,s}^{(i)} : i \in [\beta_s] \big\rbrace \text{ for some } s \in c(I) \\ 
(b) & Y \text{ semistable} &  \Leftrightarrow &  Y 
\text{ polystable} \\ 
    (c) & Y \text{ polystable} & \Leftrightarrow &  M_{Y,s}^{(0)} \notin \mathrm{span}  \big\lbrace M_{Y,s}^{(i)} : i \in [\beta_s] \big\rbrace \text{  for all } s \in c(I) \\
(d) & Y \text{ stable} &  \Leftrightarrow &  M_{Y,s} \text{ has full row rank for all } s \in c(I) \end{matrix} $$
\end{proposition}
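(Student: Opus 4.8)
The plan is to deduce parts (a), (b), (c) from results already in hand, and to prove part (d) by a direct analysis of the stabilising set $A_Y$.

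First I would settle (a). By Remark~\ref{rem:A-SL-forRDAG}, stability of $Y$ under $A = A(\Gcal,c)_{\SL}$ agrees with stability under $A(\Gcal,c)_{\SL}^{\pm}$, so Proposition~\ref{prop:GeneralStabilityVsMLE}(a) gives that $Y$ is unstable if and only if $\ell_Y$ is unbounded from above, which by Theorem~\ref{thm:MLestimationLinDependence}(a) happens if and only if $M_{Y,s}^{(0)} \in \mathrm{span}\{M_{Y,s}^{(i)} : i \in [\beta_s]\}$ for some $s \in c(I)$. For (b) and (c) I would first note that, since every $a \in A$ is invertible, $a\cdot Y \neq 0$ whenever $Y \neq 0$; hence a polystable $Y$ has $0 \notin A\cdot Y = \overline{A\cdot Y}$ and so is semistable. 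Conversely, a semistable $Y$ is non-zero and, by (a), satisfies $M_{Y,s}^{(0)} \notin \mathrm{span}\{M_{Y,s}^{(i)} : i\in[\beta_s]\}$ for all $s$, whence $Y$ is polystable by Lemma~\ref{lem:SemistablePolystable}. This proves (b); part (c) is then the backward implication of Lemma~\ref{lem:SemistablePolystable} (whose hypothesis already forces $Y\neq 0$) together with the chain polystable $\Rightarrow$ semistable $\Rightarrow$ (negation of (a)).

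For (d), assume $Y$ polystable, so by (c) the vector $M_{Y,s}^{(0)}$ lies outside the span of $M_{Y,s}^{(1)},\dots,M_{Y,s}^{(\beta_s)}$ for every $s$. As in the proof of Lemma~\ref{lem:SemistablePolystable}, compatibility provides a unique factorisation $a = tu$ of any $a\in A(\Gcal,c)$ with $t$ diagonal in $A(\Gcal,c)$ and $u$ unipotent in $A(\Gcal,c)$; let $d_s$ be the diagonal entry of $t$ on colour $s$ (so $\prod_s d_s^{\alpha_s}=1$ when $a\in A_{\SL}$) and let $f_t$ be the off-diagonal entries of $u$, indexed by edge colours. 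A blockwise computation in $\RR^{m\times n}\cong\bigoplus_s\RR^{1\times\alpha_s n}$ shows that the rows of $a\cdot Y$ of vertex colour $s$ form the vector $d_s\big(M_{Y,s}^{(0)}+\sum_{t\in\prc(s)}f_t M_{Y,s}^{(t)}\big)$, so $aY=Y$ is equivalent to the equations $(d_s-1)M_{Y,s}^{(0)}+d_s\sum_{t\in\prc(s)}f_t M_{Y,s}^{(t)}=0$ for all $s\in c(I)$. If every $M_{Y,s}$ has full row rank, these equations force $d_s=1$ and all $f_t=0$, i.e. $A_Y=\{\id\}$, so $Y$ is stable. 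If some $M_{Y,s}$ does not have full row rank then, since $M_{Y,s}^{(0)}$ lies outside the span of the others, there is a nontrivial relation $\sum_{t\in\prc(s)}f_t M_{Y,s}^{(t)}=0$; taking the corresponding unipotent matrix with all other edge parameters zero (legitimate precisely because the $\prc(s)$ partition $c(E)$ by compatibility) and scaling the $f_t$ yields a one-parameter family inside $A_Y$, so $Y$ is not stable. Combined with (c), this gives (d).

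The step I expect to be the main obstacle is this last paragraph: one must set up the identification $\RR^{m\times n}\cong\bigoplus_s\RR^{1\times\alpha_s n}$, carry out the factorisation $a=tu$ and the blockwise action computation cleanly, and check that the unipotent perturbations used to populate $A_Y$ really lie in $A(\Gcal,c)_{\SL}$ — all of which hinge on the compatibility hypothesis (disjointness of vertex and edge colours, and the partition of $c(E)$ by the sets $\prc(s)$). Parts (a)--(c) are bookkeeping on top of Proposition~\ref{prop:GeneralStabilityVsMLE}, Theorem~\ref{thm:MLestimationLinDependence} and Lemma~\ref{lem:SemistablePolystable}.
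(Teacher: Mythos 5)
Your proposal is correct and follows essentially the same route as the paper: parts (a)--(c) are assembled from Proposition~\ref{prop:GeneralStabilityVsMLE}, Theorem~\ref{thm:MLestimationLinDependence} and Lemma~\ref{lem:SemistablePolystable} exactly as in the paper's proof, and your part (d) — reducing $aY=Y$ to the blockwise equations $(d_s-1)M_{Y,s}^{(0)}+d_s\sum_t f_t M_{Y,s}^{(t)}=0$, using polystability to force $d_s=1$, and producing an infinite unipotent stabiliser from any dependence among $M_{Y,s}^{(1)},\dots,M_{Y,s}^{(\beta_s)}$ — is the paper's argument with the factorisation $a=tu$ made slightly more explicit. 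No gaps.
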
 

\begin{proof}
Proposition~\ref{prop:GeneralStabilityVsMLE} in combination with Theorem~\ref{thm:MLestimationLinDependence} yields part (a) and the forwards direction of (c), while Lemma~\ref{lem:SemistablePolystable} gives the backwards direction of (c).
We obtain part~(b) as a direct consequence of (a) and (c).

For part (d), it suffices to see that a polystable $Y$ has a trivial stabiliser $A_Y$ if and only if for all $s \in c(I)$ the rows $M_{Y,s}^{(1)},\ldots, M_{Y,s}^{(\beta_s)}$ are linearly independent. So let $Y$ be polystable. By construction of $M_{Y,s}$, a matrix $a \in A$ satisfies $aY = Y$ if and only if
    \begin{equation}\label{eq:stableRDAG}
        a_s M_{Y,s}^{(0)} + \sum_{t \in [\beta_s]} a_{s,t} M_{Y,s}^{(t)} = M_{Y,s}^{(0)},
    \end{equation}
    for all $s \in c(I)$,
where $a_s \in \RR^{\times}$ is the entry of $a$ for vertex colour $s$, and $a_{s,t} \in \RR$ is the entry of $a$ for the parent relationship colour encoded by $(s,t)$, where $t \in [\beta_s]$. We note that \eqref{eq:stableRDAG} implies $a_s = 1$ and $\sum_{t \in [\beta_s]} a_{s,t} M_{Y,s}^{(t)} = 0$, by polystability of $Y$ and part~(c).

If $M_{Y,s}^{(1)},\ldots, M_{Y,s}^{(\beta_s)}$ are linearly independent, then \eqref{eq:stableRDAG} has exactly one solution, namely $a_s = 1$ and $a_{s,t}=0$ for all $t \in [\beta_s]$. Thus, if $M_{Y,s}^{(1)},\ldots, M_{Y,s}^{(\beta_s)}$ are linearly independent for all $s \in c(I)$, then $A_Y = \{\id \}$.
On the other hand, if for some $s \in c(I)$ the rows $M_{Y,s}^{(1)},\ldots, M_{Y,s}^{(\beta_s)}$ are linearly dependent, then $\sum_{t \in [\beta_s]} a_{s,t} M_{Y,s}^{(t)} = 0$ has infinitely many solutions. Distinct solutions give distinct unipotent matrices $a \in A$ by using $a_{s,t}$ as the entry for edge colour $t \in \prc(s)$, and setting all other off-diagonal entries of $a$ to zero. Such a unipotent matrix $a \in A$ satisfies $aY = Y$, since the sets $\prc(s)$ are disjoint, so the $a_{s,t}$ do not affect any rows of $Y$ with a different vertex colour. In conclusion, $A_Y$ is infinite if $M_{Y,s}^{(1)},\ldots, M_{Y,s}^{(\beta_s)}$ are linearly dependent for some $s \in c(I)$.
\end{proof}

\begin{proof}[Proof of Theorem~\ref{thm:RDAGstabilityVsMLE}]
Combine Proposition~\ref{prop:RDAGStabilityVsLinDependence} with Theorem~\ref{thm:MLestimationLinDependence}.
\end{proof} 

We now turn to Proposition~\ref{prop:second-bijection}. As above, $A := A(\Gcal,c)_{\SL}$ for an RDAG with compatible colouring. Denote the set of diagonal (respectively unipotent) matrices in $A$ by $T$ (respectively $U$). By compatibility of the colouring, $A = T \cdot U$ and in fact any $a \in A$ admits a unique factorisation $a = tu$ with $t \in T$, $u \in U$.

\begin{lemma}\label{lem:twoStepRDAG}
Consider the RDAG model on $(\Gcal,c)$ where colouring $c$ is compatible. For $A := A(\Gcal,c)_{\SL}$ write $A = T \cdot U$ as above. If $Y \in \RR^{m \times n}$ is polystable under $A$, then the following hold:
    \begin{itemize}
        \item[(a)] $U \cdot Y$ contains a unique element $\widetilde{Y}$ of minimal norm.
        \item[(b)] For $t \in T$ and $u \in U$, $\|tu \cdot Y\| \geq \|t \cdot \widetilde{Y} \|$ with equality if and only if $u \cdot Y = \widetilde{Y}$.
        \item[(c)]  Let $a, \widetilde{a} \in A$ be such that $a \cdot Y$ and $\widetilde{a} \cdot Y$ are of minimal norm in $A \cdot Y$. Then there is an orthogonal $t \in T$ such that $\;ta \cdot Y = \widetilde{a} \cdot Y$.
    \end{itemize}
\end{lemma}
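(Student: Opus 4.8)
The plan is to carry out the whole argument in the coordinates introduced in the proof of Lemma~\ref{lem:SemistablePolystable}. Recall the identification $\RR^{m\times n}\cong\bigoplus_{s\in c(I)}V_s$ with $V_s=\RR^{1\times\alpha_s n}$, under which $U\cdot Y$ becomes the product of affine subspaces $\prod_s H_s$, where $H_s=M_{Y,s}^{(0)}+X_s$ and $X_s:=\mathrm{span}\{M_{Y,s}^{(1)},\dots,M_{Y,s}^{(\beta_s)}\}$; moreover $T$ acts on $V_s$ by the scalar $t_s$ of colour $s$, and the Frobenius norm decomposes as $\|W\|^2=\sum_s\|W_s\|^2$. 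Polystability of $Y$ gives $M_{Y,s}^{(0)}\notin X_s$ for every $s$ (by Proposition~\ref{prop:RDAGStabilityVsLinDependence}(c), or directly the argument of Lemma~\ref{lem:SemistablePolystable}), hence $0\notin H_s$ and in particular the minimal-norm point of each $H_s$ is non-zero. This last observation is the one place where polystability is essential.

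For (a): each $H_s$ is a flat, on which $\|\cdot\|^2$ is strictly convex, so $H_s$ has a unique point $\widetilde Y_s$ of minimal norm (the foot of the perpendicular from the origin). Since the norm-square is the sum of the norm-squares of the $V_s$-components and the factors of $\prod_s H_s$ are independent, $\widetilde Y:=(\widetilde Y_s)_s$ is the unique minimiser of the norm on $U\cdot Y$. For (b): given $t\in T$, $u\in U$, put $Z:=u\cdot Y$, so $Z_s\in H_s$; then $\|tu\cdot Y\|^2=\sum_s t_s^2\|Z_s\|^2\ge\sum_s t_s^2\|\widetilde Y_s\|^2=\|t\cdot\widetilde Y\|^2$, because $\|Z_s\|\ge\|\widetilde Y_s\|$ and $t_s\ne 0$. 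Equality forces $\|Z_s\|=\|\widetilde Y_s\|$ for all $s$, and then $Z_s=\widetilde Y_s$ by uniqueness of the minimiser in $H_s$, i.e.\ $u\cdot Y=\widetilde Y$.

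For (c): factor $a=t_1u_1$ and $\widetilde a=t_2u_2$ with $t_i\in T$, $u_i\in U$. Since $t_1\cdot\widetilde Y\in T\cdot(U\cdot Y)=A\cdot Y$, part (b) gives $\|a\cdot Y\|=\|t_1u_1\cdot Y\|\ge\|t_1\cdot\widetilde Y\|\ge\|a\cdot Y\|$, so all inequalities are equalities; the equality case of (b) yields $u_1\cdot Y=\widetilde Y$, hence $a\cdot Y=t_1\cdot\widetilde Y$, and likewise $\widetilde a\cdot Y=t_2\cdot\widetilde Y$. Therefore both $t_1$ and $t_2$ minimise $t\mapsto\|t\cdot\widetilde Y\|^2=\sum_s t_s^2 c_s$ over $T$, where $c_s:=\|\widetilde Y_s\|^2$. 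Here is the crux of the argument: polystability gives $c_s>0$ for every $s$, so on substituting $x_s:=t_s^2>0$ the problem becomes minimising $\sum_s c_s x_s$ subject to $\prod_s x_s^{\alpha_s}=1$, and weighted AM--GM with weights $\alpha_s$ shows the minimum is attained exactly when $c_s x_s/\alpha_s$ is independent of $s$; together with the constraint this determines $x$ uniquely. Hence $t_1^2=t_2^2$ entrywise, so $t_s:=(t_2)_s/(t_1)_s\in\{\pm 1\}$ defines an orthogonal diagonal matrix with $\prod_s t_s^{\alpha_s}=\det(t_2)/\det(t_1)=1$, so $t\in T$. Finally $ta\cdot Y=tt_1u_1\cdot Y=tt_1\cdot\widetilde Y=t_2\cdot\widetilde Y=\widetilde a\cdot Y$, as required.

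I expect the weighted AM--GM uniqueness step --- and the accompanying observation that polystability is precisely what forces every $c_s$ to be strictly positive (so that the minimum over $T$ is attained and unique) --- to be the one substantive point; the rest is bookkeeping with the orthogonal decomposition $\RR^{m\times n}\cong\bigoplus_s V_s$ and the factorisation $A=T\cdot U$.
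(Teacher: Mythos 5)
Your proof is correct. Parts (a) and (b) follow the same route as the paper: decompose $\|uY\|^2$ over the vertex colours using that the sets $\prc(s)$ partition the edge colours, take the unique closest point of each affine flat $H_s = M_{Y,s}^{(0)} + X_s$ to the origin, and propagate the strict inequality through the scaling by $t_s$. (One small remark: uniqueness in (a) needs no polystability at all --- an affine subspace always has a unique point of minimal norm --- and indeed the paper does not invoke it there; you correctly isolate the role of polystability as forcing $0 \notin H_s$, i.e.\ $c_s = \|\widetilde Y_s\|^2 > 0$, which is what part (c) genuinely needs.)

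For part (c) you take a genuinely different and more elementary route. The paper reduces, exactly as you do, to the observation that $a\cdot Y$ and $\widetilde a \cdot Y$ are both minimal-norm elements of the torus orbit $T\cdot\widetilde Y$, but then concludes by citing the real Kempf--Ness theorem for the reductive group $T \cong \{(t_s)\mid \prod_s t_s^{\alpha_s}=1\}$, which says that two minimal-norm points of a closed orbit differ by an element of the maximal compact subgroup (here: the $\pm1$ diagonal matrices in $T$). You instead prove this special case of Kempf--Ness by hand: substituting $x_s = t_s^2$ turns the problem into minimising $\sum_s c_s x_s$ subject to $\prod_s x_s^{\alpha_s}=1$ with all $c_s>0$, and the equality case of weighted AM--GM pins down the minimiser uniquely, whence $t_{1,s}^2 = t_{2,s}^2$ and the quotient $t_s = t_{2,s}/t_{1,s}$ is an orthogonal element of $T$ (its determinant is $\det(t_2)/\det(t_1)=1$, so it does lie in $T$). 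Your argument is self-contained and avoids importing the Kempf--Ness machinery, at the cost of a short explicit optimisation; the paper's version is shorter on the page but leans on an external theorem. Both are valid, and your computation in fact re-derives precisely the instance of Kempf--Ness the paper uses.
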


\begin{proof}
For part~(a), we recall that the $\prc(s)$, $s \in c(I)$, partition the edge colours $c(E)$. Therefore, when minimising
    \[ \|uY\|^2 = \sum_{s \in c(I)} \Big\| M_{uY,s}^{(0)} \Big\|^2 = \sum_{s \in c(I)} \Big\| M_{Y,s}^{(0)} + \sum_{t \in [\beta_s]} u_{s,t} M_{Y,s}^{(t)} \Big\|^2 \]
over $u \in U$ we can minimise each summand separately. For each $s \in c(I)$, the affine space $M_{Y,s}^{(0)} + \mathrm{span}\big\lbrace M_{Y,s}^{(t)} \colon t \in [\beta_s] \big\rbrace$ has a unique element of minimal norm, call it $M_s$. Hence, $U \cdot Y$ has a unique element of minimal norm, $\widetilde{Y}$, determined by $M_{\widetilde{Y},s}^{(0)} = M_s$ for all $s \in c(I)$. (Note that there may be several $u \in U$ with $uY = \widetilde{Y}$.)

To prove part~(b), we use (the proof of) part~(a) to obtain
    \begin{equation}\label{eq:twoStepRDAG}
        \big\| M_{tuY,s}^{(0)} \big\|^2 = \big\| t_s \, M_{uY,s}^{(0)} \big\|^2
    = |t_s|\, \big\| M_{uY,s}^{(0)} \big\|^2 \geq |t_s|\, \big\| M_{\widetilde{Y},s}^{(0)} \big\|^2 = \big\| M_{t \widetilde{Y},s}^{(0)} \big\|^2
    \end{equation}
for all $s \in c(I)$, hence $\|tu \cdot Y \| \geq \| t\widetilde{Y} \|$. The latter inequality is strict if and only if there is strict inequality in \eqref{eq:twoStepRDAG} for at least one $s$. By uniqueness of $\widetilde{Y}$, this is the case if and only if $uY \neq \widetilde{Y}$.

For~(c), write $a = tu$ with $t \in T$ and $u \in U$. Since $aY$ is of minimal norm in $A \cdot Y$, we deduce $uY = \widetilde{Y}$ using~(b). Thus, $aY \in T \cdot \widetilde{Y}$ and similarly $\widetilde{a}Y \in T \cdot \widetilde{Y}$. As $T \cdot \widetilde{Y} \subseteq A \cdot Y$ the matrices $aY$ and $\widetilde{a}Y$ are also of minimal norm in $T \cdot \widetilde{Y}$. We note that $T$ is a group isomorphic to the reductive group $\{ (t_s)_{s \in c(I)} \mid t_s \in \RR^{\times}, \; \prod_s t_s^{\alpha_s} = 1\}$. Hence the Kempf-Ness theorem, see \cite[Theorem~2.2]{amendola2020invariant}, for the action of $T$ implies that there is some orthogonal $t \in T$ that relates the minimal norm elements $aY$ and $\widetilde{a}Y$ in $T \cdot \widetilde{Y}$.
\end{proof}

We conclude this appendix with a proof of Proposition~\ref{prop:second-bijection}.

\begin{proof}[Proof of Proposition~\ref{prop:second-bijection}]
Recall that $A = A(\Gcal,c)_{\SL} = T \cdot U$, where $T$ is the diagonal matrices in $A$, and $U$ the unipotent matrices in $A$. If $aY = Y$, then for all $s \in c(I)$,
    \[ M_{Y,s}^{0} = a_s M_{Y,s}^{(0)} + \sum_{t \in [\beta_s]} a_{s,t} M_{Y,s}^{(t)}. \]
We have $M_{Y,s}^{(0)} \notin \mathrm{span} \big\lbrace M_{Y,s}^{(t)} \colon t \in [\beta_s] \big\rbrace$, since $Y$ is polystable. Hence $a_s = 1$ for all $s$, i.e. $a \in U$ and therefore $A_Y = U_Y$. We set $N_Y := U_Y -\id$, which consists of strictly upper triangular matrices. It suffices to show that for fixed MLE $\lambda a\T a$ the map
    \begin{align*}
        \varphi \colon N_Y &\to \{ \text{MLEs given } Y \} \\
        b &\mapsto \lambda (a + b)\T (a + b)
    \end{align*}
is well-defined and bijective. Note that $bY = 0$ for any $b \in N_Y$. Therefore, $(a + b)Y = aY$ is of minimal norm in $A \cdot Y$ and thus $\varphi(b)$ is an MLE by Proposition~\ref{prop:GeneralStabilityVsMLE}.

For surjectivity, let $\lambda \widetilde{a}\T \widetilde{a}$ be another MLE given $Y$. Then $aY$ and $\widetilde{a} Y$ are of minimal norm in $A \cdot Y$, hence there is an orthogonal $t \in T$ with $aY = t\widetilde{a}Y$ by Lemma~\ref{lem:twoStepRDAG}(c). We set $b := t \widetilde{a} - a$ so that $b \cdot Y = 0$ and $(\id + b)Y = Y$. By compatibility of the colouring we have $t \widetilde{a} \in A$ and thus all entries of $b = t \widetilde{a} - a$ obey the colouring $c$. However, $bY = 0$ implies $b_s = 0$ for all $s$ by polystability of $Y$, hence $b \in N_Y$. We compute $\varphi(b) = \lambda (t\widetilde{a})\T (t \widetilde{a}) = \lambda \widetilde{a}\T \widetilde{a}$ using orthogonality of $t$.

For injectivity, let $b, b' \in N_Y$ be such that $\varphi(b) = \varphi(b')$. Let $t \in T$ be defined by $t_s = 1$ if $a_s > 0$ and $t_s = -1$ if $a_s < 0$. Then $t$ is orthogonal and thus
    \[ (ta + tb)\T (ta + tb) = (a+b)\T (a+b).\]
Similarly, $(ta + tb')\T (ta + tb') = (a+b')\T (a+b')$. Then $\varphi(b) = \varphi(b')$ implies
    \begin{equation}\label{eq:bijectionCholesky}
        (ta + tb)\T (ta + tb) = (ta + tb')\T (ta + tb').
    \end{equation}
Moreover, $tb$, $tb'$ are strictly upper triangular and $ta \in A$ has positive diagonal entries, by construction of $t$. Hence, applying uniqueness of the Cholesky decomposition to \eqref{eq:bijectionCholesky} gives $ta + tb = ta + tb'$, and we deduce $b = b'$.
\end{proof}

\section{Connections to Gaussian group models}
\label{sec:connections_GGM}

A model is a Gaussian group model if it is equal to $\Mcal_A$, see~\eqref{eqn:ME}, where the set $A$ a group. In this case, the second term in the log-likelihood~\eqref{eqn:likelihood_e} is the minimisation of the norm over a group orbit. This perspective was used in~\cite{amendola2020invariant} to relate existence of the MLE to notions of stability under a group action. In this appendix, we characterise when the set of matrices $A(\Gcal,c)$ from~\eqref{eqn:Agc} is a group. 
We use Popov's criteron to study stability, and give our third and final description of the set of MLEs in an RDAG model.

\subsection{The butterfly criterion}

Suppose $(\mathcal{G},c)$ is a coloured DAG with compatible $c$. We give necessary and sufficient conditions for $A(\mathcal{G},c)$ to be a group. We call a subset $A \subseteq \GL_m$ {\em linear} if $A = L \cap \GL_m$, where $L$ is a linear subspace of $m \times m$ matrices.

\begin{lemma}\label{lem:GroupOnlyMultiplication}
Let $A\subseteq \GL_{m}$ be a linear subspace of matrices containing the identity matrix. Then $A$ is a group if and only if it is closed under multiplication.
\end{lemma}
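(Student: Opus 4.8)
The plan is to dispatch the easy direction first: if $A$ is a group then it is closed under multiplication by the group axioms, and there is nothing to prove. For the converse, assume $A$ is closed under multiplication. Matrix multiplication is associative and $\id \in A$ serves as the identity, so to conclude that $A$ is a group it remains only to show that $A$ is closed under taking inverses.

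The key observation is that, since $\id \in A$ and $A$ is closed under multiplication, every power $a^k$ with $k \ge 0$ of a given $a \in A$ lies in $A$, and hence in the linear subspace $L$ with $A = L \cap \GL_m$. Now invoke the Cayley--Hamilton theorem: write the characteristic polynomial of $a$ as $p(t) = t^m - c_{m-1} t^{m-1} - \cdots - c_1 t - c_0$, so that $p(a) = 0$. Since $p(t) = \det(t\,\id - a)$ we have $p(0) = (-1)^m \det(a)$, whence $c_0 = (-1)^{m+1}\det(a) \neq 0$ because $a$ is invertible. Rearranging $p(a) = 0$ and dividing by $c_0$ gives the inversion formula $a^{-1} = c_0^{-1}\bigl(a^{m-1} - c_{m-1} a^{m-2} - \cdots - c_1 \id\bigr)$, which exhibits $a^{-1}$ as an $\RR$-linear combination of $\id, a, \ldots, a^{m-1}$.

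Each of $\id, a, \ldots, a^{m-1}$ lies in $L$, and $L$ is a linear subspace, so $a^{-1} \in L$; since $a^{-1}$ is also invertible, $a^{-1} \in L \cap \GL_m = A$. Thus $A$ is closed under inverses, completing the proof.

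There is no real obstacle here; the only points demanding a little care are that invertibility of $a$ is exactly what guarantees $c_0 \neq 0$ so the inversion formula makes sense, and that the hypothesis is genuinely about the ambient \emph{linear} subspace $L$ rather than about $A$ itself --- it is linearity of $L$ (together with invertibility of $a^{-1}$) that lets us recognise the linear combination $a^{-1}$ as an element of $A$.
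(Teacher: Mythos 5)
Your proof is correct and follows essentially the same route as the paper: both use the Cayley--Hamilton theorem to write $a^{-1}$ as a polynomial in $a$ with constant term controlled by $\det(a)\neq 0$, and then use linearity of the ambient subspace $L$ (together with invertibility of $a^{-1}$) to conclude $a^{-1}\in L\cap\GL_m=A$. Your explicit remark that closure under multiplication and $\id\in A$ put all powers $a^k$ in $L$ is a point the paper leaves implicit, but the argument is the same.
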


\begin{proof}
A group is closed under multiplication. Conversely, if $A$ is closed under multiplication, to be a group it must also be closed under inverses. For a matrix $a \in A$, its characteristic polynomial is $f_a(t) = t^m + c_1 t^{m-1} + \dots + c_m$. We know $c_m \neq 0$ because $c_m$ is (up to sign) the determinant of $a$. This means $a\cdot \frac{1}{-c_m}(a^{m-1} + c_1 a^{m-2} + \dots + c_{m-1} \id) = \id$. 
Suppose $L$ is the linear subspace of matrices such that $A = L \cap \GL_m$. 
Then $a^{-1} = \frac{1}{-c_m}(a^{m-1} + c_1 a^{m-2} + \dots + c_{m-1} \id) \in L \cap \GL_m = A$, since $\id \in A$.
\end{proof}

For a pair of vertices $i,j \in I$, define $b(ij) = \{k\ |\ i \leftarrow k, k \leftarrow j \in \mathcal{G}\}$. Let $\mathcal{G}_{b(ij)}$ denote the coloured subgraph on $\{i\} \cup \{j\} \cup b(ij)$, with edges $i \leftarrow k, k \leftarrow j$ for each $k \in b(ij)$, and colours inherited from $c$. We call $\Gcal_{b(ij)}$ a \emph{butterfly graph}.

\begin{proposition}[The butterfly criterion]
\label{prop:butterfly} 
Consider the RDAG model on $(\Gcal,c)$ where colouring $c$ is compatible. The set $A(\mathcal{G},c)$ is a group if and only if
\begin{itemize}
    \item[(a)] $\mathcal{G}$ is transitive; and
    \item[(b)] if $c(ij) = c(kl)$ for edges $j \to i, \; l \to k$ in $\Gcal$, then $\mathcal{G}_{b(ij)} \simeq \mathcal{G}_{b(kl)}$.
\end{itemize}
\end{proposition}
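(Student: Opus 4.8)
The plan is to use Lemma~\ref{lem:GroupOnlyMultiplication}: since $A(\Gcal,c)$ is the intersection of a linear subspace of matrices with $\GL_m$ and contains $\id$, it is a group if and only if it is closed under multiplication. So I want to characterise exactly when $a, a' \in A(\Gcal,c)$ implies $aa' \in A(\Gcal,c)$. Writing out the $(i,j)$ entry, $(aa')_{ij} = \sum_k a_{ik} a'_{kj}$, I would first analyse the \emph{support} condition (when is $(aa')_{ij} = 0$ forced for $j \not\to i$, $i \neq j$) and then the \emph{colour} condition (when is $(aa')_{ij}$ as a polynomial in the colour-indeterminates equal to $(aa')_{kl}$ whenever $c(ij) = c(kl)$), treating $a,a'$ as generic matrices in $A(\Gcal,c)$ with one indeterminate per vertex colour and one per edge colour.

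For the support condition: a nonzero term $a_{ik}a'_{kj}$ in $(aa')_{ij}$ requires $k=i$ or $i\leftarrow k$, and $k=j$ or $k\leftarrow j$. If $i\neq j$ and $j\not\to i$, the surviving terms come from $k$ with $i\leftarrow k\leftarrow j$ (the set $b(ij)$, when $i \ne j$), plus the boundary terms $a_{ii}a'_{ij}$ (needs $i\leftarrow j$, excluded) and $a_{ij}a'_{jj}$ (needs $i \leftarrow j$, excluded). So $(aa')_{ij} = \sum_{k \in b(ij)} a_{ik}a'_{kj}$, and for this to vanish identically in the indeterminates we need $b(ij) = \emptyset$ whenever $i \ne j$ and $j\not\to i$ — which is precisely transitivity of $\Gcal$ (condition (a)). Conversely if $\Gcal$ is not transitive, pick such an $i,j$ with $k \in b(ij)$ and choose $a, a'$ making $a_{ik}a'_{kj} \ne 0$ while killing the other terms, so $aa' \notin A(\Gcal)$.

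For the colour condition, assuming transitivity: for an edge $j \to i$, $(aa')_{ij} = a_{ii}a'_{ij} + a_{ij}a'_{jj} + \sum_{k \in b(ij)} a_{ik}a'_{kj}$, where $b(ij) = \ch(j)\cap\pa(i)$ (all of $\ch(i)\cap\ch(j)$-type indices, but now $i \leftarrow k$). Using compatibility, $c(ij) = c(kl)$ forces $c(i) = c(k)$, hence $a_{ii} = a_{kk}$ and $a'_{ii} = a'_{kk}$; and $\lambda$-type equalities give $a_{ij} = a_{kl}$, $a'_{ij} = a'_{kl}$ as indeterminates. So the "boundary" parts $a_{ii}a'_{ij} + a_{ij}a'_{jj}$ of $(aa')_{ij}$ and $(aa')_{kl}$ already agree provided $a_{jj} = a_{ll}$ and $a'_{jj} = a'_{ll}$ — but compatibility does \emph{not} give $c(j) = c(l)$, so this is \emph{not} automatic. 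This is the point the remark after Definition~\ref{dfn:compatibleColoring} and the footnote-style comment "why we can collapse vertices $i$ and $j$" is flagging; I expect this to be the main subtlety. The resolution: one collapses $i$ and $j$ into a single vertex in the definition of the butterfly graph (just as $\Gcal_{(j\to i)}$ collapses them), so that $\Gcal_{b(ij)}\simeq\Gcal_{b(kl)}$ is read only on the "wing" edges $k$; one then argues that the $a_{jj}$ versus $a_{ll}$ discrepancy is harmless because the term $a_{ij}a'_{jj}$ contributes a monomial (edge-colour of $ij$) times (vertex-colour of $j$) that cannot be matched by anything except the corresponding term for $kl$, and if the models are to be equal the vertex-variable of $j$ and of $l$ play interchangeable roles in these products — more precisely, one should check that the map on indeterminates induced by the isomorphism $\Gcal_{b(ij)}\simeq\Gcal_{b(kl)}$ already identifies whatever is needed, and any genuine failure of $c(j)=c(l)$ either does no harm (because the relevant monomials still match formally) or is already forced by a different colour equality. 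The remaining comparison is between $(aa')_{ij}$ and $(aa')_{kl}$ via the wing sums $\sum_{k'\in b(ij)} a_{ik'}a'_{k'j}$: as in the proof of Proposition~\ref{prop:RDAGinRCON}, each variable appears in at most one summand, so the two sums are equal as polynomials iff there is a colour-preserving bijection $b(ij)\leftrightarrow b(kl)$ matching the two edge colours at each wing vertex — which by compatibility also matches the wing vertex colours — i.e. iff $\Gcal_{b(ij)}\simeq\Gcal_{b(kl)}$, condition (b). Conversely, if (b) fails one produces $a,a'\in A(\Gcal,c)$ with $(aa')_{ij}\neq(aa')_{kl}$, exhibiting $aa'\notin A(\Gcal,c)$.

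Finally I would also check the entries $(aa')_{ii}$ for vertices of the same colour: $(aa')_{ii} = a_{ii}a'_{ii}$ (no $b(ii)$ contributions since there are no loops; here I take $b(ii) = \emptyset$ by the same analysis), and $c(i) = c(j)$ gives $a_{ii}=a_{jj}$, $a'_{ii}=a'_{jj}$, so $(aa')_{ii}=(aa')_{jj}$ automatically — no extra condition. Assembling: closure under multiplication $\iff$ (a) and (b), and then Lemma~\ref{lem:GroupOnlyMultiplication} finishes it. The main obstacle is the bookkeeping around the parent vertices $j,l$ when $c(j) \ne c(l)$ and justifying rigorously that collapsing $i$ and $j$ in the butterfly graph loses no information; everything else is a variable-counting argument identical in spirit to Proposition~\ref{prop:RDAGinRCON}.
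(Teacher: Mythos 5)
Your overall strategy is exactly the paper's: invoke Lemma~\ref{lem:GroupOnlyMultiplication} to reduce to closure under multiplication, then analyse $(aa')_{ij}$ as a polynomial identity in one indeterminate per colour. Your treatment of the support condition (giving transitivity), of the diagonal entries, and of the wing sums $\sum_{p\in b(ij)}a_{ip}a'_{pj}$ is correct and matches the paper. But there is a genuine gap at precisely the point you flag as ``the main subtlety.'' You propose to resolve the $a'_{jj}$ versus $a'_{ll}$ mismatch by collapsing $i$ and $j$ in the butterfly graph and then arguing the discrepancy is ``harmless'' or ``already forced by a different colour equality.'' Both halves of this are wrong. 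First, the paper's $\Gcal_{b(ij)}$ is defined on the vertex set $\{i\}\cup\{j\}\cup b(ij)$ with colours inherited from $c$ --- the parents $j$ and $l$ are \emph{not} collapsed (unlike in $\Gcal_{(j\to i)}$ from Section~\ref{sec:RDAG_vs_RCON}), so the isomorphism in condition (b) already records $c(j)=c(l)$. Second, the condition $c(j)=c(l)$ is genuinely necessary and is \emph{not} implied by anything else: in the identity $(aa')_{ij}=(aa')_{kl}$, the monomial $a_{ij}a'_{jj}$ is a product of an edge-colour indeterminate and a vertex-colour indeterminate, whereas every wing term is a product of two edge-colour indeterminates and the term $a_{ii}a'_{ij}$ is vertex-times-edge in the other order; by compatibility (i) these monomial types are disjoint, so $a_{ij}a'_{jj}$ can only be matched by $a_{kl}a'_{ll}$, forcing $a'_{jj}=a'_{ll}$ as indeterminates, i.e.\ $c(j)=c(l)$. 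This is how the paper argues it.

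That the collapsed reading of (b) makes the proposition false is seen concretely: take $\Gcal$ with vertices $1,2,3$, edges $2\to 1$ and $3\to 1$ both of the same colour, and three distinct vertex colours. The colouring is compatible, $\Gcal$ is transitive, and $b(12)=b(13)=\emptyset$, so your collapsed version of (b) holds vacuously; yet for generic $a,a'\in A(\Gcal,c)$ one has $(aa')_{12}=a_{11}a'_{12}+a_{12}a'_{22}$ and $(aa')_{13}=a_{11}a'_{13}+a_{13}a'_{33}$, which differ because $a'_{22}\neq a'_{33}$ is permitted. So $A(\Gcal,c)$ is not closed under multiplication, while the paper's (b) correctly fails here since $\Gcal_{b(12)}$ and $\Gcal_{b(13)}$ carry different parent vertex colours. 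To repair your argument, simply keep $j$ and $l$ as coloured vertices of the butterfly graphs and note that the forced identity $a'_{jj}=a'_{ll}$ is exactly the extra datum the isomorphism $\Gcal_{b(ij)}\simeq\Gcal_{b(kl)}$ supplies; the rest of your variable-matching argument then goes through.
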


\begin{proof}
Observe that $A(\mathcal{G},c)$ is a group if and only if it is closed under multiplication, since it is a linear subspace of $\GL_m$ that contains the identity matrix.
Hence, by Lemma~\ref{lem:GroupOnlyMultiplication}, we need to characterize when $A(\mathcal{G},c)$ is closed under multiplication. We have $gh \in A(\mathcal{G},c)$ for $g, h \in A(\mathcal{G},c)$ if and only if
\begin{enumerate}
    \item $(gh)_{ii} = (gh)_{jj}$ whenever $c(i) = c(j)$;
    \item $(gh)_{ij} = (gh)_{kl}$ whenever $j \to i$, $l \to k$ in $\Gcal$ have $c(ij) = c(kl)$; and
    \item $(gh)_{ij} = 0$ whenever $j \not \to i$ in $\mathcal{G}$.
    \end{enumerate} 

For $(1)$, observe that $(gh)_{ii} = g_{ii} h_{ii}$. Thus, if $c(i) = c(j)$ then $(gh)_{ii} = (gh)_{jj}$. 
For (2), take $j \to i$, $l \to k$ in $\Gcal$ with $c(ij) = c(kl)$. Then
    \[ (gh)_{ij} = g_{ii} h_{ij} + g_{ij} h_{jj} + \sum_{p \in b(ij)} g_{ip} h_{pj}
    \quad \text{ and } \quad
    (gh)_{kl} = g_{kk} h_{kl} + g_{kl} h_{ll} + \sum_{q \in b(kl)} g_{kq} h_{ql}, \]
hence $(gh)_{ij} = (gh)_{kl}$ if $\mathcal{G}_{b(ij)} \simeq \mathcal{G}_{b(kl)}$. Conversely, assume $(gh)_{ij} = (gh)_{kl}$ as a polynomial identity in the unknown entries of matrices $g$ and $h$.
By compatibility, $c(i) = c(k)$, so $g_{ii} h_{ij} = g_{kk} h_{kl}$.
Vertex and edge colours are disjoint and the sums over $b(ij)$ and $b(kl)$ only involve edge colours. Thus, $(gh)_{ij} = (gh)_{kl}$ implies $g_{ij} h_{jj} = g_{kl} h_{ll}$, so $h_{jj} = h_{ll}$, and the sum over $b(ij)$ must equal the sum over $b(kl)$. This means $c(j) = c(l)$, and the two collections $(c(ip),c(pj)), p \in b(ij)$ and $(c(kq),c(ql)), q \in b(kl)$ of \emph{ordered} pairs counted with multiplicity agree. Compatibility ensures the correct colours on the vertices in $b(ij)$ and $b(kl)$ as well, hence $\mathcal{G}_{b(ij)} \simeq \mathcal{G}_{b(kl)}$.

For $(3)$, observe that if $j \not \to i$ in $\mathcal{G}$ then $g_{ij} = 0 = h_{ij}$ and therefore $(gh)_{ij} = \sum_{k \in b(ij)} g_{ik} h_{kj}$. The latter is zero for all $g, h \in A(\Gcal,c)$ if and only if $b(ij) = \emptyset$. Thus, condition~$(3)$ is equivalent to the following: if $j \not \to i$ in $\Gcal$, then there does not exist $k \in I$ with $j \to k$ and $k \to i$ in $\Gcal$, i.e. $\mathcal{G}$ must be transitive, by contraposition.
We have shown that $(1),(2)$ and $(3)$ are satisfied if and only if conditions~(a) and (b) hold.
\end{proof}

\begin{example}
Surprisingly, two graphs can have all the same butterfly graphs without being isomorphic. We present an example. 
Consider the following coloured graph with 10 black (square) vertices, and edges that are red (solid), green (squiggly), orange (dashed) or brown (dotted).
\begin{center}
    \begin{tikzcd}[row sep = small, decoration={snake,amplitude=1pt}]
        & \squared{$c_1$} \ar[lddd, Maroon, dotted, bend right = 30] & & \squared{$b_1$} \ar[ll, red] \ar[lldddddd, orange, dashed] \ar[lldddd, OliveGreen, decorate] & \\
        
        & & & & \\
        
       & \squared{$c_2$} \ar[ld, Maroon, dotted] & & \squared{$b_2$} \ar[ll, red] \ar[lluu, orange, dashed] \ar[lldddd, OliveGreen, decorate] & \\
        
        \squared{$d_1$} & & & & \squared{$a_1$} \ar[luuu, Maroon, dotted, bend right = 30] \ar[lu, Maroon, dotted] \ar[ld, Maroon, dotted] \ar[lddd, Maroon, dotted, bend left = 30]  \\

        & \squared{$c_3$} \ar[lu, Maroon, dotted] & & \squared{$b_3$} \ar[ll, red] \ar[lluu, orange, dashed] \ar[lluuuu, OliveGreen, decorate] & \\

         & & & &  \\
        
        & \squared{$c_4$} \ar[luuu, Maroon, dotted, bend left = 30] & & \squared{$b_4$} \ar[ll, red] \ar[lluu, orange, dashed] \ar[lluuuu, OliveGreen, decorate] & 
    \end{tikzcd}
\end{center}
We add some further edges:  four purple edges $a_1 \to c_i$, four blue edges $b_i \to d_1$, and a yellow edge $a_1 \to d_1$.
Now consider the graph obtained by exchanging the green (squiggly) and orange (dashed) edges.

The butterfly graphs for the two graphs are the same, as follows. On the yellow edge, the butterfly graphs both have four paths consisting of a brown edge followed by a blue edge, and four that are a purple edge followed by a brown edge. 
Similarly, we can check the butterfly graphs at the other edge colours.

However, the two coloured graphs are not isomorphic. Indeed, the only way to get an isomorphism is to permute the b-layer and the c-layer. The red (solid) edges give the identity permutation, the orange (dashed) edges give the cycle $\sigma = (1 \; 4 \; 3 \; 2)$, and the green (squiggly) edges give $\sigma^2$. Hence an isomorphism would need to consist of permutations $\mu_1$ and $\mu_2$ of $\{1,2,3,4\}$ with $\mu_1 {\rm id} \mu_2 = {\rm id}$, $\mu_1 \sigma \mu_2 = \sigma^2$, $\mu_1 \sigma^2 \mu_2 = \sigma$. The first condition implies $\mu_2 = \mu_1^{-1}$, hence $\sigma$ and $\sigma^2$ need to be simultaneously conjugate to $\sigma^2$ and $\sigma$ respectively. This implies $(\sigma^2)^2 = \sigma$, a contradiction because $\sigma^4 = {\rm id}$.
\end{example}

\subsection{Popov's criterion}
\label{sec:popov} 

If $A(\Gcal,c)$ is a group, we can prove the important Lemma~\ref{lem:SemistablePolystable} differently, via a criterion of Popov \cite[Theorem~4]{popov1989closed}. The criterion characterises when an orbit under a connected solvable group is closed, provided the underlying field is algebraically closed. Due to the latter assumption, we work in this subsection with RDAG models defined over the complex numbers, and note that many of our results and proofs carry over to the complex case, see Remark~\ref{rem:complexRDAG}. We start by describing Popov's criterion for the group $G := A(\Gcal,c)_{\SL}$.

Since $G \subseteq \GL_m(\CC)$ is a group of invertible upper triangular matrices, it is solvable. We decompose $G = T \cdot U \subseteq \GL_m$ as a semi-direct product, where $T$ is the subgroup of diagonal matrices in $G$, and $U$ is the subgroup of unipotent matrices in $G$. The group $G$ acts on $\CC^{m \times n}$ by left-multiplication. Let $f_{k,l} \in \CC[\CC^{m \times n}]$, $k \in [m], \, l \in [n]$, be the coordinate functions on $\CC^{m \times n}$. Let $x_s$ be the coordinate function corresponding to vertex colour $s \in c(I)$, and let $x_{s,t}$ be the coordinate function for the edge colour $t \in \prc(s)$. Given a tuple of samples $Y \in \CC^{m \times n}$ we consider the orbit map
    \[ \begin{matrix} \mu_{G \cdot Y} \colon & G  & \to & \CC^{m \times n} \\ &  g & \mapsto & g \cdot Y \end{matrix} \qquad \text{or, on coordinate rings,} \qquad \mu_{G \cdot Y}^*(f_{k,l}) = \sum_{j=1}^m x_{c(kj)} Y_{j,l}. \]
We define
    \[ R_Y := \mu_{G \cdot Y}^* \big( \CC[\CC^{m \times n}] \big) = \CC \Big[ \sum_{j=1}^m Y_{j,l} \, x_{c(kj)} \, \Big\vert \, k \in [m], l \in [n] \Big] \subseteq \CC[G]. \]
Since $R_Y$ is a $\CC$-algebra, we obtain the semigroup
    \begin{align*}
        \fX_{G \cdot Y} := \bigg\lbrace (d_s)_{s \in c(I)} \in \fX(T) \, \Big\vert \, \prod_{s \in c(I)} x_{s}^{d_s} \in R_Y \bigg\rbrace,
    \end{align*}
where $\fX(T) \cong \ZZ^{|c(I)|} / \big( \ZZ \cdot (\alpha_s)_{s \in c(I)} \big)$ is the character group of $T$.

\begin{theorem}[{Popov's Criterion,~\cite[Theorem~4]{popov1989closed}}]\label{thm:PopovCriterion}
Let $G$ and $Y$ be as above. The orbit $G \cdot Y$ is Zariski closed if and only if $\fX_{G \cdot Y}$ is a group.
\end{theorem}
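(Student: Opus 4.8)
\emph{Proof proposal.} The plan is to deduce the statement directly from Popov's general criterion \cite[Theorem~4]{popov1989closed} on closed orbits of connected solvable algebraic groups, by checking that our situation meets its hypotheses and that the explicitly defined objects $R_Y$ and $\fX_{G\cdot Y}$ coincide with those in Popov's formulation. Throughout we work over $\CC$ (as in Remark~\ref{rem:complexRDAG}), since Popov's result needs an algebraically closed base field, and we assume $A(\Gcal,c)$ is a group, i.e.\ that conditions (a) and (b) of Proposition~\ref{prop:butterfly} hold, so that $G := A(\Gcal,c)_{\SL}$ really is a group.

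First I would verify the hypotheses. The group $G$ consists of invertible upper triangular matrices by the chosen ordering on the vertices of $\Gcal$, hence is solvable; compatibility of $c$ gives, exactly as in the proof of Lemma~\ref{lem:SemistablePolystable}, the semidirect product decomposition $G = T\ltimes U$, where $T$ is the group of diagonal matrices in $G$, cut out inside the full diagonal torus by $\prod_{s\in c(I)} x_s^{\alpha_s}=1$, and $U$, the group of unipotent matrices in $G$, is the unipotent radical and is a vector group with one free coordinate $x_{s,t}$ per edge colour (the $\prc(s)$ partitioning $c(E)$ by compatibility). One must also address connectedness: $U$ is connected, so this reduces to connectedness of $T$, which holds when $(\alpha_s)_s$ is primitive; otherwise one passes to the identity component $G^{\circ}=T^{\circ}\ltimes U$ and uses that $G\cdot Y$ is Zariski closed iff $G^{\circ}\cdot Y$ is (as $G\cdot Y$ is a finite union of translates of $G^{\circ}\cdot Y$, all of the same dimension), adjusting the character group accordingly --- a routine but slightly fiddly bookkeeping point.

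Second, and this is where I expect the real work, I would match Popov's weight semigroup with $\fX_{G\cdot Y}$. The orbit map $\mu_{G\cdot Y}\colon g\mapsto g\cdot Y$ pulls coordinate functions back by $\mu_{G\cdot Y}^{*}(f_{k,l})=\sum_{j=1}^{m} x_{c(kj)}Y_{j,l}$, so its image is precisely the subalgebra $R_Y\subseteq\CC[G]$ written in the excerpt. Writing $\CC[G]\cong\CC[U]\otimes\CC[T]$ and letting $T$ act on $\CC[G]$ by the translation action governing Popov's decomposition, $R_Y$ is $T$-stable and splits into weight spaces indexed by $\fX(T)$; Popov's criterion then asserts $G\cdot Y$ is Zariski closed iff the set of $T$-weights occurring in $R_Y$ is a group. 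It remains to identify this weight set with $\fX_{G\cdot Y}$, i.e.\ to show that a character $\chi=(d_s)_s$ occurs in $R_Y$ iff the monomial $\prod_{s} x_s^{d_s}\in\CC[T]\subseteq\CC[G]$ itself lies in $R_Y$: the ``if'' direction is immediate, while for ``only if'' one uses that $R_Y$ is generated by the weight vectors $\mu_{G\cdot Y}^{*}(f_{k,l})$, that every weight-$\chi$ element of $\CC[G]$ has the form (element of $\CC[U]$)$\cdot\prod_{s} x_s^{d_s}$ since $U$ is the unipotent radical, and that $R_Y$ is the coordinate ring of (the closure of) a $G$-orbit, hence stable under enough operations to force the pure torus monomial into $R_Y$ once any weight-$\chi$ element is present. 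Granting this identification, Popov's theorem yields the claim. The main obstacle is precisely this last step --- pinning down which $T$-action and which notion of weight semigroup Popov uses and unwinding it to the concrete condition ``$\prod_{s} x_s^{d_s}\in R_Y$'', together with the disconnected-$T$ case; the solvable/connected hypotheses and the computation of $\mu_{G\cdot Y}^{*}$ are routine.
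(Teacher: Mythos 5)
The paper gives no proof of this statement---it is a direct citation of Popov's Theorem~4, with the connectedness issue deferred to the Remark that follows the theorem---so your proposal, which verifies the hypotheses (solvable, connected after passing to $G^{\circ}=T^{\circ}U$) and specialises Popov's criterion, is essentially the same approach. Two minor caveats: the step you flag as ``the real work'' is in fact definitional, since Popov's weight semigroup is already formulated as membership of the characters $\prod_{s} x_s^{d_s}$ (viewed as functions on $G$ via $G\to G/U\cong T$) in $\mu_{G\cdot Y}^{*}\big(\CC[\CC^{m\times n}]\big)=R_Y$, so no weight-space decomposition argument is needed; and your claim that $U$ is a vector group is false in general (products of unipotent elements of $A(\Gcal,c)$ mix coordinates along paths), though immaterial since Popov only needs $U$ unipotent, hence connected.
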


\begin{remark}
The group $G = A(\Gcal,c)_{\SL}$ may not be connected as required in \cite[Theorem~4]{popov1989closed}. However, the orbit $G \cdot Y$ is Zariski-closed if and only if $G^{\circ} \cdot Y$ is Zariski-closed, where $G^\circ$ is the identity component of $G$. Thus, after restricting to $G^\circ = T^\circ U$ we may assume that $G$ is connected. Restricting to $T^\circ$ amounts to restricting to the torsion-free part of $\fX(T)$: if $\alpha$ is the greatest common divisor of all $\alpha_s, s \in c(I)$, then $T^\circ \cong \big\lbrace (g_s)_{s \in c(I)} \mid \prod_s g_s^{\alpha_s / \alpha} = 1 \big\rbrace$ and $\, \fX(T^{\circ}) = \ZZ^{|c(I)|} / \big( \ZZ \cdot (\alpha_s/\alpha)_{s \in c(I)} \big)$.
\end{remark}

\begin{proof}[Second Proof of Lemma~\ref{lem:SemistablePolystable}]
The matrix $Y$ is semistable
by Proposition~\ref{prop:GeneralStabilityVsMLE}(b) and Theorem~\ref{thm:MLestimationLinDependence}(b).
Fix $s \in c(I)$ and let $M_{Y,s}^{\dagger}$ be the Hermitian transpose of $M_{Y,s}$. Since $M_{Y,s}^{(0)} \notin \mathrm{span} \big\lbrace M_{Y,s}^{(1)},\ldots,M_{Y,s}^{(\beta_s)} \big\rbrace$ we have $\ker(M_{Y,s}^\dagger) \subseteq \mathrm{span}\{e_1, \ldots, e_{\beta_s}\} \subseteq \CC^{\beta_s + 1}$. Therefore, $e_0$ is in the orthogonal complement of $\ker(M_{Y,s}^{\dagger})$, i.e. in the image of $M_{Y,s}$, so there is some $z \in \CC^{\alpha_s n}$ with $M_{Y,s}z = e_0$. By construction of the matrix $M_{Y,s}$, the equation
    \[ \begin{pmatrix} x_s & x_{s,1} & x_{s,2} & \cdots & x_{s, \beta_s} \end{pmatrix} \big( M_{Y,s}z \big)
    = \begin{pmatrix} x_s & x_{s,1} & x_{s,2} & \cdots & x_{s, \beta_s} \end{pmatrix} e_0 = x_s\]
shows that $x_s$ is a $\CC$-linear combination of the $\sum_{j=1}^m x_{c(kj)} Y_{j,l}$, where $k \in c^{-1}(s)$ and $l \in [n]$; the coefficients are given by $z \in \CC^{\alpha_s n}$. In particular, $x_s \in R_Y$.
Since the coordinate functions $x_s$, $s \in c(I)$ generate the character group $\fX(T)$ (thinking of characters as algebraic group morphisms $G \to \CC^{\times}$), we conclude $\fX_{G \cdot Y} = \fX(T)$. Hence, $\fX_{G \cdot Y}$ is a group and $G \cdot Y$ is Zariski closed by Popov's criterion, Theorem~\ref{thm:PopovCriterion}.
\end{proof}

\subsection{Bijection between the stabiliser and the set of MLEs}

So far we have given two descriptions of the set of MLEs given $Y$ in an RDAG model. Corollary~\ref{cor:the_MLEs} gives a linear space of possible $\Lambda$, while  Proposition~\ref{prop:second-bijection} gives an additive bijection between the MLEs and the stabiliser.
Here we give an alternative (multiplicative) bijection between the set of MLEs and the stabiliser, when $A(\Gcal,c)$ is a group. This is similar to~\cite[Proposition~3.9]{amendola2020invariant}, which gives a multiplicative surjection between the MLEs and the stabiliser, for a
Gaussian group model on a reductive group.

\begin{proposition}\label{prop:StabiliserMLEsGroup}
Consider the RDAG model on $(\Gcal,c)$ where colouring $c$ is compatible and $A(\Gcal,c)$ is a group. Set $A := A(\Gcal,c)_{\SL}$ and let $Y \in \RR^{m \times n}$ be polystable under $A$. Given an MLE $\lambda a\T a$, where $a \in A$ and $\lambda$ is as in Proposition~\ref{prop:GeneralStabilityVsMLE}, we have a bijection
    \begin{align*}
        \varphi \colon A_Y &\to \{ \text{MLEs given } Y \} \\
        g &\mapsto \lambda g\T \! a\T \! a g.
    \end{align*}
\end{proposition}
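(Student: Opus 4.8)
The plan is to mirror the proof of Proposition~\ref{prop:second-bijection}, with the additive map there replaced by the multiplicative map $g \mapsto \lambda g\T a\T a g$. This makes sense precisely because $A := A(\Gcal,c)_{\SL}$ is a group: it is the subgroup of determinant-one matrices inside the group $A(\Gcal,c)$. As in Lemma~\ref{lem:twoStepRDAG}, write $A = T\cdot U$ with $T$ the diagonal and $U$ the unipotent matrices in $A$. Exactly as in the proof of Proposition~\ref{prop:second-bijection}, polystability of $Y$ forces $A_Y \subseteq U$: for $g\in A_Y$ the identity $M_{Y,s}^{(0)} = g_s M_{Y,s}^{(0)} + \sum_{t\in[\beta_s]} g_{s,t}M_{Y,s}^{(t)}$ together with $M_{Y,s}^{(0)}\notin\mathrm{span}\{M_{Y,s}^{(t)}:t\in[\beta_s]\}$ (Proposition~\ref{prop:RDAGStabilityVsLinDependence}(c)) forces the vertex-colour entry $g_s$ to equal $1$ for every $s$, so $g$ is unipotent and $ag$ has the same diagonal as $a$. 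Well-definedness of $\varphi$ is then immediate: $ag\in A$ and $(ag)\cdot Y = a\cdot(gY) = a\cdot Y$ has minimal norm in $A\cdot Y$, so $\lambda(ag)\T(ag) = \varphi(g)$ is an MLE by Proposition~\ref{prop:GeneralStabilityVsMLE} and Remark~\ref{rem:A-SL-forRDAG}; here $\lambda$ depends only on the (fixed) minimal orbit norm, hence is the same scalar that appears in every MLE.

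For surjectivity, let $\lambda\widetilde a\T\widetilde a$ be an arbitrary MLE, so $\widetilde a\in A$ and $\|\widetilde a\cdot Y\|$ is minimal in $A\cdot Y$ by Proposition~\ref{prop:GeneralStabilityVsMLE}. Applying Lemma~\ref{lem:twoStepRDAG}(c) to the two minimal-norm points $a\cdot Y$ and $\widetilde a\cdot Y$ gives an orthogonal $t\in T$ with $t a\cdot Y = \widetilde a\cdot Y$, equivalently $a\cdot Y = (t^{-1}\widetilde a)\cdot Y$. Set $g := a^{-1}t^{-1}\widetilde a$. All three factors lie in $A$, so $g\in A$, and $g\cdot Y = a^{-1}\cdot\big((t^{-1}\widetilde a)\cdot Y\big) = a^{-1}\cdot(a\cdot Y) = Y$, so $g\in A_Y$. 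Since $ag = t^{-1}\widetilde a$ and $t$ is orthogonal, $\varphi(g) = \lambda(t^{-1}\widetilde a)\T(t^{-1}\widetilde a) = \lambda\widetilde a\T\widetilde a$, the given MLE.

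For injectivity, suppose $\varphi(g) = \varphi(g')$ with $g,g'\in A_Y$. Cancelling $\lambda>0$ gives $(ag)\T(ag) = (ag')\T(ag')$, which is positive definite. The matrices $ag$ and $ag'$ are invertible, upper triangular, and (since $g,g'$ are unipotent) have the same diagonal as $a$. Let $k$ be the diagonal $\pm1$ matrix with $k_{ii} = \operatorname{sign}(a_{ii})$. Then $k(ag)$ and $k(ag')$ are upper triangular with strictly positive diagonal and both square to $(ag)\T(ag)$, so uniqueness of the Cholesky decomposition gives $k(ag) = k(ag')$, whence $ag = ag'$ and $g = g'$.

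The only delicate points are: in surjectivity, getting the direction of the orthogonal conjugation right, so that $ag$ differs from $\widetilde a$ by an orthogonal factor — this forces the choice $g = a^{-1}t^{-1}\widetilde a$ rather than the naive $\widetilde a^{-1}ta$; and in injectivity, absorbing the sign ambiguity of the diagonal of $a$ into the orthogonal matrix $k$ before invoking uniqueness of Cholesky. Everything else is formal, using only that $A$ is a group, that $A_Y$ consists of unipotent matrices, and Lemma~\ref{lem:twoStepRDAG}.
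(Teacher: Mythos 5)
Your proof is correct, and the well-definedness and surjectivity arguments coincide with the paper's: minimality of $\Vert ag\cdot Y\Vert$ for well-definedness, and Lemma~\ref{lem:twoStepRDAG}(c) with $g:=a^{-1}t^{-1}\widetilde{a}$ for surjectivity. The only place you diverge is injectivity: you first show $A_Y\subseteq U$ via polystability, so that $ag$ and $ag'$ share the diagonal of $a$, and then invoke uniqueness of the Cholesky decomposition after normalising signs with $k$ --- this is the mechanism the paper uses for Proposition~\ref{prop:second-bijection}. The paper's own injectivity argument for this proposition instead observes that $h:=ag'g^{-1}a^{-1}$ is orthogonal and upper triangular, hence diagonal, and then kills it using $h\in A_{aY}$ and polystability of $aY$. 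Both routes are short and rest on the same two ingredients (upper-triangularity and polystability forcing trivial diagonal entries in the stabiliser), so nothing is gained or lost either way; your version has the mild advantage of reusing verbatim the Cholesky trick already set up for the additive bijection.
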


\begin{proof}
For $g \in A_Y$ we have $a g \cdot Y = aY$, which is of minimal norm in $A \cdot Y$ as $\lambda a\T a$ is an MLE. Hence, $\varphi(g) = \lambda (ag)\T(ag)$ is another MLE given $Y$, by Proposition~\ref{prop:GeneralStabilityVsMLE}, and we see that $\varphi$ is well-defined.
For surjectivity, let $\lambda \widetilde{a}\T \widetilde{a}$ be another MLE given $Y$. Then $aY$ and $\widetilde{a}Y$ are of minimal norm in $A \cdot Y$, hence there is an orthogonal $t \in T$ with $taY = \widetilde{a} Y$ by Lemma~\ref{lem:twoStepRDAG}(c). We obtain $g := a^{-1} t^{-1} \widetilde{a} \in A_Y$, where we crucially used that $A(\Gcal,c)$ (and hence $A$) is a group. 
The orthogonality of $t$ gives $\varphi(g) = \lambda \widetilde{a}\T \widetilde{a}$.

To prove injectivity, let $g, g' \in A_Y$ be such that $\varphi(g) = \varphi(g')$. The latter implies $g\T a\T ag = {g'}\T a\T a g'$, which shows that $h := a g' g^{-1} a^{-1}$ is orthogonal. Therefore, $h$ is diagonal, because any orthogonal upper triangular matrix is diagonal. Moreover, using $g,g' \in A_Y$ we have $haY = aY$, i.e. $h \in A_{aY}$. Note $Y$ and $aY$ have the same orbit (closure), where we again use that $A$ is a group. Thus, $aY$ is polystable as $Y$ is polystable. Combining this with $h(aY) = aY$ and $h$ diagonal implies $h = \id$. Finally, $\id = h = a g' g^{-1} a^{-1}$ shows $g = g'$.
\end{proof}

\bigskip
\bigskip

\noindent
\footnotesize {\bf Authors' addresses:}

\smallskip

\noindent Visu Makam, Institute for Advanced Study, Princeton, and University of Melbourne, \hfill {\tt visu@umich.edu}.

\noindent  Philipp Reichenbach, Technische Universit\"at Berlin, {\tt reichenbach@tu-berlin.de}.

\noindent  Anna Seigal, Harvard University, {\tt aseigal@math.harvard.edu}.

\end{document}